\documentclass[a4,12pt]{article}%

\usepackage{graphicx}
\DeclareGraphicsRule{.1}{mps}{.1}{}
\DeclareGraphicsRule{.2}{mps}{.2}{}
\DeclareGraphicsRule{.3}{mps}{.3}{}
\DeclareGraphicsRule{.4}{mps}{.4}{}
\DeclareGraphicsRule{.5}{mps}{.5}{}
\DeclareGraphicsRule{.6}{mps}{.6}{}
\DeclareGraphicsRule{.7}{mps}{.7}{}
\DeclareGraphicsRule{.8}{mps}{.8}{}
\DeclareGraphicsRule{.9}{mps}{.9}{}
\usepackage{graphics}

\usepackage{amsmath}
\usepackage{amsfonts}
\usepackage{amssymb}
\usepackage{enumerate}
\usepackage{xcolor}
\usepackage{tikz}
\usepackage{textcomp} 
\definecolor{Green}{rgb}{0.0, 0.5, 0.0}

\definecolor{dgreen}{rgb}{0.0, 0.5, 0.0}
\setlength{\marginparwidth}{0in}
\setlength{\marginparsep}{0in}
\setlength{\oddsidemargin}{0in}
\setlength{\evensidemargin}{0in}
\setlength{\textwidth}{6in}
\setlength{\topmargin}{-0.25in}

\setcounter{MaxMatrixCols}{30}
%TCIDATA{OutputFilter=latex2.dll}
%TCIDATA{Version=4.00.0.2321}
%TCIDATA{CSTFile=LaTeX article (bright).cst}
%TCIDATA{Created=Wed Feb 21 16:23:47 2001}
%TCIDATA{LastRevised=Monday, April 22, 2002 14:57:25}
%TCIDATA{<META NAME="GraphicsSave" CONTENT="32">}
%TCIDATA{<META NAME="DocumentShell" CONTENT="Journal Articles\Standard LaTeX Article">}
\newtheorem{theorem}{Theorem}[section]

\newtheorem{corollary}[theorem]{Corollary}

\newtheorem{definition}[theorem]{Definition}
\newtheorem{example}[theorem]{Example}

\newtheorem{lemma}[theorem]{Lemma}

\newtheorem{proposition}[theorem]{Proposition}

\newtheorem{remark}[theorem]{Remark}

\newenvironment{proof}[1][Proof]{\textbf{#1.} }{\ \rule{0.5em}{0.5em}}

\newcommand{\N}{\mathbb{N}}

\newcommand{\numbercellong}[2]

\usepackage{url}
\usepackage{dirtytalk}
\begin{document}

\title{Winning Criteria for Open Games: A Game-Theoretic Approach to Prefix Codes}

\author{Dean Kraizberg\thanks{School of Mathematical Sciences, Tel Aviv University, deank@mail.tau.ac.il}}
\maketitle

\begin{abstract}
We study two-player games with alternating moves played on infinite trees. Our main focus is on the case where the trees are full (regular) and the winning set is open (with respect to the product topology on the tree). Gale and Stewart showed that in this setting one of the players always has a winning strategy, though it is not known in advance which player. We present simple necessary conditions for the first player to have a winning strategy, and establish an equivalence between winning sets that guarantee a win for the first player and maximal prefix codes. Using this equivalence, we derive a necessary algebraic condition for winning, and exhibit a family of games for which this algebraic condition is in fact equivalent to winning. We introduce the concept of coverings, and show that by covering the tree of the game with an infinite labeled tree corresponding to the free group, we can use \say{game-theoretic tools} to derive a simple trait of maximal prefix codes.
\end{abstract}
\noindent{\em  Keywords:} Gale-Stewart games, Free groups, Schreier graphs, Core graphs, Nielsen-Schreier theory, Prefix codes.

\noindent{\em MSC2020: } 91A44, 20E05, 94A45, 68R15, 05C25.

\section{Introduction}
A Gale–Stewart game is a perfect information game of two players with alternating moves. The game is defined using an alphabet set, denoted $\mathcal{A}$. The two players alternate turns, and each player is aware of all previous moves. On each turn, a player chooses a single element of $\mathcal{A}$ to play, called the action of the player on that turn. This play determines a sequence of actions $\langle a_1,a_2,... \rangle \in \mathcal{A}^{\N}$.

\medskip

Both players are assumed to have full knowledge of the winning set, denoted by
\[
W \subseteq \mathcal{A}^{\mathbb{N}}.
\]
A \emph{strategy} for a player is a choice of an action for each position on the game tree $T = \mathcal{A}^{\leq\N}$.

Player~1 is said to have a \emph{winning strategy} if there exists a strategy $s_1$ for Player~1 such that, for every strategy $s_2$ of Player~2, the infinite sequence generated by the play $x(s_1,s_2)$ belongs to $W$.
\\

Gale and Stewart~\cite{GaleStewartOpenDET} proved that whenever the winning set $W$, is open or closed in $\mathcal{A}^{\N}$, the game is always determined. 
That is, there is a player with a winning strategy. The natural question is whether there is also a criterion to determine which of the players has a winning strategy. 
Interestingly, even in the case of an open winning set, this question remained unanswered.

We focus on the special case of games with open winning sets that may be regarded as minimal in a certain sense. This focus is justified by several basic results, presented in the following section, concerning the lengths of positions in winning sets for which Player 1 has a winning strategy. We further relate our results to a recent theorem establishing a connection between winning sets of small Hausdorff dimension and the existence of a winning strategy for Player 2.

We then establish a substantive connection between maximal prefix codes and winning sets for which Player 1 has a unique winning strategy. This connection yields an equivalent condition for determining whether a given strategy of Player 1 is indeed a winning strategy in a game. Exploiting this idea, we prove several theorems (the most general of them being Theorem~\ref{corollary: using max prefix codes})—treating the binary case and the general case separately for the sake of clarity—that provide necessary and sufficient conditions for the existence of a winning strategy for Player 1 in games with simple winning sets.

Finally, drawing on structural properties of maximal prefix codes, we derive a simple algebraic criterion: if the index of a free subgroup generated (in a specified manner) by positions from an open winning set is infinite, then Player 2 has a winning strategy in the game. We also present a previously established result, which characterizes a family of games in which this condition is both necessary and sufficient.

At section~\ref{section:coverings}, we introduce the notion of coverings and present several fundamental results concerning them, as developed by Martin in his proof that Gale–Stewart games with Borel-measurable winning sets are determined~\cite{MartinCovering}. We employ this concept to cover the game tree by a labeled tree associated with free groups, and we show that this covering allows one to derive a simple result about maximal prefix codes using the tools developed in this paper.

At Section~7 we discuss some extension of the results to games in which the action set of each player is countable.

\medskip

The paper is organized as follows.
Section~\ref{section: perliminaries open sets} presents the definitions and preliminary results concerning Gale–Stewart games with open winning sets. In Subsection~\ref{section: perliminaries open sets}.1, we also state the main results that will be proved later in the paper. The connection between games with open winning sets and maximal prefix codes is developed in Section~\ref{Section: max prefix codes}.

Section~\ref{section:Introduction To Nielsen-Schreier Theory} introduces Nielsen–Schreier theory, while the resulting algebraic conditions are established in Section~\ref{section:alg conditions}. The notion of covering is introduced in Section~\ref{section:coverings}, and finally, further discussion about games played with a countable alphabet are presented in Section~{7}.

\section{Preliminary Results for Games with Open Winning Set}
\label{section: perliminaries open sets}
\subsection{Definitions of Games and Main Results}
In this section we give some definitions and useful results for two-player alternating-move games, as given in Solan \cite{Solan}. First, we define a two-player alternating-move game.
\begin{definition}
    A \emph{tree} over an alphabet $\mathcal{A}$ is a nonempty set $T \subseteq \mathcal{A}^{<\N}$
of finite sequences in $\mathcal{A}$ that satisfy the following properties:

$1.$ If $p \in T$ and $p'$ is a prefix of $p$, then $p' \in T$.

$2.$ For every $p \in T$ there is $p'\in T$ such that $p$ is a strict prefix of $p'$.\\
The elements of $\mathcal{A}$ are called \emph{actions}, and an element of T is called a \emph{position}\footnote{Throughout, positions are denoted by angle brackets $\langle\cdot\rangle$.}. An element of $\mathcal{A}^\N$ such that every prefix is a position in $T$ is called a \emph{play}. The set of plays is called the \emph{boundary set} of $T$, and denoted $[T]$. For $p \in T$, we also denote the corresponding sub-tree by $T_p := \{q\in T : p \text{ is a prefix of } q \}.$

\end{definition}
\begin{definition}
    A \emph{two-player alternating-move game} (over an alphabet $\mathcal{A}$) is a pair $(T, W)$, where $T$ is a tree (over $\mathcal{A}$) and $W \subseteq [T]$ is a \emph{winning set}. The game is played as follows. In each stage $n \in \N$:
    \\
    \\
     $\bullet$ If $n$ is even, Player 1 selects an action $a_n \in \mathcal{A}$ such that $\langle a_0, a_1, . . . , a_{n-1}, a_n\rangle \in T$.
     \\
     \\
    $\bullet$ If $n$ is odd, Player 2 selects an action $a_n \in \mathcal{A}$ such that $\langle a_0, a_1, . . . , a_{n-1}, a_n\rangle \in T$.
    \\
    \\
    Player 1 wins if $\langle a_0, a_1, \dots \rangle \in W$, and Player 2 wins otherwise.
    \end{definition}
The tree $T$ describes all possible positions in the game, as well as the set of moves that is available to the players at each position. The set $W$ describes the winning conditions of Player I, namely, all plays that lead to a win of Player I.
\begin{definition}
    A \emph{strategy} $s_1$ of Player 1 is a function that assigns to every position $p \in T$ of even length (including the empty sequence, which has length 0) an action $a = s_1(p) \in \mathcal{A}$ such that the concatenation $p \ \circ \langle a \rangle \in T$. A \emph{strategy} $s_2$ of Player 2 is a function that assigns to every position $p \in T$ of odd length an action $a = s_2(p) \in \mathcal{A}$ such that $p \ \circ \langle a \rangle \in T$.
\end{definition}
Every pair of strategies $s_1,s_2$ of Player 1 and II induce a play $x(s_1,s_2) \in [T]$. A play $x\in [T]$ is “consistent" with strategy $s_1$ if there is some strategy $s_2$ such that $x = x(s_1,s_2)$. 
\begin{definition}
Let $(T, W)$ be a game. A strategy
$s_1 \in S_1(T)$ is a \emph{winning strategy} for Player 1 in $(T,W)$ if all plays $x \in [T]$ consistent with $s_1$ are in $W$. A strategy $s_2 \in S_2(T)$ is \emph{winning} for Player 2 in $(T,W)$ if all plays $x \in [T]$ consistent with $s_2$ are in $W^c$.
We shall sometimes say that a player with a winning strategy \say{can win} the game.
\end{definition}
A game is said to be \emph{determined} if either Player 1 or Player 2 has a winning strategy.

Recall that the collection $\{ [T_p] : p \in T \}$ forms a basis of open sets for the topological space $[T]$ equipped with the product topology. 

A celebrated result of Gale and Stewart~\cite{GaleStewartOpenDET} shows that in every two-player alternating-move game with open winning sets one player has a winning strategy. This was later improved by Martin (1975) to games with borel measurable winning set\footnote{We note that Gale and Stewart's result was originally proved for games with alphabets that are not necessarily finite, a generality that is crucial for Martin's inductive argument~\cite{MartinCovering}.}.
\begin{theorem}
    Let $(T, W)$ be a game. If the set $W$ is
    open in the product topology of $[T]$, then the game $(T, W)$ is determined.
\end{theorem}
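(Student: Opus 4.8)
The plan is to prove the statement in its contrapositive-driven form: assuming Player~1 has no winning strategy in $(T,W)$, I will construct an explicit winning strategy for Player~2. The engine of the construction is an invariant maintained along the play. Call a position $p \in T$ \emph{safe} (for Player~2) if Player~1 has no winning strategy in the subgame $(T_p, W \cap [T_p])$ rooted at $p$. By the standing assumption, the root $\langle\,\rangle$ is safe. Player~2's strategy will be designed so that every position visited during a consistent play is safe, and the openness of $W$ will then force the resulting play to lie outside $W$.

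First I would verify that safety propagates under Player~1's moves, regardless of what Player~1 does. Suppose $p$ is safe and has even length (so Player~1 is to move), and suppose toward a contradiction that some child $p \circ \langle a\rangle$ were \emph{not} safe, i.e.\ Player~1 had a winning strategy $\sigma$ from $p \circ \langle a\rangle$. Then Player~1 could win from $p$ by first playing $a$ and thereafter following $\sigma$; this contradicts safety of $p$. Hence \emph{every} child of a safe Player~1-position is safe. Next I would show that Player~2 can always preserve safety on her own turns. Suppose $q$ is safe and has odd length (Player~2 to move), and suppose toward a contradiction that \emph{every} child $q \circ \langle b\rangle$ were unsafe, so that for each available action $b$ Player~1 has a winning strategy $\sigma_b$ from $q \circ \langle b\rangle$. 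Assembling these into a single response (``after Player~2 plays $b$, follow $\sigma_b$'') yields a winning strategy for Player~1 from $q$, contradicting safety of $q$. Therefore at least one child of $q$ is safe, and Player~2's strategy is defined to select such a child.

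Combining the two steps, the invariant ``the current position is safe'' holds at the root and is preserved at every stage, so every finite prefix of any play consistent with this Player~2-strategy is a safe position. The crux, and the only place where openness is used, is the final step. Let $x$ be any play consistent with Player~2's strategy and suppose for contradiction that $x \in W$. Since $W$ is open in the product topology, some basic open set $[T_p]$ with $p$ a finite prefix of $x$ satisfies $[T_p] \subseteq W$. But then \emph{every} play through $p$ already lies in $W$, so any strategy of Player~1 wins from $p$; in particular $p$ is not safe. This contradicts the fact that $p$, being a prefix of $x$, is safe. Hence $x \in W^c$, and Player~2's strategy is winning.

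I expect this last step to be the conceptual heart of the argument: it is precisely the asymmetry of open sets---a win for Player~1 is ``declared'' after finitely many moves---that lets the purely negative invariant ``Player~1 is not winning here'' be converted into an actual victory for Player~2. The remaining verifications are routine, and the argument is insensitive to the cardinality of $\mathcal{A}$ (the assembly of the $\sigma_b$ uses only a choice of winning strategy per child), which matches the generality noted in the footnote and needed for Martin's inductive framework.
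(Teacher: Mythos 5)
Your proof is correct: it is precisely the classical Gale--Stewart argument (maintaining the invariant that Player~1 has no winning strategy in the current subgame, and using openness to show the resulting play avoids $W$), which is the proof in the reference the paper cites; the paper itself states this theorem without reproducing a proof. The only cosmetic point is that Player~2's strategy must also be defined (arbitrarily) at unsafe odd-length positions to be a strategy in the formal sense, but since your invariant shows such positions are never reached in consistent play, this does not affect the argument.
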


The following lemma establishes that, whenever Player~1 is guaranteed to win, they can force the play to enter the winning set within finite number of moves:
\begin{lemma} \label{lemma:win in open mean finite win}
    Player 1  has a winning strategy in the game $(T,W)$ if and only if she has a winning strategy in $(T,W_n = \bigcup_{p\in Z , len(p)\leq n}[T_p])$ for some $n\in \N$.
\end{lemma}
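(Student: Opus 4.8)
The plan is to prove the two implications separately, with essentially all the content in the ``only if'' direction; the ``if'' direction is a one-line monotonicity observation. Throughout I would fix an open-set representation $W = \bigcup_{p\in Z}[T_p]$, so that $W_n = \bigcup_{p\in Z,\ \mathrm{len}(p)\le n}[T_p]$ is an increasing family of open subsets with $W_n \subseteq W$ and $\bigcup_{n\in\N} W_n = W$. For the ``if'' direction: if $s_1$ is winning for Player~1 in $(T,W_n)$, then every play consistent with $s_1$ lies in $W_n\subseteq W$, so the very same $s_1$ is winning in $(T,W)$.

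For the ``only if'' direction I would fix a winning strategy $s_1$ for Player~1 in $(T,W)$ and isolate the positions reached under $s_1$ that have not yet certified a win, namely
\[
T^\ast = \{\, p\in T : p \text{ is consistent with } s_1,\ \text{and no prefix of } p \text{ lies in } Z \,\}.
\]
The first routine step is to verify that $T^\ast$ is prefix-closed (a prefix of a position consistent with $s_1$ is again consistent, and it inherits the property that none of its prefixes lies in $Z$). The key structural claim is that $T^\ast$ admits \emph{no} infinite branch: an infinite branch would be a play $x\in[T]$ all of whose prefixes are consistent with $s_1$ and none of which lies in $Z$; the former makes $x$ consistent with $s_1$, while the latter forces $x\notin\bigcup_{p\in Z}[T_p]=W$, contradicting that $s_1$ is winning.

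The main obstacle is converting ``no infinite branch'' into ``bounded depth'', and this is exactly where the finiteness of the alphabet is indispensable. Since $\mathcal{A}$ is finite, $T^\ast$ is finitely branching, so by K\"onig's lemma a finitely branching prefix-closed set with no infinite branch must be finite; I would then set $N = 1 + \max_{p\in T^\ast}\mathrm{len}(p)$ (with the degenerate case $T^\ast=\varnothing$, i.e. $\langle\rangle\in Z$, handled trivially by $W_0=[T]$). I will emphasize that this step genuinely uses $|\mathcal{A}|<\infty$: for a countable alphabet the depth at which Player~1 forces the win may depend unboundedly on Player~2's first move, so the statement can fail there, which is consistent with the separate treatment promised in Section~7.

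Finally I would read off that $s_1$ wins $(T,W_N)$. Given any play $x$ consistent with $s_1$ with prefixes $x_0,x_1,\dots$, finiteness of $T^\ast$ yields a largest $k$ with $x_k\in T^\ast$; the prefix $x_{k+1}$ is then a child of $x_k$ that is consistent with $s_1$ but fails the defining condition of $T^\ast$. Since $x_k\in T^\ast$ already forbids any prefix of $x_k$ from lying in $Z$, the only remaining possibility is $x_{k+1}\in Z$, whence $x\in[T_{x_{k+1}}]$ with $\mathrm{len}(x_{k+1})=k+1\le N$, so $x\in W_N$. As $x$ was an arbitrary play consistent with $s_1$, this exhibits $s_1$ as a winning strategy in $(T,W_N)$ and completes the argument.
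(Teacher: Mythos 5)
Your proof is correct, and it reaches the same conclusion by a recognizably different mechanism than the paper. The paper's proof is a two-line compactness argument: the set $T_{s_1}$ of plays consistent with $s_1$ is compact (Tychonoff), the basic open sets $[T_p]$, $p\in Z$, cover it, so a finite subcover exists and one takes $n$ to be the largest length occurring in it. You instead work combinatorially: you form the tree $T^\ast$ of positions consistent with $s_1$ whose prefixes avoid $Z$, show it has no infinite branch precisely because $s_1$ is winning, and invoke K\"onig's lemma (using finite branching) to conclude $T^\ast$ is finite, which yields the uniform depth bound $N$. The two arguments are equivalent in strength --- K\"onig's lemma is exactly the combinatorial content of compactness of $\mathcal{A}^{\mathbb{N}}$ for finite $\mathcal{A}$ --- but yours buys a little more: it is purely combinatorial (no topology needed), it produces the bound $N$ explicitly as one plus the depth of the ``not-yet-won'' tree, and your closing remark correctly isolates why finiteness of the alphabet is indispensable (for a countable alphabet Player~2's first move can push the winning time unboundedly, so the statement fails, matching the paper's separate treatment of countable alphabets in Section~7). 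The paper's version is shorter; yours is more self-contained and makes the failure mode outside the finite-alphabet setting visible. One small presentational point: like the paper, the statement is representation-dependent (the set $Z$ with $W=\bigcup_{p\in Z}[T_p]$ must be fixed in advance), and you handle this correctly by fixing the representation at the outset.
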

\begin{proof}
    Assume that Player 1  has a winning strategy $s_1 \in S_1(T)$. Notice that $T_{s_1}=\{x(s_1,s_2): s_2 \in S_2(T)\}$ is compact (Tychonoff's theorem), and $W$ is an open covering of it. Thus, there is a finite sub-covering, and in particular there is an $n\in \N$ such that $W_n$ is an open covering of $T_{s_1}$ (which means that $s_1$ is a winning strategy in the game $(T,W_n)$).
\end{proof}

\medskip

We now state some of the main results established in this work. Throughout, we consider the game $(T,W)$, where
\[
T=\mathcal{A}^{\le \mathbb{N}},
\]
and $W$ is an open winning set of the form
\[
W=\bigcup_{p\in Z}[T_p],
\qquad
Z\subseteq T.
\]
Our first result provides a particularly simple sufficient condition for Player~2 to have a winning strategy.

\begin{lemma} \label{Main result: sum condition}
If
\[
\sum_{p \in Z} \frac{1}{|\mathcal{A}|^{\lfloor \mathrm{len}(p)/2 \rfloor}} < 1,
\]
then Player~2 has a winning strategy in $(T,W)$. Moreover, if $|Z| = \infty$, the weaker condition
\[
\sum_{p \in Z} \frac{1}{|\mathcal{A}|^{\lfloor \mathrm{len}(p)/2 \rfloor}} \leq 1
\]
already suffices.
\end{lemma}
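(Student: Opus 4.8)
The plan is to combine the Gale--Stewart determinacy theorem with a probabilistic averaging argument. Since $W$ is open, determinacy guarantees that one of the players wins, so it suffices to show that Player~1 has \emph{no} winning strategy; then Player~2 must have one. The key observation driving the estimate is purely combinatorial: a position $p$ is reached after exactly $\lfloor \mathrm{len}(p)/2\rfloor$ moves of Player~2 (those at odd stages), so the exponent appearing in the hypothesis counts precisely Player~2's decisions en route to $p$. This is what lets a randomizing Player~2 control the probability of being ``caught'' at any $p\in Z$.

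Concretely, I would fix an arbitrary strategy $s_1$ of Player~1 and let Player~2 play the \emph{uniform random strategy}: at each position of odd length she selects an action of $\mathcal{A}$ uniformly and independently. Together with $s_1$ this induces a probability measure $\mu$ on the set of plays $[T]$. For $p\in T$ let $A_p\subseteq[T]$ denote the event that the play passes through $p$, so that the event ``Player~1 wins'' is exactly $\bigcup_{p\in Z}A_p$. Along the path to $p$ Player~1's moves are forced by $s_1$, while Player~2's $\lfloor \mathrm{len}(p)/2\rfloor$ moves each agree with $p$ with probability $1/|\mathcal{A}|$; hence $\mu(A_p)=|\mathcal{A}|^{-\lfloor \mathrm{len}(p)/2\rfloor}$ when $p$ is consistent with $s_1$, and $\mu(A_p)=0$ otherwise, so in every case $\mu(A_p)\le |\mathcal{A}|^{-\lfloor \mathrm{len}(p)/2\rfloor}$. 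Countable subadditivity of $\mu$ (valid since $T$, and hence $Z$, is countable) then yields
\[
\mu\Big(\bigcup_{p\in Z}A_p\Big)\;\le\;\sum_{p\in Z}\mu(A_p)\;\le\;\sum_{p\in Z}\frac{1}{|\mathcal{A}|^{\lfloor \mathrm{len}(p)/2\rfloor}}.
\]
If $s_1$ were a winning strategy, every play consistent with $s_1$ would lie in $W$; but under $\mu$ every play is consistent with $s_1$, forcing $\mu(\bigcup_{p\in Z}A_p)=1$. When the sum is strictly less than $1$ this is already a contradiction, so no $s_1$ is winning and Player~2 wins. This disposes of the first assertion.

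The main obstacle is the borderline case $|Z|=\infty$ with the sum equal to $1$, where the union bound only gives $\mu(\cdot)\le 1$ and no immediate contradiction. Here I would first apply Lemma~\ref{lemma:win in open mean finite win}: a winning strategy for Player~1 would already win the truncated game $(T,W_n)$ for some $n$, i.e. force the play through the finite set $Z_n:=\{p\in Z:\mathrm{len}(p)\le n\}$ (finite because the alphabet is finite). Rerunning the averaging bound for $Z_n$ gives $\mu(\bigcup_{p\in Z_n}A_p)\le \sum_{p\in Z_n}|\mathcal{A}|^{-\lfloor \mathrm{len}(p)/2\rfloor}$, and since $Z$ is infinite while $Z_n$ is finite, a strictly positive tail is omitted, so this partial sum is \emph{strictly} below the full sum, hence strictly below $1$. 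This again contradicts the forced probability $1$, completing the argument. The crux is therefore the compactness reduction of Lemma~\ref{lemma:win in open mean finite win}, which converts the non-strict global hypothesis into a strict inequality on a finite truncation.
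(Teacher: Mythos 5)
Your proof is correct, but it establishes the core estimate by a genuinely different method than the paper. The paper derives the first assertion from Lemma~\ref{lemma: necceserry open condition lengths}, which is proved by induction on $|Z|$ (after invoking Lemma~\ref{lemma:win in open mean finite win} to reduce to finite $Z$): the inductive step passes to the subgames following each reply of Player~2 and sums the resulting inequalities over $a_2\in\mathcal{A}$. You instead fix an arbitrary strategy $s_1$ of Player~1, let Player~2 randomize uniformly and independently, and use countable subadditivity to show that the plays consistent with $s_1$ cannot all lie in $W$ when the sum is below $1$; Gale--Stewart determinacy then hands the win to Player~2 (the paper relies on determinacy at the same point, when passing from the contrapositive of the necessary condition to a winning strategy for Player~2). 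Your treatment of the borderline case $|Z|=\infty$ coincides with the paper's: both use the compactness reduction of Lemma~\ref{lemma:win in open mean finite win} to pass to a finite truncation whose sum is strictly below $1$, and your observations that $Z_n$ is finite (finite alphabet) and that the omitted tail is strictly positive are exactly the points needed. What your route buys: it avoids the induction, handles infinite $Z$ directly in the strict-inequality case, and generalizes immediately to countable alphabets and non-uniform product measures --- indeed, the paper itself adopts precisely this probabilistic argument in Section~7 (Lemma~\ref{countable main result - sums measure}) to extend the result beyond finite alphabets, so your proof is in effect the uniform-measure specialization of that later lemma. What the paper's route buys: the induction is elementary and purely combinatorial, requiring no construction of a measure on $[T]$, and the same inductive scheme is reused to prove the structural Lemma~\ref{Lemma: can take minimal size}, which is needed later for the prefix-code correspondence.
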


We will show that this lemma is, in a sense, parallel to a recent result on the same topic.

\medskip

In Section~\ref{Section: max prefix codes}, we establish an explicit correspondence between open winning sets for which Player~1 has a winning strategy and maximal prefix codes (see Lemma~\ref{lemma:create winning set from max prefix code} and Theorems~\ref{thm:binary-tree-prefix} and~\ref{theorem: max prefix equivalent finite alphabet}). This correspondence is then used to prove Theorem~\ref{corollary: using max prefix codes}, from which the following result follows immediately.

For $p = \langle a_1, \ldots, a_n \rangle \in Z$, define
\[
\hat{p} = \langle a_2, a_4, \ldots, a_{2\lfloor n/2 \rfloor} \rangle,
\]
and let
\[
\hat{Z} = \{ \hat{p} : p \in Z \}.
\]

\begin{theorem}\label{Main result: index property}
Let $F=F(\mathcal{A})$ denote the free group generated by the alphabet $\mathcal{A}$. If the index of the subgroup generated by $\hat{Z}$ is infinite\footnote{We follow standard notation and write $\langle S\rangle$ for the subgroup generated by a set $S\subseteq \mathcal{A}^{\le \mathbb{N}}$.}, that is,
\[
[F:\langle \hat{Z}\rangle]=\infty,
\]
then Player~2 has a winning strategy in the game $(T,W)$.
\end{theorem}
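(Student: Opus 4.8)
The plan is to prove the contrapositive: assuming Player~1 has a winning strategy $s_1\in S_1(T)$, I will show that $[F:\langle \hat Z\rangle]<\infty$, after which the theorem follows from determinacy of open games. The first step is to pass to a finite, combinatorially clean description of the win. By Lemma~\ref{lemma:win in open mean finite win} there is an $n\in\N$ such that $s_1$ already wins against $W_n$, so every play consistent with $s_1$ has a prefix in $Z$ of length at most $n$; discarding from $Z$ every position that strictly extends another, I may assume $Z$ is an antichain in $T$. Because Player~1's moves along any consistent play are dictated by $s_1$, a consistent play is completely determined by the sequence of Player~2's actions, and I will record for each consistent play its \say{first} position $p\in Z$ together with the associated word $\hat p$ of Player~2's moves. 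Let $C=\{\hat p : p\in Z\text{ is a first hit under }s_1\}$.

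The second step is to identify $C$ as a finite maximal prefix code over $\mathcal{A}$, which is exactly the correspondence developed in Section~\ref{Section: max prefix codes} and underlying Theorem~\ref{corollary: using max prefix codes}. Completeness is immediate: every infinite sequence of Player~2 actions induces a consistent play, which by the winning property meets $Z$, so the sequence has a prefix in $C$. Prefix-freeness follows from the \say{first hit} bookkeeping together with the determinism of $s_1$: if $\hat p$ were a proper prefix of $\hat q$, then any Player~2 sequence extending $\hat q$ would already pass through $p$, contradicting that $q$ is a first hit. Finiteness is forced by the length bound $n$, since a complete prefix code over a finite alphabet whose words have bounded length is finite. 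Finally, since $C\subseteq \hat Z$ we have $\langle C\rangle\le \langle \hat Z\rangle$ and hence $[F:\langle \hat Z\rangle]\le [F:\langle C\rangle]$, so it suffices to bound the index of $\langle C\rangle$.

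The heart of the argument—and the step I expect to be the main obstacle—is the purely algebraic claim that a finite maximal prefix code $C$ generates a finite-index subgroup of $F(\mathcal{A})$, which I would prove with the Stallings/Schreier-graph machinery of Sections~\ref{section:Introduction To Nielsen-Schreier Theory} and~\ref{section:coverings}. Form the finite prefix tree $\tau$ whose leaves are the words of $C$, glue every leaf to the root, and fold the resulting labeled graph $\Gamma_0$ to obtain the core graph of $\langle C\rangle$. Completeness of $C$ means that in $\tau$ every internal node has one outgoing edge for each letter of $\mathcal{A}$, and gluing leaves to the root only adds incoming edges, so every vertex of $\Gamma_0$ already carries an outgoing edge of every label. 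The crucial and most delicate point is that this property survives each elementary folding: merging two vertices and identifying parallel edges—including the cascade of folds triggered at the root when leaves sharing a final letter are glued together—never deletes an outgoing label. Hence the fully folded core graph has, at each of its $V$ vertices, exactly one outgoing edge per letter, giving $|\mathcal{A}|\,V$ edges in total, while foldedness forces at most one incoming edge of each label at every vertex; summing in-degrees then forces exactly one incoming edge per letter everywhere. Thus the core graph is a finite covering of the rose on $|\mathcal{A}|$ loops, and by Nielsen--Schreier theory $[F:\langle C\rangle]=V<\infty$, which is what remained to be shown.
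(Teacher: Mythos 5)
Your proof is correct, and in substance it travels the same road as the paper, just with the machinery inlined rather than cited. The paper proves this theorem in one line by specializing its general correspondence (Theorem~\ref{theorem: max prefix equivalent finite alphabet} via Theorem~\ref{corollary: using max prefix codes} and its corollary) to the constant sequence $x=0^{\mathbb{N}}$, for which $C_{0^{\mathbb{N}}}(Z)=\hat{Z}$; your set $C$ of first-hit projections of positions consistent with $s_1$ is exactly that specialization, reproved directly (your prefix-freeness argument via determinism of $s_1$ is the paper's \say{Case 2} in Theorem~\ref{thm:binary-tree-prefix}, and your completeness argument is the paper's maximality argument in Lemma~\ref{lemma:create winning set from max prefix code}). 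Likewise, your Stallings-folding step---full out-degree survives each fold, and then the edge count $\sum_v \deg_{\mathrm{in}}(v)=\sum_v \deg_{\mathrm{out}}(v)=|\mathcal{A}|\,V$ forces full in-degree, so $\mathrm{Core}(H)=\mathrm{Sch}(H)$---is word-for-word the degree-counting argument in the paper's proof of Theorem~\ref{corollary: using max prefix codes}. What your packaging buys is worth noting: by working only with the subset $C\subseteq\hat{Z}$ and using anti-monotonicity of the index ($\langle C\rangle\le\langle\hat{Z}\rangle$ implies $[F:\langle\hat{Z}\rangle]\le[F:\langle C\rangle]$), you avoid the minimal-size hypothesis that the paper's cited theorems carry and that its two-line proof quietly relies on through the standing WLOG assumption; your explicit compactness and antichain reductions via Lemma~\ref{lemma:win in open mean finite win} make the finiteness of $C$ honest rather than assumed. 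Both arguments close with Gale--Stewart determinacy of open games.
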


In fact, from Theorem~\ref{corollary: using max prefix codes} follows even a stronger statement: if Player~1 has a winning strategy in $(T,W)$, then there exists a subset
$Z' \subseteq Z$ of cardinality at most
\[
\max_{p \in Z} \left\lfloor \frac{\mathrm{len}(p)}{2} \right\rfloor \cdot (|\mathcal{A}| - 1) + 1
\]
such that
\[
[F : \langle \hat{Z'} \rangle] < \infty.
\]

\medskip

Finally, in Section~\ref{section:coverings}, we combine the notion of coverings with the results of Section~\ref{Section: max prefix codes}, together with preliminary results on open games, to establish the following results concerning maximal prefix codes.

\begin{theorem} \label{main result: max prefix code ineq}
Let $C$ be a maximal prefix code, and let $x = \langle x_1,x_2,\dots\rangle\in \mathcal{A}^{\mathbb{N}}$. Then
\[
\sum_{c = \langle c_1, \ldots, c_n \rangle \in C}
2^{|\{ i \in [n] : c_i \neq x_i \}|}
\cdot \frac{1}{(2|\mathcal{A}| - 1)^{\mathrm{len}(c)}} = 1.
\]
Moreover, if $C$ is a prefix code that is not maximal, then
\[
\sum_{c = \langle c_1, \ldots, c_n \rangle \in C}
2^{|\{ i \in [n] : c_i \neq x_i \}|}
\cdot \frac{1}{(2|\mathcal{A}| - 1)^{\mathrm{len}(c)}} < 1.
\]
\end{theorem}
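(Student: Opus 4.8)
The plan is to read the weighted sum as the total mass that a natural probability measure on $[T]=\mathcal{A}^{\mathbb{N}}$ assigns to $C$, where the measure is the one obtained by pushing the uniform measure of the free-group covering tree of Section~\ref{section:coverings} down to the game tree. Write $k=|\mathcal{A}|$ and, for the fixed reference sequence $x=\langle x_1,x_2,\dots\rangle$ and a position $c=\langle c_1,\dots,c_n\rangle$, set $d(c)=|\{i\in[n]:c_i\neq x_i\}|$. I would define a branching measure $\mu$ by declaring that, from every position, the play proceeds to the unique child agreeing with $x$ with probability $\tfrac{1}{2k-1}$ and to each of the remaining $k-1$ children with probability $\tfrac{2}{2k-1}$; since $1+2(k-1)=2k-1$, these weights sum to $1$, so $\mu$ is a genuine probability measure. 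Multiplying transition probabilities along $c$ yields
\[
\mu([T_c])=\frac{2^{d(c)}}{(2k-1)^{\mathrm{len}(c)}},
\]
so the left-hand side of the identity is exactly $\sum_{c\in C}\mu([T_c])$. The key observation justifying the exotic weight is that $\mu$ is the image, under the covering map, of the uniform ($\tfrac{1}{2k-1}$-per-child) measure on the $(2k-1)$-ary tree of reduced words of $F(\mathcal{A})$: each ``correct'' step (agreeing with $x$) lifts to a single reduced edge and each ``wrong'' step lifts to two, which is precisely what produces the factor $2^{d(c)}$ and the base $2k-1$.

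\smallskip

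\noindent Because $C$ is a prefix code, the cylinders $\{[T_c]:c\in C\}$ are pairwise disjoint, so
\[
\sum_{c\in C}\mu([T_c])=\mu\!\left(\bigsqcup_{c\in C}[T_c]\right)=1-\mu(U),\qquad U:=\{y\in[T]:\text{no prefix of }y\text{ lies in }C\}.
\]
The whole statement thus reduces to the single assertion that $\mu(U)=0$ when $C$ is maximal and $\mu(U)>0$ when $C$ is not. The set $U$ is closed, being the complement of the open set $\bigcup_{c\in C}[T_c]$, and a short argument identifies $[T_p]\subseteq U$ with the position $p$ being incomparable to every $c\in C$; hence $U$ has nonempty interior if and only if some $p$ can be adjoined to $C$ while preserving the prefix property, i.e. if and only if $C$ is \emph{not} maximal.

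\smallskip

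\noindent The non-maximal case is then immediate and handles the strict inequality: if $C$ is a non-maximal prefix code, pick an adjoinable $p$, so that $[T_p]\subseteq U$; since $\mu([T_p])=2^{d(p)}/(2k-1)^{\mathrm{len}(p)}>0$ we obtain $\mu(U)\ge\mu([T_p])>0$, whence $\sum_{c\in C}\mu([T_c])=1-\mu(U)<1$.

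\smallskip

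\noindent The heart of the proof, and the step I expect to be the main obstacle, is the maximal case: showing $\mu(U)=0$. The delicate point is that maximality forces $U$ to have empty interior but does \emph{not} force it to be empty—over $\{0,1\}$ the code $\{1^{n}0:n\ge 0\}$ is maximal yet $U=\{1^{\infty}\}$—so one cannot simply argue $U=\emptyset$; what must be shown is that the (possibly nonempty) residual boundary $U$ is $\mu$-negligible. I would attack this through the covering: lifting reduces $\mu(U)$ to the uniform measure of the branches of the regular reduced-word tree that avoid the lifted antichain $\pi^{-1}(C)$. To force this to vanish I plan to invoke the game-theoretic tools already developed, namely the correspondence between maximal prefix codes and open winning sets admitting a winning strategy for Player~1 (Theorem~\ref{corollary: using max prefix codes}) together with the finite-reachability Lemma~\ref{lemma:win in open mean finite win}: maximality is exactly the hypothesis letting Player~1 force entry into the covered region, and compactness converts this into a uniform finite bound driving the mass that remains in $U$ at depth $n$ to $0$. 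The crux—and where the free-group structure with its $(2k-1)$-fold branching is indispensable—is to show that maximality transfers to this covering-level reachability rather than merely to the empty-interior property of $U$; everything else is bookkeeping once $\mu(U)=0$ is in hand.
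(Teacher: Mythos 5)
Your reformulation is correct and is essentially the paper's own computation in disguise: the Markov measure $\mu$ with transition weights $\tfrac{1}{2k-1}$ (agree with $x$) and $\tfrac{2}{2k-1}$ (disagree) assigns to $[T_c]$ exactly the mass $2^{|\{i : c_i\neq x_i\}|}(2k-1)^{-\mathrm{len}(c)}$ that the paper obtains by counting the $2^{|\{i : c_i\neq x_i\}|}$ lifts of each position in the covering tree $\mathrm{Sch}(\langle\,\rangle)$ (Proposition~\ref{Theorem: Kraft like inequality with coverings}). Your argument for the second assertion (non-maximal $\Rightarrow$ strict inequality) is complete, and more direct than the paper's.

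The gap is the equality for maximal $C$, which you explicitly defer, and the mechanism you sketch for it has the two cases backwards. If $C$ is \emph{finite} and maximal, then every word of length $\max_{c\in C}\mathrm{len}(c)$ is comparable to some codeword but cannot be a strict prefix of one, hence has a prefix in $C$; so $U=\emptyset$ and the equality follows from your disjointness identity with no game theory at all --- the ``residual boundary'' you worry about does not exist. If $C$ is \emph{infinite}, the route you propose provably fails: $Z(C)$ is then an infinite set with $\sum_{p\in Z(C)}|\mathcal{A}|^{-\lfloor\mathrm{len}(p)/2\rfloor}=\sum_{c\in C}|\mathcal{A}|^{-\mathrm{len}(c)}\le 1$ by Kraft's inequality, so the second assertion of Lemma~\ref{Main result: sum condition} says that \emph{Player~2} wins the associated game; your own example $C=\{1^n0 : n\ge 0\}$ is of this kind (Player~2 survives forever by playing $1$, exactly as in Example~\ref{example: infinite minimal size is win for 2}), and Lemma~\ref{lemma:win in open mean finite win} would otherwise yield a finite subcode meeting every play, which contradicts the prefix property. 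So ``maximality lets Player~1 force entry'' holds only in the finite case, where you do not need it. Worse, for infinite $C$ the equality itself can fail: partition $\mathbb{N}$ into consecutive blocks of lengths $2,3,4,\dots$ and let $C$ consist of the words ending at a block boundary whose final block is all zeros and which contain no earlier all-zero block; this is an infinite maximal prefix code over $\{0,1\}$, yet for every $x$ one has $\mu(U)=\prod_{k\ge 2}(1-p_k)>0$ because $p_k\le (2/3)^k$ is summable, so the sum is $<1$. Hence no bookkeeping can close your crux without invoking finiteness of $C$.

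For comparison, the paper never shows $\mu(U)=0$ directly. It proves only the inequality ``$\ge 1$'' for every $x$, via the covering (Lemma~\ref{lemma:cover by sch}) and the necessary condition Lemma~\ref{lemma: necceserry open condition lengths} applied to the lifted game on the $(2k-1)$-ary tree, and then upgrades ``$\ge$'' to ``$=$'' by double counting: summing the inequality over all $x\in\mathcal{A}^n$ with $n\ge\max_{c\in C}\mathrm{len}(c)$ gives $\sum_{c\in C}|\mathcal{A}|^{-\mathrm{len}(c)}\ge 1$, while Kraft's inequality gives $\le 1$; since each $x$-sum is $\ge 1$ and their average is $1$, each equals $1$. (That step, like Lemma~\ref{lemma:create winning set from max prefix code} on which the inequality rests, silently assumes $C$ finite.) The minimal repair of your proof is therefore not more game theory but less: state the result for finite $C$, note $U=\emptyset$ by maximality, and your measure identity finishes both assertions at once.
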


\subsection{Simple Sufficient Conditions}
The aim of this paper is to develop new methods for characterizing winning sets for which specific Player has a winning strategy, and in particular whether Player 1 possesses a winning strategy in a specified game. For games with open winning sets, we establish a notably simple necessary condition for the existence of such strategy. We further show that this condition sharpens a recent result concerning general winning sets.
\begin{lemma} \label{lemma: necceserry open condition lengths}
    If Player 1 has a winning strategy in the game $(T,W)$ where $T= \mathcal{A}^{\leq \N} , W= \bigcup _ {p \in Z } [T_p] $ is some open winning set. Then $\sum_{p \in Z} \frac{1}{|\mathcal{A}|^{\lfloor {len(p)/2} \rfloor}} \geq 1$.
\end{lemma}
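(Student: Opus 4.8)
The plan is to exploit the combinatorial meaning of the exponent. For a position $p$, the quantity $\lfloor \mathrm{len}(p)/2\rfloor$ is exactly the number of moves Player~2 makes on the way to $p$, since under this paper's convention Player~2 moves precisely at the odd stages $1,3,5,\dots$ (the number of odd integers in $\{0,\dots,\mathrm{len}(p)-1\}$ is $\lfloor \mathrm{len}(p)/2\rfloor$). Thus $1/|\mathcal{A}|^{\lfloor \mathrm{len}(p)/2\rfloor}$ measures the proportion of Player~2's move-sequences that steer the play through $p$ once Player~1's moves are fixed. The inequality is then a Kraft-type counting bound, which I would establish by a counting (equivalently, uniform-randomization) argument run against a fixed winning strategy of Player~1.

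First I would reduce to a finite winning set. By Lemma~\ref{lemma:win in open mean finite win}, if Player~1 wins $(T,W)$ then she wins $(T,W_n)$ for some $n$, where $W_n$ is generated by $Z_n := \{p \in Z : \mathrm{len}(p)\le n\}$. Since $\sum_{p\in Z_n}|\mathcal{A}|^{-\lfloor\mathrm{len}(p)/2\rfloor} \le \sum_{p\in Z}|\mathcal{A}|^{-\lfloor\mathrm{len}(p)/2\rfloor}$, it suffices to prove the bound for the finite family $Z_n$. Hence I may assume every $p\in Z$ satisfies $\mathrm{len}(p)\le N$ for a fixed $N$, and set $M=\lfloor N/2\rfloor$, the maximal number of Player~2 moves relevant to reaching any position of $Z$.

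Next, fix a winning strategy $s_1$ and let $Z_{s_1}\subseteq Z$ be the set of positions consistent with $s_1$ (those along which Player~1's actions agree with $s_1$). For each Player~2 move-sequence $b=(b_1,\dots,b_M)\in\mathcal{A}^M$, playing $b$ against $s_1$ produces a play which, since $s_1$ is winning, has some prefix $p\in Z$; because the play is $s_1$-consistent this prefix lies in $Z_{s_1}$, and because $\mathrm{len}(p)\le N$ it is determined by at most $M$ of Player~2's moves. Hence the hitting sets $\mathrm{Hit}(p):=\{b\in\mathcal{A}^M: b \text{ steers the play through } p\}$, for $p\in Z_{s_1}$, cover all of $\mathcal{A}^M$. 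A direct count gives $|\mathrm{Hit}(p)|=|\mathcal{A}|^{M-k_p}$ with $k_p=\lfloor\mathrm{len}(p)/2\rfloor$, since along $p$ Player~1's moves are forced by $s_1$ while Player~2's first $k_p$ moves are prescribed by $p$ and the remaining $M-k_p$ are free. Summing, $|\mathcal{A}|^M \le \sum_{p\in Z_{s_1}}|\mathrm{Hit}(p)| = |\mathcal{A}|^M\sum_{p\in Z_{s_1}}|\mathcal{A}|^{-k_p}$, and dividing by $|\mathcal{A}|^M$ yields $1\le\sum_{p\in Z_{s_1}}|\mathcal{A}|^{-\lfloor\mathrm{len}(p)/2\rfloor}\le\sum_{p\in Z}|\mathcal{A}|^{-\lfloor\mathrm{len}(p)/2\rfloor}$, as required.

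The only real care needed is the bookkeeping: confirming that $\lfloor\mathrm{len}(p)/2\rfloor$ counts Player~2's moves under the even-stages-for-Player~1 convention, and that the passage to $Z_{s_1}$ both keeps every winning play covered and makes the count $|\mathrm{Hit}(p)|=|\mathcal{A}|^{M-k_p}$ \emph{exact} rather than merely an upper bound. The overlaps among the $\mathrm{Hit}(p)$ (a single $b$ may pass through several positions of $Z_{s_1}$) cause no difficulty, as they only inflate the right-hand side, which is exactly the direction we want. I expect this last point, the passage from covering to the summed inequality, to be the main (though mild) obstacle, and it is handled cleanly by the elementary union bound $\bigl|\bigcup_{p}\mathrm{Hit}(p)\bigr|\le\sum_{p}|\mathrm{Hit}(p)|$.
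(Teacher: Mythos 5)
Your proof is correct, but it takes a genuinely different route from the paper. The paper, after the same reduction to finite $Z$ via Lemma~\ref{lemma:win in open mean finite win}, argues by induction on $|Z|$: it conditions on Player~1's first move and Player~2's reply, applies the induction hypothesis to each subgame $Z_{a_2}=Z\cap T_{\langle a_1,a_2\rangle}$, and sums the resulting inequalities with the factor $1/|\mathcal{A}|$. You instead run a global counting (uniform-randomization) argument against a fixed winning strategy $s_1$: every $b\in\mathcal{A}^M$ is covered by some $\mathrm{Hit}(p)$ with $p\in Z_{s_1}$, each $|\mathrm{Hit}(p)|\le|\mathcal{A}|^{M-k_p}$, and the union bound gives the inequality. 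Your bookkeeping is sound (the floor indeed counts Player~2's moves, and restriction to $s_1$-consistent positions makes the hitting-set count exact, though only the upper bound is actually needed). Interestingly, your argument is essentially the paper's own proof of the \emph{countable-alphabet} generalization (Lemma~\ref{countable main result - sums measure}), specialized to the uniform measure on a finite alphabet: there Player~2 plays i.i.d.\ according to measures $\mu^i$ and the union bound replaces your counting. So your approach buys more than the paper's inductive proof does at this point: it yields the slightly stronger conclusion $\sum_{p\in Z_{s_1}}|\mathcal{A}|^{-\lfloor\mathrm{len}(p)/2\rfloor}\ge 1$ over the consistent positions only, and it generalizes immediately to non-uniform weights and countable alphabets. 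What the paper's induction buys in exchange is a template reused for Lemma~\ref{Lemma: can take minimal size}, where one must \emph{extract} a subset achieving equality, something the union-bound argument does not directly provide.
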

\begin{proof}
    From Lemma~\ref{lemma:win in open mean finite win} We may assume that $Z$ is finite. We prove the claim by induction on \(|Z|\). 

\textbf{Base case (\(|Z|=1\)).}
If \(|Z|=1\), then the assumption that Player~1 has a winning strategy implies that either
\(W=[T_{\langle a\rangle}]\) for some \(a\in\mathcal A\), or \(W=[T]=[T_{\langle\rangle}]\).
In either case, the unique element \(p\in Z\) satisfies \(\lfloor \mathrm{len}(p)/2\rfloor = 0\). Hence
\[
\sum_{p\in Z} \frac{1}{|\mathcal A|^{\lfloor \mathrm{len}(p)/2\rfloor}}
= \frac{1}{|\mathcal A|^{0}} = 1.
\]

\textbf{Inductive step.}
Assume the statement holds for every winning set \(Z'\) with \(|Z'|<n\).
Let \(Z\) be a winning set of size \(n\), and suppose Player~1 has a winning strategy in the game \((T,W)\) with
\[
W = \bigcup_{p\in Z} [T_p], \qquad |Z| = n.
\]

Let \(a_1\in\mathcal A\) be Player~1 first move according to this strategy.
Because the strategy is winning, for every reply \(a_2\in\mathcal A\) of Player~2, Player~1 still has a winning strategy in the subgame beginning at the position \(\langle a_1,a_2\rangle\).
For each such subgame, the set
\[
Z_{a_2} := Z \cap T_{\langle a_1,a_2\rangle}
\]
is strictly smaller than \(Z\).
Hence, by the induction hypothesis,
\[
\sum_{p\in Z_{a_2}}
   \frac{1}{|\mathcal A|^{\lfloor (\mathrm{len}(p)-2)/2 \rfloor}}
   \ge 1
   \qquad \text{for every } a_2\in\mathcal A.
\]

We now sum over all possible replies \(a_2\):
\[
\sum_{p\in Z}
   \frac{1}{|\mathcal A|^{\lfloor \mathrm{len}(p)/2\rfloor}}
   \ge
   \sum_{p\in Z\cap T_{\langle a_1\rangle}}
   \frac{1}{|\mathcal A|^{\lfloor \mathrm{len}(p)/2\rfloor}}
\]
\[
=
\sum_{a_2\in\mathcal A}
   \sum_{p\in Z_{a_2}}
   \frac{1}{|\mathcal A|^{\lfloor \mathrm{len}(p)/2\rfloor}}
=
\sum_{a_2\in\mathcal A}
   \sum_{p\in Z_{a_2}}
   \frac{1}{|\mathcal A|^{\lfloor (\mathrm{len}(p)-2)/2\rfloor}}
   \cdot
   \frac{1}{|\mathcal A|}
\]
\[
\ge_{\text{Ind.\ hyp.}}
\sum_{a_2\in\mathcal A}
   \frac{1}{|\mathcal A|}
= 1.
\]

This completes the induction and the proof.
\end{proof}

\medskip

Lemma~\ref{Main result: sum condition} follows immediately from Lemma~\ref{lemma: necceserry open condition lengths}.

\medskip

\begin{proof}[Proof of Lemma~\ref{Main result: sum condition}]
The first assertion follows directly from Lemma~\ref{lemma: necceserry open condition lengths}. 
If $|Z|=\infty$, we extend the argument using Lemma~\ref{lemma:win in open mean finite win}.

Suppose, toward a contradiction, that Player~1 has a winning strategy. Then there exists a finite subset $Z'\subseteq Z$ such that Player~1 has a winning strategy in the game with winning set
\[
W'=\bigcup_{p'\in Z'} [T_{p'}].
\]
However, for any finite subset $Z'\subseteq Z$, we have
\[
\sum_{p'\in Z'} \frac{1}{2^{\lfloor \mathrm{len}(p')/2 \rfloor}} < 1,
\]
and hence Player~2 has a winning strategy in $(T,W')$, a contradiction. This completes the proof.
\end{proof}

We also observe that if Player~1 has a winning strategy, then there exists an infinite descending sequence of sub-trees, corresponding to a sequence of subgames, on which Player~1 retains a winning strategy. Consequently, Lemma~\ref{lemma: necceserry open condition lengths} admits a natural extension.

\begin{corollary}
Let $T=\mathcal A^{\leq \mathbb N}$ and $W=\bigcup_{p\in Z}[T_p]$ be an open winning set.  
If Player~1 has a winning strategy in the game $(T,W)$, then for every $0 \le n\le \min_{p\in Z}{\mathrm{len(p)}}$, 
\[
\max_{\substack{p^{*}\in T \\ \mathrm{len}(p^{*})=n}}
\;
|\mathcal A|^{\lfloor n/2 \rfloor}
\sum_{p\in Z\cap T_{p^{*}}}
\frac{1}{|\mathcal A|^{\lfloor \mathrm{len}(p)/2 \rfloor}}
\ge 1.
\]
\end{corollary}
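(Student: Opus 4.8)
The plan is to localize Lemma~\ref{lemma: necceserry open condition lengths} to subgames. Since the outer quantifier in the statement is a maximum, it suffices to exhibit, for each admissible $n$, a single position $p^{*}$ of length $n$ at which Player~1 retains a winning strategy, together with the asserted inequality at that $p^{*}$.

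First I would record that a winning strategy is inherited along consistent play: if $s_1$ is a winning strategy for Player~1 in $(T,W)$ and $q$ is any position consistent with $s_1$ (reachable while Player~1 obeys $s_1$), then the restriction of $s_1$ to $T_q$ is a winning strategy in the subgame $(T_q,\,W\cap[T_q])$. Using this I build a chain $\langle\rangle=p^{*}_0\subset p^{*}_1\subset\cdots$ with $\mathrm{len}(p^{*}_k)=k$: at an even length (Player~1 to move) extend by the action prescribed by $s_1$; at an odd length (Player~2 to move) extend by an arbitrary action, which is legitimate precisely because a winning strategy stays winning after \emph{every} reply of Player~2. Each $p^{*}_k$ is consistent with $s_1$, so Player~1 has a winning strategy in every $(T_{p^{*}_k},W\cap[T_{p^{*}_k}])$, and for $n\le\min_{p\in Z}\mathrm{len}(p)$ I set $p^{*}:=p^{*}_n$. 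The hypothesis $n\le\min_{p\in Z}\mathrm{len}(p)$ is exactly what guarantees that no element of $Z$ is a strict prefix of $p^{*}$, so that $W\cap[T_{p^{*}}]=\bigcup_{p\in Z\cap T_{p^{*}}}[T_p]$ and the winning positions are faithfully represented by $Z':=Z\cap T_{p^{*}}$; without it the subgame could already be won strictly above $p^{*}$ while $Z'$ is empty, and the inequality would fail.

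For the local inequality, note that the number of Player~2 moves along the path from $p^{*}$ to any $p\in Z'$ equals $\delta(p):=\lfloor\mathrm{len}(p)/2\rfloor-\lfloor n/2\rfloor$, irrespective of the parity of $n$, so the target bound $|\mathcal A|^{\lfloor n/2\rfloor}\sum_{p\in Z'}|\mathcal A|^{-\lfloor\mathrm{len}(p)/2\rfloor}\ge1$ is equivalent to $\sum_{p\in Z'}|\mathcal A|^{-\delta(p)}\ge1$. When $n$ is even the subgame $(T_{p^{*}},W\cap[T_{p^{*}}])$ is isomorphic to a root game and $\delta(p)=\lfloor(\mathrm{len}(p)-n)/2\rfloor$, so Lemma~\ref{lemma: necceserry open condition lengths} applies verbatim. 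When $n$ is odd the subgame has Player~2 moving first, and the lemma does not apply directly; here I would re-run its induction on $|Z'|$ with the exponent read as the Player~2-move count $\delta(\cdot)$. The one structural input is that, whenever it is Player~2's turn at the current root, a winning strategy for Player~1 forces $Z'$ to meet $T_{p^{*}\circ\langle b\rangle}$ for every action $b\in\mathcal A$ (otherwise Player~2 steers into a branch disjoint from the winning set); hence each branch carries a strictly smaller set of winning positions and contributes a single factor $|\mathcal A|^{-1}$, matching the one extra Player~2 move. Summing the $|\mathcal A|$ branch-inequalities and dividing by $|\mathcal A|$ reproduces the bound at $p^{*}$, uniformly in the parity of $n$ and covering the boundary case $n=\min_{p\in Z}\mathrm{len}(p)$.

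I expect the parity bookkeeping of the second step to be the main obstacle: Lemma~\ref{lemma: necceserry open condition lengths} is phrased for games in which Player~1 moves first, so the odd-length subgames require recognizing that the correct exponent is the count of Player~2's moves, together with the fact that Player~1's winning strategy "fills" every Player~2 branch. Once this is isolated, the base case ($|Z'|=1$, which forces $\delta=0$ and weight $1$) and the inductive step are routine, and taking the maximum over positions of length $n$ completes the proof.
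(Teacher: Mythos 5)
Your proof is correct and follows essentially the same route as the paper, which justifies this corollary only by observing that Player~1 retains a winning strategy along a descending chain of subgames consistent with her strategy, and then applying Lemma~\ref{lemma: necceserry open condition lengths} in those subgames. Your parity bookkeeping (reading the exponent as the count of Player~2's moves, the branch-summing step for odd $n$, and the role of $n\le\min_{p\in Z}\mathrm{len}(p)$) simply makes explicit what the paper leaves as a ``natural extension.''
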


We now observe that games with a general open winning set can be reduced to a simpler, canonical case.

\begin{lemma} \label{Lemma: can take minimal size}
Let $T = \mathcal A^{\leq \mathbb N}$ and let 
\[
W = \bigcup_{p\in Z} [T_p]
\] 
be an open winning set. If Player~1 has a winning strategy in $(T,W)$, then there exists a subset $Z'\subseteq Z$ such that Player~1 also has a winning strategy in $(T,\bigcup_{p\in Z'} [T_p])$, and
\[
\sum_{p\in Z'} \frac{1}{|\mathcal A|^{\lfloor \mathrm{len}(p)/2 \rfloor}} = 1.
\]
\end{lemma}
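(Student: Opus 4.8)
The plan is to produce the set $Z'$ explicitly as the collection of \emph{first-entry positions} of the plays that are consistent with a fixed winning strategy, and then to evaluate the resulting sum by reading each summand as a probability. Fix a winning strategy $s_1$ for Player~1. For a play $x\in[T]$ consistent with $s_1$ we have $x\in W$, so $x$ has at least one prefix lying in $Z$; let $\pi(x)$ be its shortest such prefix (this is well defined, since a play has a unique prefix of each length). Set $Z'=\{\pi(x):x\text{ is consistent with }s_1\}$. Then $Z'\subseteq Z$, and $s_1$ remains winning for the smaller set $\bigcup_{p\in Z'}[T_p]$, because every consistent play $x$ already lies in the cylinder $[T_{\pi(x)}]$.

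The first key step is to verify that $Z'$ is an antichain, i.e.\ a prefix code. Indeed, if $p\subsetneq q$ with $p,q\in Z'$, choose a consistent play $x$ with $\pi(x)=q$; since $p\in Z'\subseteq Z$ is a strictly shorter prefix of $x$ lying in $Z$, this contradicts the minimality defining $\pi(x)=q$. Consequently the cylinders $\{[T_p]:p\in Z'\}$ are pairwise disjoint, and every play consistent with $s_1$ belongs to exactly one of them. This disjointness is precisely what will upgrade the inequality $\ge 1$ of Lemma~\ref{lemma: necceserry open condition lengths} to an equality.

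The main computation is probabilistic. Introduce the probability measure $\mu$ on $[T]$ under which Player~1 plays according to $s_1$ and Player~2 plays each action uniformly and independently at her turns. A position $q$ of length $m$ is reached under $\mu$ only if its even (Player~1) coordinates agree with $s_1$ and Player~2's $\lfloor m/2\rfloor$ independent uniform choices match its odd coordinates; hence $\mu([T_q])=|\mathcal A|^{-\lfloor \mathrm{len}(q)/2\rfloor}$ for every $q$ consistent with $s_1$, and in particular for every $q\in Z'$. Since $\mu$ is supported on plays consistent with $s_1$, all of which lie in $\bigsqcup_{p\in Z'}[T_p]$, we get $\mu\!\left(\bigsqcup_{p\in Z'}[T_p]\right)=1$; by the disjointness established above together with (countable) additivity, $\sum_{p\in Z'}|\mathcal A|^{-\lfloor\mathrm{len}(p)/2\rfloor}=\sum_{p\in Z'}\mu([T_p])=1$, which is the claimed equality.

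I expect the two delicate points to be the following. First, the antichain property of $Z'$ must be argued carefully, as it is exactly what makes the cylinders disjoint and thus yields equality rather than merely $\ge 1$. Second, the measure-theoretic bookkeeping needs attention when $Z'$ is infinite; this can be handled either by invoking countable additivity of $\mu$ directly, or, more elementarily, by first applying Lemma~\ref{lemma:win in open mean finite win} to reduce to a finite winning set $Z$ (whence $Z'$ is finite), so that the final identity is a finite sum and no limiting argument is needed.
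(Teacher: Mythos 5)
Your proposal is correct, but it takes a genuinely different route from the paper. The paper proves this lemma by induction on $|Z|$ (implicitly after reducing to finite $Z$ via Lemma~\ref{lemma:win in open mean finite win}): since Player~1 wins, she has a first move $a_0$ that keeps her winning in every subgame rooted at $\langle a_0,a\rangle$, the induction hypothesis supplies a minimal-size set $Z'_a\subseteq Z\cap T_{\langle a_0,a\rangle}$ contributing $1/|\mathcal A|$ to the sum for each $a\in\mathcal A$, and the union of these sets is the required $Z'$. You instead construct $Z'$ explicitly as the set of shortest prefixes in $Z$ of plays consistent with a fixed winning strategy $s_1$, show it is an antichain, and evaluate the sum by letting Player~2 randomize uniformly and independently: each cylinder $[T_p]$ with $p\in Z'$ then has probability exactly $|\mathcal A|^{-\lfloor \mathrm{len}(p)/2\rfloor}$, the cylinders are pairwise disjoint, and their union contains the measure-one set of consistent plays, forcing the sum to equal $1$. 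Your argument buys several things the paper's does not: it gives a concrete description of $Z'$, it establishes directly that $Z'$ is prefix-free (a fact the paper only records separately, as a remark before Section~\ref{Section: max prefix codes}, for minimal-size winning sets), and via countable additivity it handles infinite $Z$ without the compactness reduction. It is also essentially the same probabilistic technique the paper itself deploys later, in Lemma~\ref{countable main result - sums measure}, for countable alphabets. What the paper's induction buys in exchange is elementarity: it needs no product measure or Kolmogorov-type extension, only the subgame decomposition already used in the proof of Lemma~\ref{lemma: necceserry open condition lengths}, so the two proofs sit side by side with identical machinery.
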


\begin{proof}
The proof is by induction on $|Z|$.  

\textbf{Base case ($|Z|=1$).}  
If $|Z|=1$, then $Z$ contains only a single position of length $1$ or $0$, and the desired property follows immediately.

\textbf{Inductive step.}  
Assume the statement holds for all sets of size less than $|Z|$.  
Since Player~1 has a winning strategy, there exists $a_0\in \mathcal A$ such that she has a winning strategy in the subgames $T_{\langle a_0,a\rangle}$ for all $a\in\mathcal A$.  
By the induction hypothesis, for each $a\in\mathcal A$ there exists $Z_a'\subseteq Z\cap T_{\langle a_0,a\rangle}$ such that Player~1 has a winning strategy in the corresponding game on $T_{\langle a_0,a \rangle }$, and
\[
\sum_{p\in Z_a'} \frac{1}{|\mathcal A|^{\lfloor \mathrm{len}(p)/2 \rfloor}} = \frac{1}{|\mathcal A|}.
\]  
Taking 
\[
Z' := \bigcup_{a\in\mathcal A} Z_a',
\]
we then have
\[
\sum_{p\in Z'} \frac{1}{|\mathcal A|^{\lfloor \mathrm{len}(p)/2 \rfloor}} = \sum_{a\in\mathcal A} \sum_{p\in Z_a'} \frac{1}{|\mathcal A|^{\lfloor \mathrm{len}(p)/2 \rfloor}} = 1,
\]
as desired.
\end{proof}

\medskip

In light of Lemma~\ref{Lemma: can take minimal size}, we make the following definition. It will become crucial in Section~\ref{Section: max prefix codes}.

\begin{definition} \label{definition: minimal size}
An open set 
\[
W = \bigcup_{p\in Z} [T_p] \subseteq \mathcal A^{\mathbb N}
\] 
is said to be \emph{minimal-size} if
\[
\sum_{p\in Z} \frac{1}{|\mathcal A|^{\lfloor \mathrm{len}(p)/2 \rfloor}} = 1.
\]  
Similarly, we say that $Z$ (the set of positions) is \emph{minimal-size} if $\bigcup_{p\in Z}[T_p]$ is minimal-size.
\end{definition}

\begin{remark}
The same argument as in the proof of Lemma~\ref{Main result: sum condition}, shows that given $T=\mathcal A^{\leq \mathbb N}$ and $W=\bigcup_{p\in Z}[T_p]$ an open set such that $Z$ is \emph{infinite}. If $W$ is of minimal-size, then Player~2 has a winning strategy in the game $(T,W)$.
\end{remark}

From now on, we may assume without loss of generality that $W$ is minimal-size, because otherwise Lemma~\ref{Lemma: can take minimal size} provides a minimal-size subset of $Z$ for which the same player has a winning strategy. We further may assume that $Z$ is finite, as otherwise it would mean that Player~2 has a winning strategy. 

\subsection{Redefining Open Winning Set: Hausdorff Dimension Reduction}
In this work, our primary focus is on games with open winning sets. The two (very simple) lemmas presented below allow us to treat open winning sets in a slightly different framework, which will enable us to apply recent necessary conditions involving Hausdorff dimension.

We begin by considering a game $(T,W)$ over a finite alphabet $\mathcal{A}$, where $T=\mathcal{A}^{<\mathbb{N}}$. Suppose that $W$ is open in the product topology. Then there exists a subset $Z\subseteq T$ such that
\[
W=\bigcup_{p\in Z}[T_p].
\]

Since for any $p,q\in T$ we have
\[
[T_p]\cap[T_q]\neq\emptyset
\quad\text{if and only if}\quad
[T_p]\subseteq[T_q]\ \text{or}\ [T_q]\subseteq[T_p],
\]
we may assume without loss of generality that $Z$ is \emph{minimal}, in the sense that no element of $Z$ is a prefix of another. Note, that any set $Z$ of minimal size in the sense of Definition~\ref{definition: minimal size}, where Player~1 can win in the corresponding game, is also minimal in the sense that no position in $Z$ is a prefix of another.

We first show that, without loss of generality, the positions in $Z$ may be assumed to have even length. Throughout, concatenation is denoted by the symbol $\circ$.

\begin{lemma} \label{lemma: can ssume even lengths}
Given an open winning set 
\[
W = \bigcup_{p\in Z} [T_p],
\] 
we have
\[
W = 
\bigcup_{\substack{p\in Z \\ \mathrm{len}(p)\ \text{even}}} [T_p] 
\;\cup\;
\bigcup_{\substack{p\in Z \\ \mathrm{len}(p)\ \text{odd}}} 
\bigcup_{a \in \mathcal{A}} [T_{p \circ \langle a \rangle}].
\]
\end{lemma}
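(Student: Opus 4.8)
The plan is to reduce everything to the elementary observation that, on the full tree $T=\mathcal A^{\le\mathbb N}$, every basic open set splits into the union of its immediate successors. Concretely, for any position $p\in T$ one has
\[
[T_p]=\bigcup_{a\in\mathcal A}[T_{p\circ\langle a\rangle}],
\]
since a play $x$ extends $p$ if and only if it extends $p\circ\langle a\rangle$ for exactly one action $a\in\mathcal A$ (namely the coordinate of $x$ immediately following $p$), and each such extension is a legal position precisely because $T$ is full. Given this identity, the lemma follows by applying it to the odd-length positions of $Z$ while leaving the even-length positions untouched.

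First I would partition $Z$ into its even-length part $Z_{\mathrm{even}}$ and its odd-length part $Z_{\mathrm{odd}}$, so that
\[
W=\bigcup_{p\in Z}[T_p]
 =\Bigl(\bigcup_{p\in Z_{\mathrm{even}}}[T_p]\Bigr)\cup\Bigl(\bigcup_{p\in Z_{\mathrm{odd}}}[T_p]\Bigr).
\]
Then I would rewrite the second union by substituting the successor decomposition for each odd-length $p$, obtaining
\[
\bigcup_{p\in Z_{\mathrm{odd}}}[T_p]
 =\bigcup_{p\in Z_{\mathrm{odd}}}\ \bigcup_{a\in\mathcal A}[T_{p\circ\langle a\rangle}],
\]
which is exactly the odd-length contribution appearing in the statement. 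Combining the two pieces yields the claimed expression for $W$.

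Since the argument is a direct set-theoretic rewriting, there is no genuine obstacle; the only point that must be checked is that each $p\circ\langle a\rangle$ is again a position of $T$, i.e.\ that the decomposition is valid for every action. This is guaranteed by the standing assumption $T=\mathcal A^{\le\mathbb N}$, which makes the tree full so that every action is available at every position. I would note in passing that the new positions $p\circ\langle a\rangle$ arising from odd-length $p$ all have even length, so the right-hand side is written entirely in terms of even-length generators, which is the purpose of the lemma.
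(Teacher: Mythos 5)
Your proof is correct and rests on the same elementary observation as the paper's: a play extends a position $p$ if and only if it extends $p\circ\langle a\rangle$ for some $a\in\mathcal A$, which is valid because $T=\mathcal A^{\le\mathbb N}$ is full. The paper phrases this as a pointwise biconditional ($x\in W$ iff some $p\in Z$ is a prefix of $x$ iff some $p\circ\langle a\rangle$ is), while you phrase it as the set identity $[T_p]=\bigcup_{a\in\mathcal A}[T_{p\circ\langle a\rangle}]$ applied to the odd-length generators; these are the same argument.
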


\begin{proof}
Indeed, for any $x \in [T]$,
\[
x \in W \iff \exists p \in Z \text{ such that } p \text{ is a prefix of } x
\]
\[
\iff \exists p \in Z,\, \exists a \in \mathcal{A} \text{ such that } p \circ \langle a \rangle \text{ is a prefix of } x.
\]
\end{proof}

\medskip

From now on, whenever we consider games with open winning sets, we may assume without loss of generality that all positions in $Z$ are of even length.

\medskip

Next, given an open set, we define an auxiliary game such that the player with a winning strategy is the same in both games.  
Denote the submonoid with respect to concatenation generated by a set $P \subseteq T$ by
\[
P _{\mathrm{concat}} = \{ p_1 \circ p_2 \circ p_3 \circ \cdots : p_i \in P \}.
\]

\begin{lemma} \label{lemma: concat is open}
Assume that every position $p \in Z$ has even length.  
Player~1 can win the game $(T,W)$ if and only if she can win the game $(T,Z_{\mathrm{concat}})$.
\end{lemma}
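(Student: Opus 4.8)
The plan is to prove the two implications separately. The implication ``Player~1 wins $(T,Z_{\mathrm{concat}})\Rightarrow$ Player~1 wins $(T,W)$'' will follow from a set inclusion, while the converse — the substantive direction — will rest on the self-similarity of the full tree $T=\mathcal A^{\le \N}$ at even-length positions together with a ``reset'' argument.

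First I would record the inclusion $Z_{\mathrm{concat}}\subseteq W$, reading $Z_{\mathrm{concat}}$ as the set of plays $x=p_1\circ p_2\circ\cdots$ with every $p_i\in Z$. Indeed, for any such $x$ the first block $p_1\in Z$ is a prefix of $x$, so $x\in[T_{p_1}]\subseteq\bigcup_{p\in Z}[T_p]=W$. Consequently, any strategy that is winning for Player~1 in $(T,Z_{\mathrm{concat}})$ forces every consistent play into $Z_{\mathrm{concat}}\subseteq W$, and is therefore already winning in $(T,W)$. This settles the easy direction.

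For the converse, suppose $s_1$ is winning for Player~1 in $(T,W)$. The key structural fact is that for every position $q$ of \emph{even} length the subgame rooted at $q$ is isomorphic to the whole game: at $q$ it is again Player~1's turn (even positions belong to Player~1), and since $T$ is the full tree the actions available below $q$ are again all of $\mathcal A$, so the translation $r\mapsto q\circ r$ is an isomorphism of game trees. For such $q$ let $s_1^{\,q}$ denote the translate of $s_1$, defined by $s_1^{\,q}(q\circ r)=s_1(r)$. Because $s_1$ is winning in $(T,W)$ and the translated subgame is isomorphic to the original, $s_1^{\,q}$ forces every play through $q$ to reach, after finitely many further moves, a position $q\circ p$ with $p\in Z$; since $Z$ is prefix-free this first $Z$-block is unambiguous. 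I would then define Player~1's strategy $\sigma$ in $(T,Z_{\mathrm{concat}})$ by a checkpoint protocol: maintain the latest \emph{checkpoint} $q_k$, starting from $q_0=\langle\rangle$, and between checkpoints play according to the translate $s_1^{\,q_k}$; the instant the play reaches some $q_{k+1}=q_k\circ p_{k+1}$ with $p_{k+1}\in Z$, declare $q_{k+1}$ the new checkpoint and continue. Formally, for an even-length position $r$ reached under this protocol, let $q_k$ be the longest checkpoint prefix produced so far, write $r=q_k\circ r'$, and set $\sigma(r)=s_1(r')$; on positions never reached under $\sigma$ set $\sigma$ to any legal action. Each checkpoint $q_{k+1}$ again has even length, so the translation remains valid, and since every block $p_{k+1}$ has length at least $2$ the checkpoints strictly increase and exhaust the play, giving $x=p_1\circ p_2\circ\cdots\in Z_{\mathrm{concat}}$.

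The main obstacle is to verify that the checkpoints recur \emph{infinitely often}, i.e.\ that the protocol never stalls. This is exactly where the hypothesis that $s_1$ is winning in $(T,W)$ is used together with the self-similarity above: from each even-length checkpoint the translated strategy $s_1^{\,q_k}$ again guarantees reaching a fresh $Z$-block in finitely many moves, and one may invoke Lemma~\ref{lemma:win in open mean finite win} to make each phase terminate within a uniformly bounded number of moves. A secondary point is to confirm that $\sigma$ is a well-defined total strategy; this holds because along consistent plays the decomposition into $Z$-blocks is uniquely determined, using that all positions in $Z$ have even length and that $Z$ is prefix-free. Combining the two directions yields the claimed equivalence.
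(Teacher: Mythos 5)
Your proof is correct and follows essentially the same route as the paper: the easy direction via the inclusion $Z_{\mathrm{concat}}\subseteq W$, and the converse via restarting Player~1's winning strategy at each even-length position where a $Z$-block is completed (your checkpoint protocol is a careful formalization of the paper's one-line ``repeat the strategy'' argument). The appeal to Lemma~\ref{lemma:win in open mean finite win} for a uniform bound is unnecessary --- each phase already terminates in finitely many moves because every play consistent with the translated strategy lies in $W$ and hence has a finite prefix in $Z$ --- but this is harmless.
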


\begin{proof}
Since $Z_{\mathrm{concat}} \subseteq W$, the first direction is immediate.  

Conversely, because all positions in $Z$ have even length, Player~1 can repeat her winning strategy from the game $(T,W)$ every time she reaches a position in $Z$, thereby guaranteeing a win in $(T,Z_{\mathrm{concat}})$.  
\end{proof}

\medskip

By a slight abuse of notation, we will write $Z_{\mathrm{concat}}$ even when the positions in $Z$ are not of even length.  
This indicates that we first apply Lemma~\ref{lemma: can ssume even lengths} to transform $Z$ into a set of even-length positions and then take the submonoid generated by the resulting set.

A recent result of Bella{\"i}che and Rosenzweig \cite{DETwinnerHDIM} establishes an additional necessary condition for Player~1 to possess a winning strategy\footnote{This holds in a much broader setting than the open case, and in particular for general winning sets and infinite alphabet.}.

\begin{theorem}
\label{theorem: hausdorf 1/2}
If Player~1 has a winning strategy in the game $(T,W)$, where $T=\mathcal A^{\leq \mathbb N}$, then the Hausdorff dimension of $W$ satisfies
\[
\dim_H(W) \ge \tfrac12 .
\]
\end{theorem}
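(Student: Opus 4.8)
The statement is about an arbitrary winning set (the footnote makes clear it is not specific to the open case), so the open-set machinery developed above is not the natural tool here: for open $W$ the conclusion is immediate, since $W$ then contains a full cylinder $[T_p]$, which is bi-Lipschitz to all of $[T]$ and hence has dimension $1$. The plan for the general statement is instead to argue directly from a winning strategy by a mass-distribution (Frostman-type) estimate. First I would fix the ultrametric $d(x,y)=|\mathcal{A}|^{-N(x,y)}$ on $[T]=\mathcal{A}^{\mathbb N}$, where $N(x,y)$ is the length of the longest common prefix; then the cylinders $[T_p]$ are exactly the metric balls, a cylinder over a position of length $n$ has diameter $|\mathcal{A}|^{-n}$, and this normalization gives $\dim_H([T])=1$ (which is what makes the constant $\tfrac12$ meaningful). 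Fixing a winning strategy $s_1$ for Player~1, I set $T_{s_1}=\{x(s_1,s_2):s_2\in S_2(T)\}$. By the definition of a winning strategy, $T_{s_1}\subseteq W$, so $\dim_H(W)\ge\dim_H(T_{s_1})$ and it suffices to prove $\dim_H(T_{s_1})\ge\tfrac12$.

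The guiding observation is that a play consistent with $s_1$ is completely determined by Player~2's moves: if $u=\langle u_0,u_1,\dots\rangle\in\mathcal{A}^{\mathbb N}$ records Player~2's choices at her turns (the odd stages), then the even coordinates are forced by $s_1$, yielding a continuous injection $\phi\colon\mathcal{A}^{\mathbb N}\to T_{s_1}$. I would then push the uniform Bernoulli measure $\nu$ on $\mathcal{A}^{\mathbb N}$ forward to $\mu:=\phi_*\nu$, a Borel probability measure supported on $T_{s_1}$. The key estimate is the following counting fact: fixing the first $n$ coordinates of a play pins down only the Player~2 moves sitting at the odd indices below $n$, of which there are exactly $\lfloor n/2\rfloor$, while the even coordinates are not free but determined by $s_1$. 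Hence for any position $w$ of length $n$ one has $\mu([T_w])\le|\mathcal{A}|^{-\lfloor n/2\rfloor}$ (with equality when $w$ is consistent with $s_1$, and $\mu([T_w])=0$ otherwise). This is precisely where the exponent $\tfrac12$ is produced: Player~2 governs only half of the coordinates.

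Converting to balls, a ball of radius $r$ with $|\mathcal{A}|^{-n}\le r<|\mathcal{A}|^{-(n-1)}$ is a length-$n$ cylinder, so
\[
\mu\bigl(B(x,r)\bigr)\le|\mathcal{A}|^{-\lfloor n/2\rfloor}\le|\mathcal{A}|^{-(n-1)/2}\le|\mathcal{A}|^{1/2}\,r^{1/2}.
\]
By the mass distribution principle this gives $\mathcal{H}^{1/2}(T_{s_1})\ge\mu(T_{s_1})\,|\mathcal{A}|^{-1/2}>0$, whence $\dim_H(T_{s_1})\ge\tfrac12$ and therefore $\dim_H(W)\ge\tfrac12$, as claimed.

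The main obstacle is the measure estimate of the second paragraph: everything hinges on the observation that constraining $n$ coordinates frees only $\lfloor n/2\rfloor$ of Player~2's choices, and I expect the bookkeeping with the floor function and with the consistency-versus-measure-zero dichotomy for $[T_w]$ to be the place where genuine care is needed. Two caveats I would flag explicitly: the bound is stated for the normalization $\dim_H([T])=1$ fixed by the ultrametric above, which is why the constant is the dimension-independent $\tfrac12$; and for a countably infinite alphabet one must first choose a summable metric and re-examine this normalization before the same argument applies. Finally, I would remark that the very same parametrization $\phi$ yields the matching upper bound $\dim_H(T_{s_1})\le\tfrac12$ from the natural cover, so the constant $\tfrac12$ is optimal and is attained whenever $W=T_{s_1}$.
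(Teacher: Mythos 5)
Your proposal is correct, but note that the paper never actually proves Theorem~\ref{theorem: hausdorf 1/2}: it is quoted as a black box from \cite{DETwinnerHDIM}, and the paper's own contribution around it is the complementary Lemma~\ref{lemma: Housdorf and sum 1/2}, which shows that for \emph{open} winning sets the theorem (applied to $Z_{\mathrm{concat}}$ via Lemma~\ref{lemma: concat is open}) is equivalent to the elementary counting condition of Lemma~\ref{lemma: necceserry open condition lengths}. So yours is a genuinely different route: a self-contained, Frostman-type proof of the cited result. The argument checks out --- the parametrization $\phi$ of $T_{s_1}$ by Player~2's moves, the estimate $\mu([T_w])\le|\mathcal A|^{-\lfloor n/2\rfloor}$ for length-$n$ cylinders (with value $0$ on cylinders inconsistent with $s_1$, since every point of the image of $\phi$ is consistent), the passage from cylinders to balls in the ultrametric (where the two notions coincide), and the mass distribution principle are all applied correctly, and $T_{s_1}\subseteq W$ gives the conclusion by monotonicity of Hausdorff dimension. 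What your route buys: it covers arbitrary winning sets, which is the true scope of the cited theorem (per the paper's footnote), and it avoids the external variational principle (Theorem~\ref{theorem Fishman game housdorf}) on which Lemma~\ref{lemma: Housdorf and sum 1/2} rests. What the paper's route buys in exchange: an equivalence rather than a one-sided bound --- for open sets, $\dim_H(Z_{\mathrm{concat}})<\tfrac12$ holds if and only if $\sum_{p\in Z}|\mathcal A|^{-\mathrm{len}(p)/2}<1$, so the dimension condition can also certify wins for Player~2, which your lower-bound argument alone cannot do. Two minor caveats: your normalization (and the statement itself) implicitly requires $|\mathcal A|\ge 2$, and your closing remark that the same parametrization gives the matching upper bound $\dim_H(T_{s_1})\le\tfrac12$ is correct and consistent with Example~\ref{example: infinite minimal size is win for 2}, where this value is attained.
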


At first glance, applying this condition to an \emph{open} winning set may appear redundant, since every open set in the product topology is known to have Hausdorff dimension~$1$.  
However, this obstacle is avoided by instead considering the equivalent winning set defined in Lemma~\ref{lemma: concat is open}, which we denote by $Z_{\mathrm{concat}}$.  
We will show that Theorem~\ref{theorem: hausdorf 1/2} is in fact a consequence of Lemma~\ref{lemma: necceserry open condition lengths} when restricted to open winning sets.  
Moreover, in this special case, the two results are equivalent.

\begin{lemma} \label{lemma: Housdorf and sum 1/2}
Let $T=\mathcal A^{\leq \mathbb N}$ and let $W=\bigcup_{p\in Z}[T_p]$ be an open set, with $Z \subseteq T$ being a finite collection of even positions.  
Then $\dim_H(Z_{\mathrm{concat}})<\tfrac12$, if and only if
\[
\sum_{p\in Z} \frac{1}{|\mathcal A|^{ \mathrm{len}(p)/2 }} < 1.
\]
\end{lemma}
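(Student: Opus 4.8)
The plan is to identify $Z_{\mathrm{concat}}$ as a self-similar set whose Hausdorff dimension is given by the classical Moran equation, and then to read off the equivalence from the monotonicity of a single exponential sum. First I would fix the standard metric on $[T]=\mathcal A^{\N}$, setting $d(x,y)=|\mathcal A|^{-k}$ where $k$ is the length of the longest common prefix of $x$ and $y$; in this metric $\dim_H([T])=1$ and each cylinder $[T_p]$ has diameter $|\mathcal A|^{-\mathrm{len}(p)}$. Since (as recorded just before Lemma~\ref{lemma: can ssume even lengths}) we may assume $Z$ contains no position that is a prefix of another, the maps $f_p\colon x\mapsto p\circ x$ for $p\in Z$ are contracting similarities of ratio $r_p=|\mathcal A|^{-\mathrm{len}(p)}$, and their images $f_p([T])=[T_p]$ are pairwise disjoint subsets of $[T]$. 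Every element of $Z_{\mathrm{concat}}$ factors uniquely as $p_1\circ(p_2\circ p_3\circ\cdots)$ with $p_1\in Z$, so $Z_{\mathrm{concat}}=\bigcup_{p\in Z}f_p(Z_{\mathrm{concat}})$; thus $Z_{\mathrm{concat}}$ is precisely the attractor of the finite iterated function system $\{f_p\}_{p\in Z}$, and the disjointness of the $[T_p]$ gives the strong separation (hence open set) condition.

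Next I would invoke Moran's theorem for self-similar sets under the open set condition: the dimension $s^{*}:=\dim_H(Z_{\mathrm{concat}})$ is the unique solution of $\sum_{p\in Z} r_p^{\,s}=1$, that is of $\Phi(s)=1$, where $\Phi(s):=\sum_{p\in Z}|\mathcal A|^{-s\,\mathrm{len}(p)}$. Here the finiteness of $Z$ is exactly what makes the system finite and the theorem applicable. Since $|\mathcal A|\ge 2$ and every $p\in Z$ has positive length (the case $\langle\rangle\in Z$, where $W=[T]$, being trivial), $\Phi$ is continuous and strictly decreasing with $\Phi(s^{*})=1$. Consequently $\sum_{p\in Z}|\mathcal A|^{-\mathrm{len}(p)/2}=\Phi(1/2)<1$ holds if and only if $1/2>s^{*}$, i.e. if and only if $\dim_H(Z_{\mathrm{concat}})<1/2$, which is the claim. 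For the direction $\dim_H<1/2\Rightarrow\Phi(1/2)<1$ I would convert the strict inequality $s^{*}<1/2$ into $\Phi(1/2)<\Phi(s^{*})=1$ by monotonicity; for the converse, continuity yields some $s<1/2$ with $\Phi(s)<1$, and the easy covering bound below then gives $\dim_H\le s<1/2$.

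The routine half of Moran's theorem is the upper bound: covering $Z_{\mathrm{concat}}$ by the $|Z|^{k}$ cylinders $[T_{p_1\circ\cdots\circ p_k}]$ yields an $\mathcal H^{s}$-sum equal to $\Phi(s)^{k}\to 0$ whenever $\Phi(s)<1$, the diameters tending to $0$ because each factor has length at least $\min_{p\in Z}\mathrm{len}(p)\ge 1$. The step I expect to be the real obstacle is the matching lower bound $\dim_H(Z_{\mathrm{concat}})\ge s^{*}$: this is where I would place the natural self-similar probability measure assigning mass $\prod_i |\mathcal A|^{-s^{*}\mathrm{len}(p_i)}$ to the cylinder of $p_1\circ\cdots\circ p_k$ (a genuine measure precisely because $\Phi(s^{*})=1$) and apply the mass distribution principle, estimating $\mu(B(x,r))\lesssim r^{s^{*}}$. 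It is exactly here that the prefix-code/strong-separation property is essential, since it guarantees that a ball of radius $r$ meets only a controlled number of cylinders of comparable size; without it the concatenation factorisation would not be unique and the measure estimate would break down. Citing Moran's theorem wholesale sidesteps this computation, but the separation hypothesis it requires is the genuine content, supplied here by the minimality of $Z$.
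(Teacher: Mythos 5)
Your proposal is correct, but it proves the hard direction by a genuinely different route than the paper. The easy direction is the same in both: you and the paper cover $Z_{\mathrm{concat}}$ by the cylinders $[T_{p_1\circ\cdots\circ p_n}]$, $p_i\in Z$, and let the geometric decay $\Phi(1/2-\epsilon)^n\to 0$ kill the Hausdorff measure. The difference is in showing $\dim_H(Z_{\mathrm{concat}})<\tfrac12 \Rightarrow \Phi(1/2)<1$. The paper never computes $\dim_H(Z_{\mathrm{concat}})$: it expands each $p\in Z$ into a family $\operatorname{Extend}_n(p)$ of words of uniform length $2n$ (padding Player~1's coordinates with a fixed letter), transfers covers to show $\dim_H(Z_{\mathrm{concat}})<\tfrac12$ forces $\dim_H(W_n)<\tfrac12$ for the concatenation set of the expanded code, and then invokes Theorem~\ref{theorem Fishman game housdorf} (the corollary of the Das--Fishman--Simmons--Urba\'nski dimension game~\cite{DasFishmanHousdorfGame}) on the uniform-length code $A$ to get $|A|<|\mathcal{A}|^n$, which is exactly the desired sum inequality. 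You instead identify $Z_{\mathrm{concat}}$ as the attractor of the IFS $\{x\mapsto p\circ x\}_{p\in Z}$, note that prefix-freeness of $Z$ gives strong separation, and obtain the exact dimension $\dim_H(Z_{\mathrm{concat}})=s^{*}$ with $\sum_{p\in Z}|\mathcal{A}|^{-s^{*}\mathrm{len}(p)}=1$, the lower bound coming from the self-similar measure and the mass distribution principle; your sketch of that step is sound, and in this ultrametric it is even cleaner than you suggest, since every ball is a cylinder $[T_w]$ and all points of $Z_{\mathrm{concat}}\cap[T_w]$ share the same initial parsed blocks $p_1,\dots,p_k$ with $\mathrm{len}(p_1\circ\cdots\circ p_k)\ge \mathrm{len}(w)-\max_{p\in Z}\mathrm{len}(p)$, giving $\mu([T_w])\le C\,(\operatorname{diam}[T_w])^{s^{*}}$ directly. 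Your route buys a stronger and more informative conclusion (the exact Moran dimension, from which both directions follow by monotonicity of $\Phi$) and is self-contained, needing neither the $\operatorname{Extend}_n$ construction nor Theorem~\ref{theorem Fishman game housdorf}; the paper's route buys economy within its own framework, since it reuses the dimension-game machinery that the paper must state anyway for the comparison with~\cite{DETwinnerHDIM}, and it never has to establish a Hausdorff-dimension lower bound by hand. Two shared caveats: both arguments rely on the standing assumption that no element of $Z$ is a prefix of another (you for disjointness of the $[T_p]$ and uniqueness of parsing, the paper for the count $|A|=\sum_{p\in Z}|\mathcal{A}|^{n-\mathrm{len}(p)/2}$), and both set aside the degenerate case $\langle\rangle\in Z$, where $Z_{\mathrm{concat}}$ collapses and the stated equivalence does not literally hold.
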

Before giving the proof, we state a result that will be used below. This statement is a corollary of the Hausdorff dimension game introduced by Das, Fishman, Simmons, and Urbański~\cite{DasFishmanHousdorfGame}; see also~\cite{DETwinnerHDIM} for an exposition in the dyadic setting.

\begin{theorem}\label{theorem Fishman game housdorf}
Let \(T=\mathcal{A}^{\le \mathbb{N}}\) and let \(S\subseteq [T]\) be a Borel set. 
If there exist integers \(k\ge 1\) and a set \(A\subseteq \mathcal{A}^k\) with \(|A|\ge |\mathcal{A}|^{k d}\) and \(A_{\mathrm{concat}} \subseteq S\),
then \(\dim_H(S)\ge d\).
\end{theorem}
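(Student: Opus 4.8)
The plan is to identify $Z_{\mathrm{concat}}$ as a self-similar-type set generated by concatenating the finitely many words of $Z$, and to read off its Hausdorff dimension from the ``pressure'' function
\[
\Phi(s) \;=\; \sum_{p\in Z} |\mathcal A|^{-s\,\mathrm{len}(p)}
\;=\; g\!\left(|\mathcal A|^{-s}\right),
\qquad
g(z):=\sum_{p\in Z} z^{\,\mathrm{len}(p)} .
\]
Here $\Phi$ is continuous and strictly decreasing in $s$ (each $\mathrm{len}(p)\ge 2$, since the positions are nonempty and even; the degenerate case $\langle\rangle\in Z$ forces $W=[T]$ and is immediate), and the quantity in the lemma is exactly $\Phi(1/2)=g(q)$ with $q:=|\mathcal A|^{-1/2}$. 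By the standing reduction I may assume $Z$ is an antichain, i.e.\ a prefix code, hence uniquely decodable; so for each $k$ the set
\[
A_k \;=\; \{\, p_{1}\circ\cdots\circ p_{m}\in\mathcal A^{k} : m\ge 1,\ p_{i}\in Z,\ \textstyle\sum_i \mathrm{len}(p_i)=k \,\}
\]
has cardinality $c_k$, the number of ordered $Z$-factorizations of total length $k$, and $(A_k)_{\mathrm{concat}}\subseteq Z_{\mathrm{concat}}$ by construction.

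For the direction $g(q)<1 \Rightarrow \dim_H(Z_{\mathrm{concat}})<\tfrac12$ I would argue by an explicit cover. Since $\Phi$ is continuous and $\Phi(1/2)=g(q)<1$, I pick $s_0<\tfrac12$ with $\Phi(s_0)<1$. Covering $Z_{\mathrm{concat}}$ by the cylinders $[T_{p_{1}\circ\cdots\circ p_{m}}]$ over all $m$-fold factorizations, whose diameters are $|\mathcal A|^{-(\mathrm{len}(p_1)+\cdots+\mathrm{len}(p_m))}$, the associated $s_0$-sum telescopes to
\[
\sum_{(p_1,\dots,p_m)\in Z^{m}} |\mathcal A|^{-s_0\sum_i \mathrm{len}(p_i)} \;=\; \Phi(s_0)^{m}\ \xrightarrow[m\to\infty]{}\ 0 ,
\]
so $\mathcal H^{s_0}(Z_{\mathrm{concat}})=0$ and hence $\dim_H(Z_{\mathrm{concat}})\le s_0<\tfrac12$. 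This half uses nothing beyond continuity of $\Phi$ and the definition of Hausdorff measure, and in fact does not require the prefix property.

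For the converse I would prove the contrapositive $g(q)\ge 1 \Rightarrow \dim_H(Z_{\mathrm{concat}})\ge\tfrac12$ using the game-theoretic lower bound of Theorem~\ref{theorem Fishman game housdorf}. The key step is to convert the mass inequality $g(q)\ge1$ into genuine exponential counting. Expanding the generating function gives $\sum_{k\ge0} c_k q^{k}=\sum_{m\ge0} g(q)^{m}$, and since $g(q)\ge1$ every summand is $\ge1$, so the series diverges. By the root test this forces $\limsup_k c_k^{1/k}\ge 1/q=|\mathcal A|^{1/2}$. Consequently, for every $\delta>0$ there are infinitely many $k$ with $|A_k|=c_k\ge |\mathcal A|^{k(1/2-\delta)}$; applying Theorem~\ref{theorem Fishman game housdorf} to $A_k\subseteq\mathcal A^{k}$ with $d=\tfrac12-\delta$ (legitimate since $(A_k)_{\mathrm{concat}}\subseteq Z_{\mathrm{concat}}$) yields $\dim_H(Z_{\mathrm{concat}})\ge\tfrac12-\delta$, and letting $\delta\to0$ gives $\dim_H(Z_{\mathrm{concat}})\ge\tfrac12$.

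The main obstacle is this converse. Divergence of $\sum_k c_k q^{k}$ by itself does \emph{not} produce a single index $k$ with $c_k\ge|\mathcal A|^{k/2}$ (the terms could individually be small, as for $\sum 1/k$), so the argument genuinely needs the root-test / radius-of-convergence step to extract the growth rate $\limsup_k c_k^{1/k}\ge|\mathcal A|^{1/2}$. A second, subtler point is that I use $|A_k|=c_k$ rather than merely $|A_k|\le c_k$: this equality relies on unique decodability, which is exactly why the reduction to a prefix-code (antichain) $Z$ is invoked—without it distinct factorizations could collide and the lower bound on $|A_k|$ would degrade.
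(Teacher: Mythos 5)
Your proposal does not prove the statement at hand; it proves a different one while assuming the statement. The target is Theorem~\ref{theorem Fishman game housdorf} itself --- that $|A|\ge|\mathcal{A}|^{kd}$ together with $A_{\mathrm{concat}}\subseteq S$ forces $\dim_H(S)\ge d$ --- yet in your converse step you explicitly invoke ``the game-theoretic lower bound of Theorem~\ref{theorem Fishman game housdorf}'' as a black box. What you have actually written is a proof of Lemma~\ref{lemma: Housdorf and sum 1/2} (the equivalence of $\dim_H(Z_{\mathrm{concat}})<\tfrac12$ with $\sum_{p\in Z}|\mathcal{A}|^{-\mathrm{len}(p)/2}<1$), with the theorem as an ingredient. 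As a proof of the theorem this is circular: nowhere do you derive a Hausdorff-dimension lower bound from the block-counting hypothesis. For calibration, the paper itself offers no proof of this statement either --- it is quoted as a corollary of the Hausdorff-dimension game of Das, Fishman, Simmons and Urba\'nski~\cite{DasFishmanHousdorfGame} --- so a blind proof would have to either rederive it from that machinery or give a direct argument.

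A direct argument is short, and it is exactly what is missing from your text: put on $A_{\mathrm{concat}}$ the Bernoulli measure $\mu$ under which the successive length-$k$ blocks are i.i.d.\ uniform on $A$. A cylinder $[T_p]$ with $\mathrm{len}(p)=n$ constrains at least $\lfloor n/k\rfloor$ full blocks, so $\mu([T_p])\le |A|^{-\lfloor n/k\rfloor}\le |\mathcal{A}|^{-kd\lfloor n/k\rfloor}\le |\mathcal{A}|^{kd}\,|\mathcal{A}|^{-dn}=|\mathcal{A}|^{kd}\bigl(\operatorname{diam}[T_p]\bigr)^{d}$; since in this ultrametric space any cover may be refined to a cover by cylinders without increasing diameters (as in the paper's own proof of Lemma~\ref{lemma: Housdorf and sum 1/2}), the mass distribution principle yields $H^{d}(A_{\mathrm{concat}})\ge |\mathcal{A}|^{-kd}>0$, hence $\dim_H(S)\ge\dim_H(A_{\mathrm{concat}})\ge d$ by monotonicity. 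I will add that, read instead as a blind proof of Lemma~\ref{lemma: Housdorf and sum 1/2}, your argument is correct and genuinely different from the paper's: the generating-function/root-test extraction of $\limsup_k c_k^{1/k}\ge|\mathcal{A}|^{1/2}$, with unique decodability of the prefix code giving $|A_k|=c_k$, replaces the paper's uniform-length padding construction $\operatorname{Extend}_n$ and is arguably cleaner --- but it answers a different question than the one posed.
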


\begin{proof}[Proof of Lemma~\ref{lemma: Housdorf and sum 1/2}]
We recall the definition of Hausdorff dimension.

\smallskip

Let \(\delta>0\). A \(\delta\)-cover of \(Z_{\mathrm{concat}}\) is a countable collection \(\{U_i\}_{i\in I}\) such that \(Z_{\mathrm{concat}}\subseteq\bigcup_{i\in I}U_i\) and \(\operatorname{diam}(U_i)<\delta\) for every \(i\in I\). The \(d\)-dimensional Hausdorff measure of \(Z_{\mathrm{concat}}\) is
\[
H^d(Z_{\mathrm{concat}})
=
\liminf_{\delta\to 0}
\left\{
\sum_{i\in I}(\operatorname{diam} U_i)^d
\;\middle|\;
\{U_i\}_{i\in I}\text{ is a }\delta\text{-cover of }Z_{\mathrm{concat}}
\right\},
\]
and the Hausdorff dimension is
\[
\dim_H(Z_{\mathrm{concat}})
=
\inf\{d\ge 0:\; H^d(Z_{\mathrm{concat}})=0\}.
\]

Let \(2n=\max_{p\in Z}\operatorname{len}(p)\), and fix \(p\in Z\). We replace \(p\) by a uniformly expanded family of extensions of length \(2n\), constructed by inserting a fixed symbol \(a_0\in\mathcal A\) in every even position and allowing every symbol of \(\mathcal A\) to appear in the intervening odd positions.

Recall that we have assumed every \(p\in Z\) has even length. Define
\[
\operatorname{Extend}_n(p)
:=
\Bigl\{
p \circ \langle a_0,a_1\rangle \circ \langle a_0,a_2\rangle \circ \cdots \circ
\langle a_0,a_{\,n-(\operatorname{len}(p)/2)}\rangle
:
a_1,\dots,a_{\,n-(\operatorname{len}(p)/2)}\in\mathcal A
\Bigr\}.
\]
Every \(q\in\operatorname{Extend}_n(p)\) satisfies \(\operatorname{len}(q)=2n\), and
\[
\bigl|\operatorname{Extend}_n(p)\bigr|
=|\mathcal A|^{\,n-(\operatorname{len}(p)/2)}.
\]

Set
\[
W_n := \left( \bigcup_{p\in Z} \operatorname{Extend}_n(p)\right)_{\mathrm{concat}}.
\]
We claim that if \(\dim_H(Z_{\mathrm{concat}})<1/2\) then \(\dim_H(W_n)<1/2\). To see this, let \(\{U_i\}_{i\in I}\) be a \(\delta\)-cover of \(Z_{\mathrm{concat}}\). We may assume each \(U_i\) is a basic open set: if \(\operatorname{diam}(U_i)=0\) then \(U_i\) contains a single point and can be replaced by a basic open set of arbitrarily small diameter; moreover,
\[
\operatorname{diam}(U_i)=|\mathcal A|^{-\min\{k : \exists x,y\in U_i,\; x_k\neq y_k\}},
\]
so replacing \(U_i\) by a containing basic open set does not increase its diameter.

Since the infimum defining \(H^d\) may be taken over basic open covers, we may further assume each \(U_i\) has the form \([T_p]\), where \(p\) is a prefix of a concatenation \(p_1\circ\cdots\circ p_k\) with \(p_1,\dots,p_k\in Z\). For such a basic open set \(U_i=[T_p]\) with
\(p=p_1\circ\cdots\circ p_{k-1}\circ p'\) and \(p'\preceq p_k\in Z\), define
\[
\operatorname{Extend}_n(U_i)
:=\bigl\{[T_q]:
q=q_1\circ\cdots\circ q_{k-1}\circ p',\;
q_j\in\operatorname{Extend}_n(p_j)\ \text{for }1\le j\le k-1\bigr\}.
\]
Then \(\bigcup_{i\in I}\operatorname{Extend}_n(U_i)\) is an open cover of \(W_n\).

Because \(\dim_H(Z_{\mathrm{concat}})<1/2\), there exists \(\epsilon_0>0\) such that \(H^{1/2-\epsilon_0}(Z_{\mathrm{concat}})=0\). Put \(\epsilon=\epsilon_0/(2n)\). We claim that
\[
\sum_{i\in I}\sum_{U'\in\operatorname{Extend}_n(U_i)}(\operatorname{diam} U')^{\,1/2-\epsilon}
\le
\sum_{i\in I}(\operatorname{diam} U_i)^{\,1/2-\epsilon_0}.
\]
To verify this inequality, fix \(U_i=[T_p]\) with
\(p=p_1\circ\cdots\circ p_{k-1}\circ p'\) as above. Then
\[
\sum_{U'\in\operatorname{Extend}_n(U_i)}(\operatorname{diam} U')^{\,1/2-\epsilon}
=
\sum_{\substack{q_j\in\operatorname{Extend}_n(p_j)\\ j=1,\dots,k-1}}
\biggl(\frac{1}{|\mathcal A|}\biggr)^{(2n(k-1)+\operatorname{len}(p'))(1/2-\epsilon)}.
\]
Since there are \(|\mathcal A|^{\,n-\operatorname{len}(p_j)/2}\) choices for each \(q_j\), the preceding display equals
\[
\biggl(\frac{1}{|\mathcal A|}\biggr)^{(2n(k-1)+\operatorname{len}(p'))(1/2-\epsilon)}
\prod_{j=1}^{k-1} |\mathcal A|^{\,n-\operatorname{len}(p_j)/2}.
\]
A short rearrangement yields
\[
\sum_{U'\in\operatorname{Extend}_n(U_i)}(\operatorname{diam} U')^{\,1/2-\epsilon}
=
|\mathcal A|^{\,\epsilon(2n(k-1)+\operatorname{len}(p'))}\cdot
\bigl(\operatorname{diam} U_i\bigr)^{1/2}.
\]
Since \(\epsilon(2n(k-1)+\operatorname{len}(p'))\le \epsilon_0\operatorname{len}(p)\) and \((\operatorname{diam} U_i)^{\epsilon_0} =  |\mathcal{A}|^{-\epsilon_0 \operatorname{len}(p)} \), we obtain
\[
\sum_{U'\in\operatorname{Extend}_n(U_i)}(\operatorname{diam} U')^{\,1/2-\epsilon}
\le (\operatorname{diam} U_i)^{1/2-\epsilon_0},
\]
as claimed.

It follows that
\[
H^{1/2-\epsilon}(W_n)
\le
\liminf_{\delta\to 0}
\Biggl\{
\sum_{i\in I}\sum_{U'\in\operatorname{Extend}_n(U_i)}(\operatorname{diam} U')^{\,1/2-\epsilon}
\;\Bigm|\;
\{U_i\}_{i\in I}\ \text{is an open }\delta\text{-cover of }Z_{\mathrm{concat}}
\Biggr\}
\]
\[
\le
\liminf_{\delta\to 0}
\Biggl\{
\sum_{i\in I}(\operatorname{diam} U_i)^{\,1/2-\epsilon_0}
\;\Bigm|\;
\{U_i\}_{i\in I}\ \text{is an open }\delta\text{-cover of }Z_{\mathrm{concat}}
\Biggr\}
=0.
\]
Hence \(\dim_H(W_n)<1/2\).

\medskip

Denote
\[
A:=\bigcup_{p\in Z}\operatorname{Extend}_n(p)\subseteq\mathcal A^{2n}.
\]
Note that
\[
|A|=\sum_{p\in Z} |\mathcal A|^{\,n-(\operatorname{len}(p)/2)}.
\]
Since \(A_{\mathrm{concat}}\subseteq W_n \) and \(\dim_H(W_n)<1/2\), Theorem~\ref{theorem Fishman game housdorf} implies \(|A|<|\mathcal A|^n\). Therefore
\[
\sum_{p\in Z} \frac{1}{|\mathcal A|^{\,\operatorname{len}(p)/2}}<1,
\]
which is the desired inequality.

\medskip

For the converse, assume
\[
\sum_{p\in Z}\biggl(\frac{1}{|\mathcal A|}\biggr)^{d\cdot\operatorname{len}(p)}<1
\quad\text{for }d=\tfrac12.
\]
By continuity in the exponent \(d\), there exists \(\epsilon>0\) such that
\[
\sum_{p\in Z}\biggl(\frac{1}{|\mathcal A|}\biggr)^{(1/2-\epsilon)\operatorname{len}(p)}<1.
\]
For each \(n\ge 1\) the family
\[
\bigcup_{(p_1,\dots,p_n)\in Z^n} [T_{p_1\circ\cdots\circ p_n}]
\]
is an open cover of \(Z_{\mathrm{concat}}\), and
\[
\max_{(p_1,\dots,p_n)\in Z^n}\operatorname{diam}([T_{p_1\circ\cdots\circ p_n}])
=|\mathcal A|^{-n\cdot \min_{p\in Z}\operatorname{len}(p)} \underset{n \to \infty}{\to}0.
\]
Hence
\[
H^{1/2-\epsilon}(Z_{\mathrm{concat}})
\le
\lim_{n\to\infty}
\sum_{(p_1,\dots,p_n)\in Z^n}
\operatorname{diam}([T_{p_1\circ\cdots\circ p_n}])^{\,1/2-\epsilon}
\]
\[
=
\lim_{n\to\infty}
\sum_{(p_1,\dots,p_n)\in Z^n}
\biggl(\frac{1}{|\mathcal A|}\biggr)^{(1/2-\epsilon)\sum_{j=1}^n\operatorname{len}(p_j)}
=
\lim_{n\to\infty}
\Biggl(
\sum_{p\in Z}\biggl(\frac{1}{|\mathcal A|}\biggr)^{(1/2-\epsilon)\operatorname{len}(p)}
\Biggr)^{\!n}.
\]
By the assumption the last expression vanishes, therefore \(H^{1/2-\epsilon}(Z_{\mathrm{concat}})=0\) and \(\dim_H(Z_{\mathrm{concat}})<1/2\).
\end{proof}

\begin{example}\label{example: infinite minimal size is win for 2}
Consider the game played on the tree $\{0,1\}^{\le \mathbb{N}}$ with open winning set
\[
W=\bigcup_{p\in Z}[T_p],
\qquad
\text{where }
Z:=\Bigl\{\underbrace{\langle 0,1\rangle \circ \langle 0,1\rangle \circ \cdots \circ \langle 0,1\rangle}_{k\text{ times}} \circ \langle 0,0\rangle : k\in \mathbb{N}\Bigr\}.
\]
Equivalently, $Z$ consists of all even-length positions in which all even-indexed entries except the last are equal to $1$, while all remaining entries are equal to $0$. Observe that
\[
\sum_{p\in Z} \frac{1}{2^{\lfloor \mathrm{len}(p)/2 \rfloor}}
=\sum_{k=1}^{\infty} \frac{1}{2^k}
=1.
\]
Therefore, by Lemma~\ref{Main result: sum condition}, Player~2 has a winning strategy in this game. Indeed, such a strategy is given by always choosing the action $1$ on her turn.

One may also verify that
\[
Z_{\mathrm{concat}}
=
\Bigl\{ x=\langle x_0,x_1,\ldots\rangle : x_{2k}=0 \text{ for all } k\in\mathbb{N} \Bigr\}
\setminus
\{\langle 0,1,0,1,0,1,\ldots\rangle\}.
\]
Consequently, $\dim_H(Z_{\mathrm{concat}})=\tfrac12$, and a similar conclusion can be reached by exploiting directly the fact that $Z$ is infinite.
\end{example}

\section{Characterization via Maximal Prefix Codes} \label{Section: max prefix codes}

We introduce an alternative viewpoint for the study of games with open winning sets.
Prefix codes (also known as \emph{prefix-free} or \emph{instantaneous} codes) are a classical object of study in information theory and theoretical computer science. In this section we establish a precise connection between prefix codes and two-player alternating-move games with open winning sets. In particular, we show that maximal prefix codes correspond exactly to open winning sets for which Player~1 has a unique winning strategy. 

\begin{definition}[Prefix codes and maximal prefix codes] \label{def of prefix codes}
A \emph{prefix code} over a finite alphabet $\mathcal{A}$ is a set $C$ of finite sequences (or \say{words}) over $\mathcal{A}$ such that no word in $C$ is a prefix of another. A prefix code $C$ is said to be \emph{maximal} if there is no prefix code $C'$ over the same alphabet with $C \subsetneq C'$.
\end{definition}

The next lemma formalizes the connection between maximal prefix codes and open winning sets in which Player~1 has exactly one winning strategy. Fixing a strategy for Player~1, we show that a maximal prefix code naturally induces a minimal-size (in the sense of Definition ~\ref{definition: minimal size}) open winning set. Conversely, every such winning set arises in this way from a maximal prefix code.

\medskip

Fix a strategy $s_1$ of Player~1 in the tree $\mathcal{A}^{\le \N}$. For any set $C \subseteq \mathcal{A}^{\leq \N}$, define

\medskip 

\[Z(C)= Z_{s_1}(C) := \Bigl\{\,  p = \langle a_1, c_1, a_2, c_2, \dots, a_n, c_n \rangle : \langle c_1, \dots, c_n \rangle \in C, a_1,\dots ,a_n \in \mathcal{A}\]
\[\text{ are the actions prescribed to Player~1 by the strategy } s_1 \,\Bigr\}.
\]

\begin{lemma} \label{lemma:create winning set from max prefix code}
A set $C$ is a maximal prefix code if and only if the following two conditions hold:
\begin{enumerate}
\item $Z_{s_1}(C)$ is a minimal-size set, and
\item the strategy $s_1$
is (the only) winning strategy for Player~1 in the game $(T,W)$, where
\[
T = \mathcal{A}^{\leq \mathbb N}
\qquad \text{and} \qquad
W = \bigcup_{p \in Z_{s_1}(C)} [T_p].
\]
\end{enumerate}
\end{lemma}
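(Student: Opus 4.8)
The plan is to route everything through a single bijection that turns the game $(T,W)$ into statements about the code $C$, and then to recognize the two conditions as the classical characterizations of a maximal (finite) prefix code: the Kraft equality and completeness. Since $s_1$ is fixed, a play consistent with $s_1$ is determined entirely by Player~2's moves, so there is a bijection $\Phi$ between sequences $\mathbf c=\langle c_1,c_2,\dots\rangle\in\mathcal A^{\mathbb N}$ and plays consistent with $s_1$, where $\Phi(\mathbf c)=\langle a_1,c_1,a_2,c_2,\dots\rangle$ with each $a_i$ the action prescribed by $s_1$ after the common history. The defining feature of $Z_{s_1}(C)$ is that $\langle c_1,\dots,c_n\rangle\mapsto\langle a_1,c_1,\dots,a_n,c_n\rangle$ is a bijection from $C$ onto $Z_{s_1}(C)$, and a prefix $\langle c_1,\dots,c_n\rangle$ of $\mathbf c$ lies in $C$ exactly when the matched position is a prefix of $\Phi(\mathbf c)$; hence $\Phi(\mathbf c)\in W$ if and only if $\mathbf c$ has a prefix in $C$. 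Since $\mathrm{len}(\langle a_1,c_1,\dots,a_n,c_n\rangle)=2n$, we also have $\lfloor \mathrm{len}(p)/2\rfloor=\mathrm{len}(c)$ for the matched pair, so the minimal-size sum of $Z_{s_1}(C)$ equals the Kraft sum $\sum_{c\in C}|\mathcal A|^{-\mathrm{len}(c)}$ term by term.

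With this dictionary, condition~(1) (Definition~\ref{definition: minimal size}) is literally the Kraft equality $\sum_{c\in C}|\mathcal A|^{-\mathrm{len}(c)}=1$, and the ``winning'' half of condition~(2) says that every $\mathbf c\in\mathcal A^{\mathbb N}$ has a prefix in $C$, i.e.\ that $C$ is complete. For the forward direction I would invoke the classical facts for finite prefix codes: a finite maximal prefix code is complete (a code with a longest word admits, for any infinite word avoiding it, a prefix longer than every codeword, which is then comparable to no codeword and could be adjoined, contradicting maximality) and has Kraft sum $1$ (completeness forces the sum $\ge 1$ by Lemma~\ref{lemma: necceserry open condition lengths} through the bijection, while Kraft's inequality forces it $\le 1$). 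This gives (1) and that $s_1$ is winning. For uniqueness I would argue by a forcing recursion along positions reached before the play enters $W$: assuming (as is standard) that codewords are nonempty, the empty play is not yet in $W$; at any even position $q$ consistent with a purported winning $s_1'$ whose history has not met $C$, every $p\in Z_{s_1}(C)$ still extending $[T_q]$ satisfies $q\prec p$, and the coordinate of $p$ immediately after $q$ is exactly $s_1(q)$, since the Player~1 coordinates of $p$ are $s_1$-prescribed from the common history $q$. Thus $s_1'(q)\neq s_1(q)$ would give $[T_{q\circ\langle s_1'(q)\rangle}]\cap W=\emptyset$, so Player~2 wins by playing arbitrarily, a contradiction; hence $s_1'$ agrees with $s_1$ on every position reachable before a win, which is the sense in which $s_1$ is the only winning strategy.

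The converse uses only the ``winning'' half of (2) together with (1). From $s_1$ winning we already obtain that $C$ is complete. To see that $C$ is prefix-free, suppose $c\prec c'$ in $C$; the matched positions then satisfy $p\prec p'$ in $Z_{s_1}(C)$, so $[T_{p'}]\subseteq[T_p]$ and $W$ is unchanged after deleting $p'$ from the generating set. But then Player~1 still wins $(T,W)$ while the generating sum has dropped to $1-|\mathcal A|^{-\mathrm{len}(c')}<1$, contradicting Lemma~\ref{lemma: necceserry open condition lengths}. Hence $C$ is a prefix code, and a complete prefix code cannot be properly extended (any candidate new word, extended to an infinite sequence, must be comparable to some codeword), so $C$ is maximal.

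The main obstacle I expect is twofold. First, the uniqueness clause is delicate: for a finite code the game is decided in finitely many moves, so a winning strategy is pinned down only on positions preceding the win, and ``the only winning strategy'' must be read in exactly this sense (one must also exclude the degenerate empty-word code, where $W=[T]$ and every strategy wins). Second, the forward direction genuinely relies on finiteness of $C$: an \emph{infinite} maximal prefix code need be neither complete nor of Kraft sum $1$, so the stated equivalence should be understood in the finite regime the paper has already adopted. The remaining steps—the bijection $\Phi$ and the term-by-term identification of the two sums—are routine once the projection is set up carefully.
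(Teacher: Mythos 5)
Your proof is correct (in the finite regime the paper works in throughout) and follows essentially the same skeleton as the paper's own argument: the same dictionary between Player~2's move sequences and plays consistent with $s_1$, Kraft's inequality combined with Lemma~\ref{lemma: necceserry open condition lengths} to pin the sum at $1$, the same deletion argument (drop $p^2$ from the generating set and contradict Lemma~\ref{lemma: necceserry open condition lengths}) for the prefix property, and a defeating continuation of Player~2 --- equivalently, an infinite extension of a candidate new codeword --- for maximality. There are, however, two places where you go beyond the paper, and they are worth recording. First, the paper cites Staiger's Lemma~3 for the fact that a maximal prefix code is complete, whereas you prove it directly via the longest-word argument; this makes the forward direction self-contained, at the cost of visibly restricting to finite codes (a restriction the cited route also needs). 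Second, and more substantively, the paper's proof never addresses the \say{(the only)} clause of the statement: it shows that $s_1$ \emph{is} a winning strategy and that $Z_{s_1}(C)$ is minimal-size, but uniqueness is nowhere argued. Your forcing recursion --- at any even position $q$ not yet past a codeword, every $p\in Z_{s_1}(C)$ extending $[T_q]$ has $s_1(q)$ as its next Player~1 coordinate, so any deviation lands in a cylinder disjoint from $W$ --- actually establishes uniqueness, in the correct sense of agreement at positions reachable before the play enters $W$, and your exclusion of the degenerate empty-word code is needed for that clause to be literally true. Your final caveat is also well taken: an infinite maximal prefix code need not be complete (e.g. $\{\langle 0^n\rangle\circ\langle 1\rangle : n\ge 0\}$ over $\{0,1\}$), so the forward direction genuinely requires the standing finiteness assumption that the paper invokes only implicitly.
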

\begin{proof}
We prove the two directions separately.

\medskip
\noindent
\emph{($\Rightarrow$)}  
Assume that $C$ is a maximal prefix code. We claim that the strategy $s_1$ of Player~1 is a winning strategy in the game $(T,W)$.

Indeed, fix an arbitrary strategy $s_2$ for Player~2. Write the resulting play in the form
\[
x(s_1,s_2)=\langle a_1,x_1,a_2,x_2,\dots\rangle .
\]
By Lemma~3 of \cite{maxprefixcodeiswinning}, there exists a word
\[
c=\langle c_1,\dots,c_k\rangle \in C
\]
which is a prefix of $\langle x_1,x_2,\dots\rangle$. Consequently,
\[
\langle a_1,c_1,a_2,c_2,\dots,a_k,c_k\rangle \in Z(C)
\]
is a prefix of the play $x(s_1,s_2)$, showing that $x(s_1,s_2)\in W$.

To see that $Z(C)$ is of minimal size, recall from Lemma~\ref{lemma: necceserry open condition lengths} that any open winning set $Z$ must satisfy
\[
\sum_{p\in Z} \frac{1}{|\mathcal{A}|^{\lfloor \mathrm{len}(p)/2 \rfloor}} \geq 1.
\]
On the other hand, by Kraft’s inequality for prefix codes (see Theorem~1 in \cite{maxprefixcodeiswinning}),
\[
\sum_{p\in Z(C)} \frac{1}{|\mathcal{A}|^{\lfloor \mathrm{len}(p)/2 \rfloor}}
= \sum_{c\in C} \frac{1}{|\mathcal{A}|^{\mathrm{len}(c)}} \leq 1.
\]
Combining the two inequalities, we conclude that
\[
\sum_{p\in Z(C)} \frac{1}{|\mathcal{A}|^{\lfloor \mathrm{len}(p)/2 \rfloor}} = 1,
\]
and hence $Z(C)$ is minimal-size set.

\medskip
\noindent
\emph{($\Leftarrow$)}  
Assume that Player~1 has a winning strategy $s_1$ in $(T,W)$ and that
\[
\sum_{p\in Z(C)} \frac{1}{|\mathcal{A}|^{\lfloor \mathrm{len}(p)/2 \rfloor}} = 1.
\]

We first show that $C$ is a prefix code. Suppose otherwise that there exist distinct words $c^1,c^2\in C$ such that $c^1$ is a prefix of $c^2$. Let $p^1,p^2\in Z(C)$ denote the corresponding positions. Then $p^1$ is a prefix of $p^2$, and therefore
\[
[T_{p^2}] \subseteq [T_{p^1}].
\]
It follows that
\[
W=\bigcup_{p\in Z(C)} [T_p]
   =\bigcup_{p\in Z(C)\setminus\{p^2\}} [T_p].
\]
Since $s_1$ is a winning strategy, Lemma~\ref{lemma: necceserry open condition lengths} implies
\[
\sum_{p\in Z(C)\setminus\{p^2\}}
\frac{1}{|\mathcal{A}|^{\lfloor \mathrm{len}(p)/2 \rfloor}} \geq 1,
\]
which contradicts the assumption
\[
\sum_{p\in Z(C)} \frac{1}{|\mathcal{A}|^{\lfloor \mathrm{len}(p)/2 \rfloor}} = 1.
\]
Thus $C$ must be a prefix code.

It remains to show that $C$ is maximal. Suppose there exists a word
$c^\ast\in \mathcal{A}^{\leq\mathbb{N}}\setminus C$ such that
$C\cup\{c^\ast\}$ is still a prefix code. Define a strategy $s_2$ for Player~2 by
\[
s_2(p)=
\begin{cases}
c^\ast_{\frac{\mathrm{len}(p)-1}{2}}
& \text{if } \mathrm{len}(p)\leq 2\,\mathrm{len}(c^\ast)+1,\\[4pt]
\text{some } a \in \mathcal{A}  & \text{otherwise}.
\end{cases}
\]
Then the resulting play $x(s_1,s_2)$ cannot have any prefix in $Z(C)$: otherwise, for some $c\in C$, either $c$ would be a prefix of $c^\ast$ or $c^\ast$ a prefix of $c$, contradicting the assumption that $C\cup\{c^\ast\}$ is a prefix code. This contradicts the fact that $s_1$ is winning, and hence $C$ must be maximal.
\end{proof}

\begin{remark}
    The preceding lemma establishes a precise correspondence between maximal prefix codes and open winning sets for which Player~1 has a unique winning strategy. More precisely, every open winning set admitting exactly one winning strategy arises from a maximal prefix code, and conversely, every maximal prefix code determines a family of such winning sets.
    This connection motivates the characterization that we now present, and will be used in the proof of theorem~\ref{Theorem: Kraft like inequality with coverings}.
\end{remark}

We restrict attention to games played on the full binary tree and consider open winning sets. Our goal is to give a complete and explicit criterion for the existence of a winning strategy for Player~1. The guiding idea is that a strategy for Player~1 should induce, from the set of positions consistent with that strategy, a maximal prefix code. The following construction provides a convenient mechanism for detecting this property.

\medskip
Let $x=\langle x_1,x_2,\dots\rangle \in \{0,1\}^{\mathbb{N}}$ and let
$p=\langle a_1,\dots,a_n\rangle \in \{0,1\}^{\leq \mathbb{N}}$ be a finite position. Define
\[
c_x(p)
  = \langle c_x^1(p),\dots,c_x^{\lfloor \mathrm{len}(p)/2 \rfloor}(p)\rangle,
\]
where
\[
c_x^k(p) := x_k \cdot a_{2k-1} + a_{2k} \pmod{2}.
\]
In words, for a fixed choice of $x$, the map $c_x$ assigns to each position in the binary tree a position in the binary tree of half the length, in a manner that coherently encodes all positions consistent with a fixed strategy of Player~1. While this construction is somewhat artificial, it serves an important purpose: positions that are inconsistent with a single strategy of Player~1 are mapped to the same words and thus prevent the code being maximal prefix code.

\medskip
We may now state the main result of this section.
\begin{theorem}
\label{thm:binary-tree-prefix}
Player~1 has a winning strategy in the game
\[
(T,W), \qquad T=\{0,1\}^{\leq\mathbb{N}}, \quad W=\bigcup_{p\in Z}[T_p],
\]
where $Z$ is a minimal-size set of positions, if and only if for every
$x\in\{0,1\}^{\mathbb{N}}$ the set
\[
C_x(Z):=\{\, c_x(p) : p\in Z \,\}
\]
is a maximal prefix code.
\end{theorem}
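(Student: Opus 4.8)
My plan is to route both directions through a single structural pivot: for a minimal-size $Z$, Player~1 has a winning strategy if and only if all positions of $Z$ are consistent with one strategy $s_1$ (equivalently $Z=Z_{s_1}(\hat Z)$), in which case $\hat Z$ is a maximal prefix code. I would therefore prove the chain of equivalences: \emph{Player~1 wins} $\Longleftrightarrow$ \emph{$Z$ follows a single strategy $s_1$ and $s_1$ is winning} $\Longleftrightarrow$ \emph{$\hat Z$ is a maximal prefix code with $Z=Z_{s_1}(\hat Z)$} $\Longleftrightarrow$ \emph{$C_x(Z)$ is a maximal prefix code for every $x$}. The middle equivalence is precisely Lemma~\ref{lemma:create winning set from max prefix code}; throughout I assume, via Lemma~\ref{lemma: can ssume even lengths}, that all positions of $Z$ have even length, so that $\mathrm{len}(c_x(p))=\mathrm{len}(p)/2=\mathrm{len}(\hat p)$.

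For the forward direction I would first show that a winning $s_1$ together with minimality forces every $p\in Z$ to follow $s_1$. Writing $a_1=s_1(\langle\rangle)$, Lemma~\ref{lemma: necceserry open condition lengths} applied inside $T_{\langle a_1\rangle}$ gives $\sum_{p\in Z\cap T_{\langle a_1\rangle}}|\mathcal{A}|^{-\lfloor \mathrm{len}(p)/2\rfloor}\ge 1$, and since the total sum equals $1$ this forces $Z\subseteq T_{\langle a_1\rangle}$ and each $Z\cap T_{\langle a_1,a_2\rangle}$ to be minimal-size in its subtree; induction then produces one strategy $s_1$ with $Z=Z_{s_1}(\hat Z)$, whence $\hat Z$ is maximal by Lemma~\ref{lemma:create winning set from max prefix code}. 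Because every $p\in Z$ follows $s_1$, Player~1's move $a_{2k-1}$ is a function of the earlier Player~2 moves $c_1,\dots,c_{k-1}$, so
\[
c_x^k(p)\equiv c_k + x_k\,a_{2k-1}(c_1,\dots,c_{k-1})\pmod 2 .
\]
Thus $c_x(p)=\psi_{x,s_1}(\hat p)$ for $p\in Z$, where $\psi_{x,s_1}$ sends $\langle c_1,\dots,c_n\rangle$ to $\langle c_1+f_1,\dots,c_n+f_n\rangle$ with each $f_k$ depending only on $c_1,\dots,c_{k-1}$. Such a triangular map is a length- and prefix-preserving bijection of the binary tree (invert it coordinatewise), so it carries the maximal prefix code $\hat Z$ onto the maximal prefix code $C_x(Z)$, for every $x$.

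For the converse, taking $x=\langle 0,0,\dots\rangle$ gives $c_x^k(p)=c_k$, i.e. $C_0(Z)=\hat Z$, so the hypothesis already makes $\hat Z$ maximal; by the chain it remains to deduce that the hypothesis forces $Z$ to follow a single strategy. I would attempt this by induction after peeling the first round: if all $p\in Z$ share Player~1's first move, split by Player~2's reply; the two halves already have codewords differing in their first letter for every $x$, so the maximal-prefix-code property descends to each half, the induction hypothesis applies, and the two subtree strategies glue. The governing observation for the remaining case is that two positions satisfy $c_x(p)=c_x(p')$ for a suitable $x$ exactly when, round by round, they agree on Player~2's move wherever they agree on Player~1's move (at rounds where Player~1 differs one simply sets $x_k=a_{2k}+a'_{2k}$), with the analogous statement for $\mathrm{len}(p)\le\mathrm{len}(p')$ yielding a prefix-collision.

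The main obstacle is exactly the \emph{root-conflict} case, in which $Z$ contains positions with Player~1 first move $0$ and positions with first move $1$: here I must exhibit one $x$ that breaks maximality, i.e. find a ``collapsible'' pair in the sense above. My intended route is a greedy descent that keeps one branch into each first-move side agreeing wherever Player~1 agrees, invoking minimality together with the maximality of $\hat Z$ to guarantee that the descent never stalls and, by finiteness of $Z$, terminates at a genuine collision. The delicate point — and where I expect the real difficulty to concentrate — is that the twist in $c_x$ only mixes moves \emph{within the same round}, so a root-level inconsistency can be masked by compensating differences in later rounds; establishing that minimality nonetheless forces a collapsible pair to exist is the crux on which the converse stands or falls.
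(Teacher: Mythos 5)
Your forward direction is correct, and its second half is genuinely different from the paper's. Both arguments start identically: minimality of $Z$ together with Lemma~\ref{lemma: necceserry open condition lengths} forces $Z=Z_{s_1}$, i.e.\ all of $Z$ follows the winning strategy. From there the paper verifies by hand that $C_x(Z)$ is a prefix code (looking at the first coordinate of disagreement, with consistency with $s_1$ ruling out disagreement at odd coordinates) and then invokes Kraft equality for maximality; you instead factor $c_x$ on $s_1$-consistent positions as $\psi_{x,s_1}$ applied to $\hat p$, where $\psi_{x,s_1}$ is a triangular, hence length- and prefix-preserving, bijection of the binary tree whose inverse is of the same kind, and transport the maximality of $\hat Z$ (Lemma~\ref{lemma:create winning set from max prefix code}) through this tree automorphism. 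That is valid and arguably cleaner than the paper's case analysis.

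The converse, however, has a genuine gap --- precisely the one you flag. Your route requires showing that if $Z$ is not consistent with a single strategy then some $C_x(Z)$ fails to be a maximal prefix code, and you propose to witness this by a \emph{collision}: a pair $p\neq p'$ in $Z$ with $c_x(p)\preceq c_x(p')$. The ``greedy descent'' in the root-conflict case is a hope, not an argument, and the masking phenomenon you describe is exactly what blocks it. The paper's proof of this direction rests on a different idea that never hunts for collisions: it builds a concrete candidate strategy directly from $Z$, namely $s_1(p)=a$ iff both $p\circ\langle a,0\rangle$ and $p\circ\langle a,1\rangle$ are prefixes of elements of $Z$, and it certifies non-maximality by an \emph{adjoinable word}. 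If some play consistent with $s_1$ avoids $W$, take the first position $p$ along it at which no good $a$ exists; then each Player-1 move $a$ admits at most one continuation $b_a$ such that $p\circ\langle a,b_a\rangle$ is a prefix of an element of $Z$, and the single-round twist $x_k=b_0+b_1 \pmod 2$ (with $x_j=0$ elsewhere) makes every codeword coming from an element of $Z$ extending $p$ carry the letter $b_0$ at that round; hence $c_x(p)\circ\langle 1-b_0\rangle$ can be adjoined to $C_x(Z)$, contradicting maximality. This ``hole below the failure position'' mechanism is the missing idea; once $s_1$ is known to be winning, your own chain of equivalences closes.

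One further remark: your masking worry is well-founded even against the paper's write-up. The step ``all elements of $C_x(Z)$ extending $c_x(p)$ agree on their $k$-th coordinate'' tacitly assumes every codeword in the cone above $c_x(p)$ comes from an element of $Z$ extending $p$, whereas an element of $Z$ that diverges from $p$ only at Player-1 (odd) coordinates can, for this $x$ (zero outside round $k$), also land in that cone or below $c_x(p)$; the argument is airtight only when the failure occurs at the root, where every element of $Z$ extends $p$. So you have correctly located where the real difficulty concentrates, but as submitted your converse is a plan rather than a proof, and the paper's adjoin-a-word device (suitably elaborated) is what a complete proof needs.
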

\begin{proof}
Assume first that Player~1 has a winning strategy $s_1$.  
We claim that for every $x \in \{0,1\}^{\mathbb{N}}$, the set
$C_x(Z_{s_1})$ is a maximal prefix code, where $Z_{s_1}$ denotes the set
of positions in $Z$ that are consistent with the strategy $s_1$.

We begin by showing that $C_x(Z_{s_1})$ is a prefix code.
Suppose, to the contrary, that there exist
$p=\langle a_1,\dots,a_m\rangle$ and
$p'=\langle a'_1,\dots,a'_n\rangle$ in $Z_{s_1}$ such that
$c_x(p)$ is a prefix of $c_x(p')$. Then for every
$k \leq \lfloor m/2 \rfloor$ we have
\[
x_k \cdot a_{2k-1} + a_{2k}
\equiv
x_k \cdot a'_{2k-1} + a'_{2k}
\pmod{2}.
\]
We claim that $p$ must be a prefix of $p'$. Otherwise, let $i$ be the
minimal index such that $a_i \neq a'_i$.

\medskip
\noindent
\textbf{Case 1:} $i=2k$ for some $k \leq \lfloor m/2 \rfloor$.  
Then $a_{2k-1}=a'_{2k-1}$ and
\[
x_k \cdot a_{2k-1} + a_{2k}
\neq
x_k \cdot a'_{2k-1} + a'_{2k},
\]
contradicting the equality above.

\medskip
\noindent
\textbf{Case 2:} $i=2k-1$ for some $k \leq \lfloor m/2 \rfloor$.  
This case is impossible, since $a_{2k-1}$ and $a'_{2k-1}$ are both moves
chosen by Player~1 at the same position
$\langle a_1,\dots,a_{2k-2}\rangle=\langle a'_1,\dots,a'_{2k-2}\rangle$,
and therefore must coincide.

\medskip
In all cases we conclude that $p$ is a prefix of $p'$, contradicting the
assumption that $Z$ (and hence $Z_{s_1}$) is minimal. Therefore,
$C_x(Z_{s_1})$ is a prefix code.

Since, by assumption $Z$ is a minimal-size open winning set, it follows that $Z_{s_1}=Z$.
By Kraft’s inequality, we conclude that $C_x(Z)$ is in fact a
\emph{maximal} prefix code.

\medskip
\noindent
For the converse direction, assume that for every
$x \in \{0,1\}^{\mathbb{N}}$, the set $C_x(Z)$ is a maximal prefix code.
We define a strategy $s_1$ for Player~1 as follows. For every even-length
position $p \in T$, set
\[
s_1(p) =
\begin{cases}
a \in \{0,1\}, & \text{if both } p \circ \langle a,0\rangle
\text{ and } p \circ \langle a,1\rangle \text{ are prefixes of elements of } Z,\\
0, & \text{if no such } a \text{ exists}.
\end{cases}
\]
This strategy is well defined.

We claim that $s_1$ is a winning strategy for Player~1.
Fix any strategy $s_2$ of Player~2 and consider the induced play
$x(s_1,s_2)$. We show that at every even position along this play, there
exists an action $a \in \{0,1\}$ satisfying the first condition of
$s_1$ above---thus proving it is indeed a winning strategy.

Suppose, to the contrary, that there exists a strategy $s_2$ and
a minimal $k$ such that for
\[
p := x(s_1,s_2)_{[2k]}
\]
no such $a$ exists. This means that the only prefixes of elements of $Z$
extending $p$ are\footnote{At most}
\[
p \circ \langle 0, s_2(p \circ \langle 0\rangle)\rangle
\quad \text{and} \quad
p \circ \langle 1, s_2(p \circ \langle 1\rangle)\rangle.
\]

Define $x \in \{0,1\}^{\mathbb{N}}$ by
\[
x_k = s_2(p \circ \langle 0\rangle) + s_2(p \circ \langle 1\rangle) \pmod{2},
\qquad
x_j = 0 \ \text{for } j \neq k.
\]
Then for every $p' \in Z$ extending $p$, we have
\[
c_x^k(p') \in
\Bigl\{
x_k \cdot 0 + s_2(p \circ \langle 0\rangle),
\;
x_k \cdot 1 + s_2(p \circ \langle 1\rangle)
\Bigr\}
=
\{ s_2(p \circ \langle 0\rangle) \}.
\]
Hence all elements of $C_x(Z)$ extending $c_x(p)$ agree on their
$k$-th coordinate. Consequently,
\[
C_x(Z) \cup \{ c_x(p) \circ \langle 1 - s_2(p \circ \langle 0\rangle) \rangle \}
\]
is a prefix code strictly containing $C_x(Z)$, contradicting its
maximality.

This contradiction shows that $s_1$ is a winning strategy for Player~1,
completing the proof.
\end{proof}
\\

We now extend the guiding idea of the previous theorem to the setting of
open winning sets on a full tree over any finite alphabet.
Conceptually, both the construction and the proof follow the same pattern
as in the binary case. However, the resulting criterion requires checking a substantially larger family of
auxiliary sequences.

Let $k \geq 2$ be fixed, and denote by $M_k$ the set of all functions
$\{0,1,\dots,k-1\} \to \{0,1,\dots,k-1\}$. Given
$x=\langle x_1,x_2,\dots\rangle \in (M_k)^{\mathbb{N}}$ and a position
$p=\langle a_1,a_2,\dots,a_n\rangle \in \{0,1,\dots,k-1\}^{\leq \mathbb{N}}$,
define
\[
c_x(p)
=
\langle c_x^1(p),\dots,c_x^{\lfloor \mathrm{len}(p)/2 \rfloor}(p) \rangle,
\]
where for each $i \leq \lfloor \mathrm{len}(p)/2 \rfloor$,
\[
c_x^i(p) := x_i(a_{2i-1}) + a_{2i} \pmod{k}.
\]
As in the binary case, the role of the map $c_x(\cdot)$ is to send
positions consistent with different strategies of Player~1 to the same codewords.
\begin{theorem}
\label{theorem: max prefix equivalent finite alphabet}
Player~1 has a winning strategy in the game $(T,W)$, where
\[
T = \{0,1,\dots,k-1\}^{\leq \mathbb{N}}, \qquad
W = \bigcup_{p \in Z} [T_p],
\]
and $Z \subset T$ is a minimal-size set, if and only if for every
$x \in (M_k)^{\mathbb{N}}$, the set
\[
C_x(Z) := \{ c_x(p) : p \in Z \}
\]
is a maximal prefix code over the alphabet $\{0,1,\dots,k-1\}$.
\end{theorem}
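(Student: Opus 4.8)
The plan is to mirror the two-step template of the binary case (Theorem~\ref{thm:binary-tree-prefix}), the only genuinely new point being that the scalar auxiliary sequence $x_k\in\{0,1\}$ must be replaced by an arbitrary sequence of functions $x_i\in M_k$. Throughout I may assume, by Lemma~\ref{lemma:win in open mean finite win} and Lemma~\ref{lemma: can ssume even lengths}, that $Z$ is finite and consists of even-length positions, so that $c_x$ sends each $p\in Z$ to a word of length exactly $\mathrm{len}(p)/2$ and the floors disappear.

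For the forward direction, suppose Player~1 has a winning strategy $s_1$. As in the binary proof, the minimal-size hypothesis together with Lemma~\ref{lemma: necceserry open condition lengths} forces $Z=Z_{s_1}$, i.e. every position of $Z$ is consistent with $s_1$. I then show that $C_x(Z)$ is a prefix code for every $x$: if $c_x(p)\preceq c_x(p')$ for $p,p'\in Z$, I examine the first index at which $p$ and $p'$ disagree. A disagreement at an even index $2i$ is impossible, because the earlier odd entries agree, so $x_i(a_{2i-1})=x_i(a'_{2i-1})$, and then the coordinate identity $x_i(a_{2i-1})+a_{2i}\equiv x_i(a'_{2i-1})+a'_{2i}\pmod k$ forces $a_{2i}=a'_{2i}$; a disagreement at an odd index $2i-1$ is impossible, because $a_{2i-1}$ and $a'_{2i-1}$ are both the action prescribed by $s_1$ at the common position $\langle a_1,\dots,a_{2i-2}\rangle$. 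Hence $p\preceq p'$, contradicting prefix-freeness of $Z$ unless $p=p'$; so $C_x(Z)$ is a prefix code, and the same argument gives injectivity of $c_x$ on $Z$. Since $\mathrm{len}(c_x(p))=\mathrm{len}(p)/2$, the minimal-size identity yields $\sum_{c\in C_x(Z)}k^{-\mathrm{len}(c)}=\sum_{p\in Z}k^{-\mathrm{len}(p)/2}=1$, and the Kraft equality criterion for maximal codes (\cite{maxprefixcodeiswinning}) makes $C_x(Z)$ maximal. This direction is routine.

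For the converse, I define the greedy strategy $s_1$ which at each even position $p$ plays an action $a$ such that $p\circ\langle a,b\rangle$ is a prefix of an element of $Z$ for \emph{every} $b\in\{0,\dots,k-1\}$ (a \emph{good action}), defaulting to $0$ if none exists. If a good action exists at every position reachable under $s_1$, then, $Z$ being finite of bounded length, the play is forced into $Z$ and $s_1$ wins; so I assume $s_1$ is not winning and pick a reachable position $p$ of minimal even length $2m$ with no good action. By minimality $p$ is reached through good actions, so $p$ is a prefix of some element of $Z$ while $p\notin Z$; thus $p$ is a proper prefix of some $p_0\in Z$, and $c_x(p)$ is a proper prefix of the codeword $c_x(p_0)$. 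Writing $B(a)=\{\,b : p\circ\langle a,b\rangle \text{ is a prefix of an element of } Z\,\}$, the absence of a good action means $|B(a)|\le k-1$ for all $a$. The key new ingredient, replacing the binary alignment $x_k=\beta(0)+\beta(1)$, is the following \emph{translate-avoidance} observation: for any target $v^{*}$ one can choose $x_{m+1}\in M_k$ with $x_{m+1}(a)\notin v^{*}-B(a)\pmod k$ for every $a$ — possible precisely because $|B(a)|\le k-1<k$ — so that $v^{*}\notin\bigcup_a\bigl(x_{m+1}(a)+B(a)\bigr)$ and no element of $Z$ \emph{extending} $p$ has $(m+1)$-st coordinate $v^{*}$. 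I would then adjoin $c_x(p)\circ\langle v^{*}\rangle$ to $C_x(Z)$ and derive a contradiction with maximality, using the reformulation (valid since $Z$ is minimal-size) that $C_x(Z)$ is maximal if and only if $c_x$ is injective on $Z$ and $C_x(Z)$ is a prefix code.

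The hard part is the control of \emph{ghost} positions. Because each pair-map $(a,b)\mapsto x_i(a)+b$ is $k$-to-$1$, there may exist $p'\in Z$ with $c_x(p)\preceq c_x(p')$ yet $p\not\preceq p'$, and such a $p'$ could carry $(m+1)$-st coordinate $v^{*}$, blocking the adjunction of $c_x(p)\circ\langle v^{*}\rangle$. In the binary setting this obstruction is invisible, since the scalar auxiliary sequence forces the two surviving branches to collapse onto a single value. To handle it in general I expect to spend the remaining freedom in the auxiliary sequence: choosing the earlier functions $x_1,\dots,x_m$ along the (strategy-consistent) path to $p$ so as to separate $c_x(p)$ from the images of ghost positions, and combining this with the local Kraft deficiency $\sum_{p'\in Z,\ p\preceq p'}k^{-(\mathrm{len}(p')/2-m)}<1$ at $p$, so that this deficiency necessarily surfaces — through the reformulation above — either as a prefix collision or as a coincidence $c_x(q)=c_x(q')$ with $q\ne q'$. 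Showing that some single choice of $x$ makes the deficiency visible rather than letting it be masked by ghosts is the crux of the whole argument.
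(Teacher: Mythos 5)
Your forward direction is correct, and it is essentially the paper's argument (the paper writes it out only in the binary case, Theorem~\ref{thm:binary-tree-prefix}, and declares the general case analogous); your ``translate-avoidance'' choice of $x_{m+1}$ in the converse is also exactly the paper's device (the paper takes $v^{*}=0$ and $x(i)=-a_i$ with $a_i\notin B(i)$). The genuine gap in your proposal is precisely the one you name and leave open: the ghosts. To conclude that $C_x(Z)\cup\{c_x(p)\circ\langle v^{*}\rangle\}$ is a prefix code one must exclude positions $p'\in Z$ with $p\not\preceq p'$ whose image is comparable with $c_x(p)\circ\langle v^{*}\rangle$, and nothing in the construction controls these. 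You should know that the paper's own proof simply elides this point: it asserts that \emph{every} element of $C_x(Z)$ extending $c_x(p)$ has nonzero next coordinate, having verified this only for images of positions of $Z$ that extend $p$. So your refusal to make that leap is not excessive caution; it is the correct reading of where the argument stands.

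The deeper point is that no expenditure of the remaining freedom in $x_1,\dots,x_m$ can close the gap, because the ``if'' direction of the theorem is false as stated. Let $k=2$ and
\[
Z=\bigl\{\langle a_1,a_2,0,a_1\rangle : a_1,a_2\in\{0,1\}\bigr\}
=\bigl\{\langle 0,0,0,0\rangle,\langle 0,1,0,0\rangle,\langle 1,0,0,1\rangle,\langle 1,1,0,1\rangle\bigr\}.
\]
This $Z$ is finite, prefix-free, consists of even-length positions, and is minimal-size since $4\cdot 2^{-2}=1$. For any $x\in (M_2)^{\mathbb N}$,
\[
c_x\bigl(\langle a_1,a_2,0,a_1\rangle\bigr)=\bigl\langle\, x_1(a_1)+a_2,\; x_2(0)+a_1 \,\bigr\rangle,
\]
and as $(a_1,a_2)$ ranges over $\{0,1\}^{2}$ these four words are distinct and exhaust $\{0,1\}^{2}$; hence $C_x(Z)=\{0,1\}^{2}$ is a maximal prefix code for \emph{every} $x$ (the same computation with scalar $x$ refutes the binary Theorem~\ref{thm:binary-tree-prefix} as well). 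Yet Player~2 wins this game: she plays $a_4=1-a_1$ at her second move, and since every element of $Z$ has length $4$ with fourth entry equal to $a_1$, the play never enters $W$. Concretely, at the failing position $p=\langle 0,b\rangle$ the only continuation inside $Z$ is $\langle 0,0\rangle$, but the word one would like to adjoin is already present in $C_x(Z)$, contributed by the ghosts $\langle 1,a_2,0,1\rangle$, whatever $x$ and $v^{*}$ are; the local Kraft deficiency you hoped to expose is masked by ghosts for \emph{all} auxiliary sequences simultaneously. Thus your ``crux'' is not a missing lemma but an actual obstruction, and the correct outcome of your analysis is a counterexample, not a proof. The equivalence survives only under an additional hypothesis forcing all of $Z$ to be consistent with a single strategy of Player~1 (e.g.\ $Z=Z_{s_1}(C)$ as in Lemma~\ref{lemma:create winning set from max prefix code}): then any two positions of $Z$, and any position reached by that strategy, first disagree at a Player~2 coordinate, so with $x_i=\mathrm{id}$ off the critical level their images are incomparable, ghosts cannot exist, and both your argument and the paper's go through.
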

\begin{proof}
The argument follows the same scheme as in the binary case. We therefore
focus on the nontrivial direction.

Assume that for every $x \in (M_k)^{\mathbb{N}}$ the set $C_x(Z)$ is a maximal
prefix code. We define a strategy $s_1$ for Player~1 as follows. For every
even-length position $p \in T$, let
\[
s_1(p) =
\begin{cases}
a \in \{0,1,\dots,k-1\}, & \text{if for all } i \in \{0,1,\dots,k-1\},\;
p \circ \langle a,i\rangle \text{ is a prefix} \\
& \text{of some position in } Z, \\
0, & \text{if no such } a \text{ exists}.
\end{cases}
\]
$s_1$ is a well-defined strategy.

We claim that $s_1$ is a winning strategy for Player~1. To this end, it
suffices to show that for every strategy $s_2$ of Player~2 and for every
even-length position $p$ arising along the play $x(s_1,s_2)$ (and which is not already in $W$), there exists
an action $a \in \{0,1,\dots,k-1\}$ such that
\[
\forall i \in \{0,1,\dots,k-1\}, \quad
p \circ \langle a,i\rangle
\]
is a prefix of some position in $Z$. Since every continuation remains a prefix of some element of $Z$, the resulting play remains in $W$.

Suppose, to the contrary, that there exists a strategy $s_2$ of
Player~2 and a minimal $t \in \mathbb{N}$ such that
\[
p := x(s_1,s_2)_{[2t]}
\]
fails to satisfy this property. Then for every $i \in \{0,1,\dots,k-1\}$
there exists $a_i \in \{0,1,\dots,k-1\}$ such that
$p \circ \langle i,a_i\rangle$ is not a prefix of any position in $Z$.

Define a sequence $x \in (M_k)^{\mathbb{N}}$ by
\[
x_t(i) := -a_i \pmod{k},
\qquad
x_j := \mathrm{id} \quad \text{for all } j \neq t.
\]
Now consider any position $p' \in Z$ having $p$ as a prefix. By construction,
the $t$-th coordinate of $c_x(p')$ satisfies
\[
c_x^t(p') = x_t(i) + a_i \not\equiv 0 \pmod{k}.
\]
Consequently, every element of $C_x(Z)$ extending $c_x(p)$ has a nonzero
$t$-th coordinate. It follows that
\[
C_x(Z) \cup \{\, c_x(p) \circ \langle 0 \rangle \,\}
\]
is still a prefix code, strictly containing $C_x(Z)$, contradicting the
assumption that $C_x(Z)$ is maximal.

This contradiction shows that such a position $p$ cannot exist, and hence
$s_1$ is a winning strategy for Player~1.
\end{proof}
\begin{remark}
    Note that the number of auxiliary sequences that must actually be examined is smaller than indicated in the previous theorem. For instance, it suffices to consider a single equivalence class of $M_k$, namely the one determined by the value at $0$. Substituting $k=2$ then yields Theorem~\ref{thm:binary-tree-prefix}.
\end{remark}

We now turn to the proofs of the main results. Before doing so, we briefly review free groups and Nielsen–Schreier theory, which will play a central role in the arguments that follow.

\section{Introduction to Nielsen-Schreier Theory}
\label{section:Introduction To Nielsen-Schreier Theory}
We begin with a brief survey of free groups and their graph-theoretic representations.

\begin{definition}
A \emph{free group} $F$ of rank $n$ is the group of all words in a given generating set $\mathcal{A} = \{ a_1, \dots, a_n \}$ with no relations among the generators. That is,
\[
F \equiv F(\mathcal{A}) = \langle a_1, \dots, a_n \mid \ \rangle.
\]
\end{definition}

One motivation for considering free groups in the context of games on infinite trees is that elements of a free group can be naturally identified with the positions of an infinite directed tree (see Figure~1). 

In 1921, Jacob Nielsen~\cite{nielsen21} proved that any finitely generated subgroup of a free group is itself free. Schreier~\cite{Schreier27} later extended this result, providing a formula for the minimal number of generators of a free subgroup. This extension led to the introduction of what are now known as \emph{Schreier graphs}. 

At the heart of the Nielsen-Schreier theory is a correspondence between free subgroups $H \leq F(\mathcal{A})$ and directed graphs with a distinguished vertex $o$, whose edges are labeled by elements of $\mathcal{A}$, such that each vertex has exactly one outgoing edge and exactly one incoming edge for every $a \in \mathcal{A}$.

To make this correspondence precise, we recall the definition of the Schreier coset graph.

\begin{definition}
Given a subgroup $H \leq F(\mathcal{A})$, the \emph{Schreier graph} 
\[
Sch(H \backslash F(\mathcal{A}), \mathcal{A}),
\] 
denoted simply $Sch(H)$ when the subgroup is clear from context, is the graph whose vertex set
\[
V(Sch(H)) = H \backslash F(\mathcal{A})
\] 
consists of the right cosets of $H$ in $F(\mathcal{A})$. Each vertex $Hw \in H \backslash F(\mathcal{A})$ is connected to $Hwa$ by a directed edge labeled $a$, for every $a \in \mathcal{A}$. We denote such an edge by
\[
Hw \xrightarrow{a} Hwa.
\]
\end{definition}

The Schreier graph $Sch(H)$ has a distinguished vertex corresponding to the coset $H$, which we denote by $o$. 

In Figure~1, we illustrate the Schreier graphs of two free subgroups of $F(\mathcal{A})$ with $\mathcal{A} = \{a,b\}$, one of which being the trivial subgroup, denoted $\langle \rangle$, containing only the empty word\footnote{The Schreier graph of the trivial subgroup is better known as the \emph{Cayley graph} of the free group.}.  

\medskip

The order of the Schreier graph satisfies
\[
|V(Sch(H))| = |H \backslash F(\mathcal{A})| = [F(\mathcal{A}):H],
\]
that is, it equals the index of the subgroup $H$ in $F(\mathcal{A})$. Moreover, a Schreier graph is \emph{regular}: every vertex has exactly $|\mathcal{A}|$ outgoing edges and $|\mathcal{A}|$ incoming edges (e.g. \textsection 4 in \cite{Schreier27}).

There is a natural one-to-one correspondence between words in the free group $F(\mathcal{A})$ and non-backtracking walks in $Sch(H)$~\cite{GRIGORCHUK20121340}. Under this correspondence, an element $w \in F(\mathcal{A})$ belongs to the subgroup $H \leq F(\mathcal{A})$ if and only if the associated walk in $Sch(H)$, starting at the distinguished vertex $o$, also terminates at $o$. For instance, the Schreier graph of the trivial subgroup is an infinite directed tree, so there are no nontrivial closed walks based at $o$, reflecting the fact that the subgroup contains only the empty word.

We next recall the definition of \emph{Stallings core graphs}~\cite{GRIGORCHUK20121340}. A \emph{hanging branch} in a Schreier graph is a subgraph that is isomorphic, as a directed graph, to an infinite tree whose root has degree 1. In Figure~1, for example, the Schreier graph $Sch(\langle b, aba, ab^{-1}a \rangle)$ contains two hanging branches, each rooted at a vertex connected to $o$ by an edge labeled $a$ (one incoming and one outgoing).

The \emph{core graph} of a subgroup $H \leq F(\mathcal{A})$, denoted $Core(H)$, is obtained by \say{trimming off} all hanging branches from the Schreier graph. More precisely, one removes all vertices of the hanging branches, except for their roots, along with all edges incident to these vertices.

\begin{figure}[htbp]
    \centering
    \includegraphics[width=0.9\linewidth]{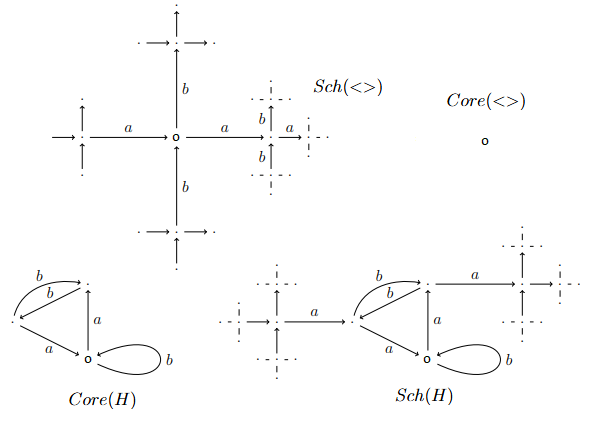}
    \caption{Illustration of the Schreier graph $\mathrm{Sch}(\langle\,\rangle)$, its core $\mathrm{Core}(\langle\,\rangle)$, the Schreier graph $\mathrm{Sch}(H)$, and its core $\mathrm{Core}(H)$ for the free group $F = F(\{a,b\})$ and the subgroup $H = \langle b,\, aba,\, ab^{-1}a \rangle$. The vertices of each Schreier graph are in bijection with the right cosets in $H \backslash F(\mathcal{A})$. In this example, the coset set decomposes as $\{H,\, Ha,\, Hab\} \;\cup\; \{Ha^{2}w,\; Ha^{-2}w' \mid w,w' \in F(\mathcal{A}) \text{ with no cancellation}\}$.}
    \label{fig:sch-graphs-example}
\end{figure}

The following proposition \cite{GRIGORCHUK20121340} characterizes finitely generated free subgroups in terms of the core graph. 
\begin{proposition} \label{proposition: core graph is finite}
A subgroup $H \leq F$ is finitely generated if and only if $Core(H)$ is a finite graph. 
\end{proposition}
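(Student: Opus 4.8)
The plan is to use the correspondence, recalled above, between elements of $H$ and reduced (non-backtracking) closed walks based at $o$ in $Sch(H)$, and to identify $Core(H)$ with the part of the Schreier graph carrying these walks. First I would record an intrinsic description of the core: an edge of $Sch(H)$ survives in $Core(H)$ if and only if it lies on some reduced closed walk based at $o$. This uses the regularity of the Schreier graph stated above---since every vertex has exactly $|\mathcal A|$ incoming and $|\mathcal A|$ outgoing edges, $Sch(H)$ has no vertices of degree one, so every maximal tree hanging off the graph is infinite and is therefore a hanging branch. Consequently the edges removed during trimming are exactly those lying in infinite trees attached at a single vertex, and such an edge can never occur on a reduced closed walk (a walk entering an infinite tree must backtrack to leave it), while every edge on a reduced loop based at $o$ is retained. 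In particular $Core(H)$ is connected (each of its edges lies on a loop through $o$) and contains no hanging branches.

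With this description in hand, I would identify $H$ with the set of reduced closed walks based at $o$, all of which lie in $Core(H)$ by the previous step. Regarding $Core(H)$ as a topological graph, this set of reduced loops is precisely the fundamental group $\pi_1(Core(H),o)$, which is free; when $Core(H)$ is finite and connected its rank equals the first Betti number $|E(Core(H))| - |V(Core(H))| + 1$. This immediately yields the converse direction: if $Core(H)$ is finite, then $H$ is free of finite rank, hence finitely generated.

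For the forward direction I would argue the contrapositive: if $Core(H)$ is infinite then $H$ is not finitely generated. Fix a spanning tree $T$ of the connected graph $Core(H)$ and recall the standard fact that the edges outside $T$ index a free basis of $\pi_1(Core(H),o)\cong H$. If there were only finitely many such non-tree edges, then letting $K$ be the finite subgraph consisting of these edges together with the $T$-geodesics joining their endpoints to $o$, the graph $Core(H)$ would agree with $T$ outside $K$, and would therefore be $K$ with a family of trees attached at single vertices. Since $Core(H)$ is infinite, at least one of these attached trees is infinite, i.e.\ a hanging branch---contradicting that $Core(H)$ has no hanging branches. Hence the number of non-tree edges, and so the rank of $H$, is infinite.

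The main obstacle is the forward direction, and specifically the step translating \emph{finitely many group generators} into \emph{finitely many edges of the core}; the honest content is that the core carries a free basis of $H$ indexed by its non-tree edges, combined with the structural fact (established in the first step) that the core has no hanging branches. An alternative, more constructive route for this direction is Stallings' folding: starting from a finite generating set, one wedges a single labelled loop per generator at a basepoint and iteratively folds pairs of edges sharing both a label and an initial vertex; the process terminates in a finite graph whose reduced loops are exactly $H$, and one verifies that this folded graph is $Core(H)$. Either way, the crux is ensuring that no finite-generation data is lost in passing between the algebraic and graph-theoretic pictures.
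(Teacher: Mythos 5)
The paper itself offers no proof of this proposition: it is quoted directly from Grigorchuk--Kaimanovich--Nagnibeda \cite{GRIGORCHUK20121340}, so there is no internal argument to compare against. Your proposal supplies the standard proof, and in substance it is correct: identify $H$ with the reduced closed walks at $o$, show these walks live in the core, use that the fundamental group of a connected graph is free on the edges outside a spanning tree, and run the spanning-tree count in both directions. Your alternative route via Stallings folding is in fact the one most consonant with the paper's own toolkit, since the paper introduces bouquet graphs and foldings precisely so that, in the proof of Theorem~\ref{corollary: using max prefix codes}, a finite generating set can be folded into the (finite) core; citing that machinery would make the forward direction immediate.

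Two points in your first step need tightening. First, you prove that trimmed edges cannot lie on a reduced loop at $o$, but the converse inclusion --- ``each of its edges lies on a loop through $o$'' --- is only asserted. It requires the structural fact that every component of the complement of the loop-union is a tree attached to it at a single vertex by a single edge (otherwise one manufactures a reduced loop through that component), and is infinite by regularity of $Sch(H)$; only then do these components coincide with the hanging branches that the trimming removes. Second, the paper's trimming definition is ambiguous in a way that makes your stated equivalence literally false unless repaired: a hanging branch may contain the basepoint. For $H=\langle b\,a\,b^{-1}\rangle\le F(\{a,b\})$, the based core is a ``lollipop'' (the edge $o\to Hb$ plus an $a$-loop at $Hb$), and the tail edge $o \to Hb$ together with the infinite trees hanging off $o$ forms an infinite tree whose root $Hb$ has degree one --- a hanging branch in the paper's sense --- so trimming \emph{all} hanging branches would delete $o$ and the tail edge, even though that edge lies on the reduced loop $b\,a\,b^{-1}$. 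You must therefore restrict trimming to branches not containing $o$ (equivalently, work throughout with the based core, the union of reduced loops at $o$, as you implicitly do); the proposition itself is insensitive to this choice since the tail is finite, but your $\pi_1$ argument needs $o$ to survive in the core. Finally, a small point in the forward direction: ``at least one of these attached trees is infinite'' presupposes that only finitely many trees are attached to the finite subgraph $K$, which follows from the degree bound $2|\mathcal A|$ at each vertex of the core.
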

We also introduce the concept of \emph{Stalling folding} \cite{Stallingfold}. Consider a directed graph, with edges labeled by elements of the alphabet $\mathcal{A}$, the process of Stallings folding includes identifying edges both labeled by the same element $a \in \mathcal{A}$ with the same endpoints and coinciding those edges. Consider a directed graph, with its edges labeled by elements of the alphabet $\mathcal{A}$. Two edges $e_1=\{v,u_1\} , \{v,u_2 \}$ labeled by the same element of $\mathcal{A}$, can be folded if they share a vertex $v$, and both exit or enter vertex $v$. Under those conditions a Stallings fold, is simply to contract the vertices $u_1,u_2$, or “gluing" the edges $e_1,e_2$ into a new edge, with the same label and direction; see Figure 2. 

\begin{figure}[]
    \centering
    \includegraphics[width=0.9\linewidth]{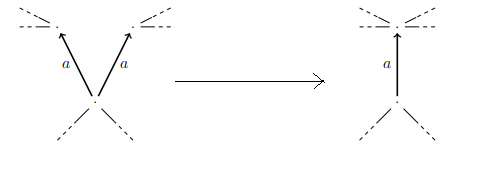}
    \caption{ A Stallings folding of two edges, labeled $a$ with the same starting vertex and direction}
\end{figure}

The primary use of Stallings folding in this work will be the fact that the core graph of a free subgroup can be obtained via a series of Stallings foldings starting at the bouquet graph, that is, a graph where each cycle corresponds to a generating word of the subgroup; see Figure 3. The process of Stallings folding is recursive and does not depend on the order in which the foldings are performed \cite{KAPOVICH2002608}.
\\
\begin{figure}[]
    \centering
    \includegraphics[width=0.9\linewidth]{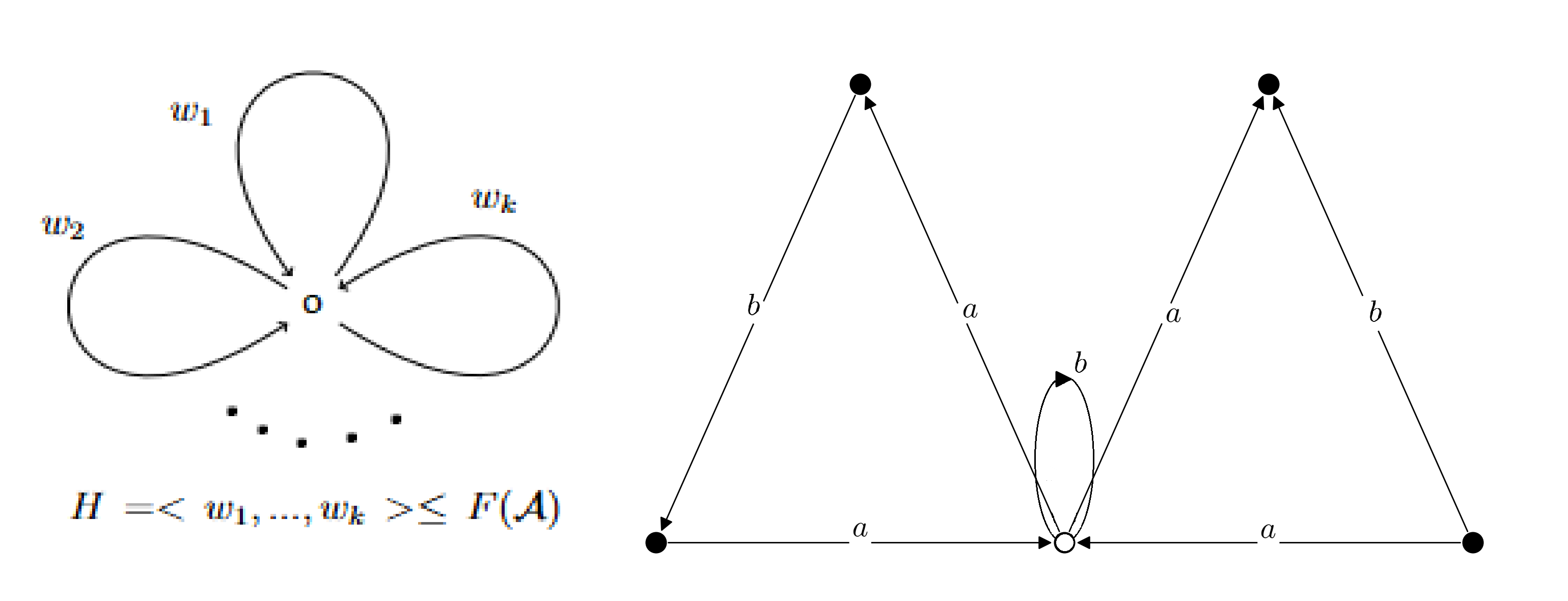}
    \caption{A diagram of a bouquet graph and the bouquet graph of the set from previous example $\{ aba, ab^{-1}a ,b\}$}
\end{figure}
\\
When all vertices of the core graph are full-degree (both entering and exiting), the core graph and the Schreier graph coincide, $Core(H)=Sch(H)$ \cite{KAPOVICH2002608}. The reader may verify that if it was not the case, then a hanging tree would have been removed, thus lowering the degree in the corresponding vertex. Alternatively, the statement could be formulated as follows.
\begin{proposition} \label{proposition: finite index mean core = sch}
    When $|\mathcal{A}|<\infty $ and $H\leq F(\mathcal{A})$ is finitely generated, $|V(Sch(H))| = [F(\mathcal{A}):H]< \infty$ if and only if the core graph is equal to the Schreier graph.
\end{proposition}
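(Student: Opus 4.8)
The plan is to prove Proposition~\ref{proposition: finite index mean core = sch} by establishing both implications through the combinatorial structure of the Schreier graph, using the fact that $Sch(H)$ is regular of degree $|\mathcal{A}|$ (every vertex has exactly $|\mathcal{A}|$ incoming and $|\mathcal{A}|$ outgoing edges) together with Proposition~\ref{proposition: core graph is finite}. The key observation is that $Core(H)$ is obtained from $Sch(H)$ precisely by trimming off hanging branches, so the two graphs coincide if and only if $Sch(H)$ has no hanging branches at all. I would therefore reduce the statement to: $Sch(H)$ is finite if and only if it contains no hanging branch, under the standing assumptions that $|\mathcal{A}|<\infty$ and $H$ is finitely generated.

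For the direction $[F(\mathcal{A}):H]<\infty \Rightarrow Core(H)=Sch(H)$, I would argue by contraposition. Suppose $Core(H)\neq Sch(H)$; then $Sch(H)$ contains a hanging branch, which by definition is isomorphic as a directed labeled graph to an infinite tree rooted at a degree-one vertex. Since a hanging branch has infinitely many vertices, and these vertices correspond to distinct cosets, we get $|V(Sch(H))|=\infty$, i.e. $[F(\mathcal{A}):H]=\infty$. Conversely, for $Core(H)=Sch(H)\Rightarrow [F(\mathcal{A}):H]<\infty$, I would invoke Proposition~\ref{proposition: core graph is finite}: since $H$ is finitely generated, $Core(H)$ is a finite graph; because $Core(H)=Sch(H)$, the Schreier graph itself is finite, and hence $|V(Sch(H))|=[F(\mathcal{A}):H]<\infty$.

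The main subtlety—and the step I expect to require the most care—lies in justifying that a full-degree vertex cannot sit on the root of any hanging branch, i.e. in pinning down the precise relationship between ``$Core(H)=Sch(H)$'' and ``every vertex of $Sch(H)$ has full degree.'' The trimming operation removes all vertices of a hanging branch except its root and all edges incident to the removed vertices; since the root of a hanging branch has degree one within the branch (one edge connecting it into the tree), deleting that branch strictly lowers the degree at the root in $Sch(H)$ unless the root was already full-degree from edges outside the branch. I would make this rigorous by arguing that in a regular graph of degree $|\mathcal{A}|$, the presence of a hanging branch forces infinitely many vertices of degree exactly $2$ (one incoming, one outgoing edge along the branch), and that such degree-deficient vertices are exactly what gets trimmed; hence $Core(H)=Sch(H)$ is equivalent to the absence of hanging branches, which for a $|\mathcal{A}|$-regular connected graph with finitely generated $H$ is equivalent to finiteness.

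I would close by noting that finiteness of $Sch(H)$ is equivalent to $|V(Sch(H))|=[F(\mathcal{A}):H]<\infty$ by definition of the vertex set as the coset space $H\backslash F(\mathcal{A})$, so the chain of equivalences completes the proof. The only genuine content beyond definitions is the equivalence ``no hanging branches $\Leftrightarrow$ finite graph,'' which rests on the regularity of the Schreier graph and the finite generation of $H$ (ensuring $Core(H)$ is finite), both of which are available from the preceding propositions.
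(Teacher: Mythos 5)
Your core two-direction argument is correct and is essentially the paper's own reasoning (the paper only sketches it, in the remark preceding the proposition, by citing Kapovich--Myasnikov and noting that removing a hanging tree lowers the degree at the root vertex). One direction: if $[F(\mathcal{A}):H]<\infty$ then $Sch(H)$ is finite, hence contains no hanging branch (these are infinite by definition), so trimming does nothing and $Core(H)=Sch(H)$; equivalently, your contrapositive. The other direction: finite generation gives $Core(H)$ finite by Proposition~\ref{proposition: core graph is finite}, so $Core(H)=Sch(H)$ forces $Sch(H)$ finite, i.e.\ $[F(\mathcal{A}):H]=|V(Sch(H))|<\infty$. Both steps, together with the identification $|V(Sch(H))|=[F(\mathcal{A}):H]$, are exactly the ingredients the paper relies on.

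However, your ``main subtlety'' paragraph contains a genuinely false claim, even though it does not sink the proof. You assert that a hanging branch forces ``infinitely many vertices of degree exactly $2$ (one incoming, one outgoing edge along the branch)'' and that ``such degree-deficient vertices are exactly what gets trimmed.'' Neither is right: $Sch(H)$ is regular, so \emph{every} vertex of $Sch(H)$ --- including every vertex of a hanging branch --- has $|\mathcal{A}|$ incoming and $|\mathcal{A}|$ outgoing edges; a hanging branch is an infinite tree of branching factor $2|\mathcal{A}|-1$ at its non-root vertices, not an infinite path (degree $2$ would occur only if $|\mathcal{A}|=1$). Moreover, the trimmed vertices are full-degree in $Sch(H)$; degree deficiency appears only \emph{after} trimming, at the branch roots, which are precisely the vertices that are \emph{not} removed --- this is the content of the paper's remark that deleting a hanging tree lowers the degree of the corresponding vertex of $Core(H)$. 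Fortunately, the step you were trying to shore up needs no such argument: since $Core(H)$ is by definition the result of trimming all hanging branches, $Core(H)=Sch(H)$ holds if and only if $Sch(H)$ has no hanging branch, and each branch contributes infinitely many distinct cosets. Deleting that one sentence leaves a correct proof.
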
 
We end the introduction to Nielsen-Schreier theory, by stating the following result, known as the Nielsen-Schreier index formula (which we will use in Theorem ~\ref{corollary: using max prefix codes}):
\begin{theorem} \label{theorem: index formula}
    if $G$ is a free group of rank $k$, and $H$ is a subgroup of finite index $[G : H] = d$, then $H$ is free of rank $d(k-1)+1$. 
\end{theorem}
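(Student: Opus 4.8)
The plan is to compute the rank of $H$ as the first Betti number of its core graph, using the Nielsen–Schreier machinery assembled above rather than arguing inside the group directly. Since $[F:H]=d<\infty$, Proposition~\ref{proposition: finite index mean core = sch} gives $Core(H)=Sch(H)$, so it suffices to analyze the finite labeled graph $Sch(H)$. This graph is connected (from the base vertex $o=H$ one reaches the coset $Hw$ by reading off the letters of $w$, using incoming $a$-edges for the inverse letters), it has exactly $|V(Sch(H))|=[F:H]=d$ vertices, and by regularity every vertex has precisely $k=|\mathcal{A}|$ outgoing edges. Counting each labeled edge once at its tail, the edge set therefore has cardinality $dk$.

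Next I would fix a spanning tree $\mathcal{T}$ of $Sch(H)$. As a tree on $d$ vertices it uses exactly $d-1$ edges, leaving $dk-(d-1)=d(k-1)+1$ edges outside $\mathcal{T}$. To each non-tree edge $e$, directed $u\xrightarrow{a}v$, I associate the reduced word $w_e$ obtained by reading the unique $\mathcal{T}$-path from $o$ to $u$, then the label $a$, then the unique $\mathcal{T}$-path from $v$ back to $o$. By the correspondence between elements of $F(\mathcal{A})$ and non-backtracking walks in $Sch(H)$, each $w_e$ traces a closed walk based at $o$ and hence lies in $H$; conversely, the standard Schreier construction shows that every element of $H$ is a product of the $w_e^{\pm1}$. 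Thus $H$ is generated by $d(k-1)+1$ elements, matching the claimed rank.

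It then remains to argue that the $w_e$ form a \emph{free} basis, so that the rank of $H$ equals their number and is not strictly smaller; this is the substantive point, and it is where freeness (Nielsen's theorem) genuinely enters. Because $Sch(H)=Core(H)$ is a core graph, it is \emph{reduced}: no Stallings fold applies, so reading a nonempty reduced product of the $w_e^{\pm1}$ never introduces a backtrack, and the associated closed walk is nontrivial exactly when the word is. Equivalently, collapsing the spanning tree $\mathcal{T}$ to the single vertex $o$ exhibits $Sch(H)$ as a bouquet of $d(k-1)+1$ labeled loops, whose group of reduced closed walks is free on those loops. I expect the main obstacle to be making precisely this last step rigorous — verifying that distinct reduced words in the $w_e$ yield distinct non-backtracking walks, equivalently that the tree-collapse induces an isomorphism from $H$ onto the free group on the non-tree edges — rather than the vertex/edge bookkeeping, which is immediate from regularity. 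Granting this identification, $H$ is free of rank $d(k-1)+1$, as asserted.
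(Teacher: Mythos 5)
The paper does not actually prove this statement: it is quoted as the classical Nielsen--Schreier index formula (citing Schreier) and used as a black box in Theorem~\ref{corollary: using max prefix codes}, so there is no in-paper argument to compare yours against line by line. Your route is the standard graph-theoretic proof, and the bookkeeping is correct: $Sch(H)$ is connected, has $[F:H]=d$ vertices and $dk$ labeled directed edges (one outgoing edge per generator per vertex), a spanning tree $\mathcal{T}$ uses $d-1$ of them, and the $dk-(d-1)=d(k-1)+1$ non-tree edges yield the Schreier generators $w_e\in H$.

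Two caveats. First, your opening appeal to Proposition~\ref{proposition: finite index mean core = sch} is both circular and unnecessary: as stated in the paper it presupposes that $H$ is finitely generated, which you do not yet know at that point; but you also do not need it, since $[F:H]=d<\infty$ already makes $Sch(H)$ a finite graph, and Schreier graphs are automatically folded (each vertex has exactly one outgoing and one incoming edge per label), which is all the ``reducedness'' your later argument uses. Second, the step you explicitly grant --- that the $w_e$ form a \emph{free} basis, equivalently that collapsing $\mathcal{T}$ induces an isomorphism of $H$ onto the free group on the non-tree edges --- is precisely the content of the Nielsen--Schreier theorem; as written, your argument establishes only that $H$ is generated by $d(k-1)+1$ elements, an upper bound on the rank rather than the claimed equality together with freeness. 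Your sketch for closing this is the right one (in a folded graph, reduced words lift to unique non-backtracking walks, and in the expanded form of a reduced product $w_{e_1}^{\pm 1}\cdots w_{e_m}^{\pm 1}$ the non-tree edges are never cancelled, so the walk is nontrivial), but it does need to be carried out; alternatively one can invoke covering-space theory directly: $Sch(H)$ is a connected $d$-fold cover of the rose with $k$ petals, so $H\cong\pi_1(Sch(H))$ is free of rank $E-V+1=dk-d+1=d(k-1)+1$.
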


\section{Algebraic Criteria Using Maximal Prefix Codes} \label{section:alg conditions}
We now return to the original goal of this paper, namely to provide algebraic
criteria for determining which player has a winning strategy. To this end, we
exploit algebraic properties of maximal prefix codes in order to derive a
strong necessary condition for Player~1 to win.

In particular, from Theorem~\ref{theorem: max prefix equivalent finite alphabet}
we infer the following necessary condition for the existence of a
winning strategy for Player~1.

\begin{theorem}
\label{corollary: using max prefix codes}
Let $T=\{0,1,\dots,k-1\}^{\leq \mathbb{N}}$, let $Z \subseteq T$, and let
\[
W=\bigcup_{p\in Z}[T_p]
\]
be a minimal-size open winning set. If there exists
$x\in (M_k)^{\mathbb{N}}$ such that for every subset
$Z' \subseteq Z$ of cardinality
\[
\left(\max_{p\in Z} \left\lfloor \frac{\mathrm{len}(p)}{2} \right\rfloor \right)
\cdot (|\mathcal{A}|-1) + 1
\]
the subgroup generated by $C_x(Z')$, that is $H=\langle C_x(Z') \rangle$, has infinite index in $F(\mathcal{A})$,
\[
[F(\mathcal{A}) : H] = \infty,
\]
then Player~2 has a winning strategy in the game $(T,W)$.
\end{theorem}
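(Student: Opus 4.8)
The plan is to argue by contraposition: assuming Player~1 has a winning strategy, I will produce, for \emph{every} $x\in(M_k)^{\mathbb{N}}$, a subset $Z'\subseteq Z$ of the stated cardinality with $[F(\mathcal{A}):\langle C_x(Z')\rangle]<\infty$, which contradicts the hypothesis; since open games are determined (Gale--Stewart), Player~2 then wins. So I fix $x$, set $C=C_x(Z)$, $H=\langle C\rangle$, and $L=\max_{p\in Z}\lfloor \mathrm{len}(p)/2\rfloor=\max_{c\in C}\mathrm{len}(c)$. By Theorem~\ref{theorem: max prefix equivalent finite alphabet}, $C$ is a maximal prefix code over $\mathcal{A}$, and (as in that proof) the map $c_x$ is injective on the minimal set $Z$, so subsets of $C$ pull back to subsets of $Z$. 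The whole argument thus reduces to two purely code-theoretic facts: a maximal prefix code generates a finite-index subgroup whose index is at most the maximal codeword length, and a basis-sized subset of the codewords already cuts out a finite-index subgroup.

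First I would establish that $H$ has finite index using core graphs. Since $C$ is finite, $\mathrm{Core}(H)$ is finite by Proposition~\ref{proposition: core graph is finite}. Maximality gives full out-degree: a vertex of the core graph is a coset $Hu$ with $u$ a proper prefix of a codeword, and for every letter $a$ the word $ua$ is again a codeword or a proper prefix of one, so reading $a$ from $Hu$ always produces an edge. In a finite folded graph the out-degree per label is then exactly $1$ at every vertex, so the number of $a$-labelled edges equals the number of vertices; since the in-degree per label is at most $1$, summing forces it to equal $1$ everywhere. Hence $\mathrm{Core}(H)=\mathrm{Sch}(H)$, and by Proposition~\ref{proposition: finite index mean core = sch} the index $[F:H]$ is finite.

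To bound the index by $L$, I would grade $\mathrm{Sch}(H)$ by $\phi\colon F(\mathcal{A})\to\mathbb{Z}$, $\phi(a)=1$ for all $a$. Since $\phi(H)=\langle \mathrm{len}(c):c\in C\rangle=g\mathbb{Z}$ with $g=\gcd\{\mathrm{len}(c)\}$, the residue $\phi(w)\bmod g$ is constant on cosets, so $\mathrm{Sch}(H)$ maps onto the cyclic graph $\mathbb{Z}/g$ with every edge raising the grade by one. Using that each codeword reads from the base vertex back to itself, and — crucially — that maximality forces every positive word of length at least $L$ to have a codeword prefix (so every coset is represented by a positive word of length less than $L$), I would show that two cosets of equal grade must coincide, i.e. this graded map is injective. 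Then $\mathrm{Sch}(H)$ is exactly the $\mathbb{Z}/g$-cycle and $[F:H]=g\le \min_{c}\mathrm{len}(c)\le L$. This collapse of each grade-class to a single vertex is the \emph{main obstacle}: full out-degree and finiteness are soft, but proving that no two distinct same-grade cosets survive the folding genuinely uses completeness of the prefix tree. I expect to run an induction on depth, starting from the observation that every internal node at depth $L-1$ has all of its children in $C$ and therefore folds, via its incoming edges into the base vertex, onto a single coset.

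Finally, by the Nielsen--Schreier index formula (Theorem~\ref{theorem: index formula}), $H$ is free of rank $r=[F:H](|\mathcal{A}|-1)+1\le L(|\mathcal{A}|-1)+1$. Fixing a spanning tree of $\mathrm{Sch}(H)$, the $r$ non-tree edges give a free basis of $H$ as loops through the tree; each codeword is a closed walk, and since the codewords generate $H$, every non-tree edge is traversed by some codeword. I would select, for each non-tree edge, a codeword traversing it, and let $Z'$ be the preimage under $c_x$; this gives $|Z'|\le r\le L(|\mathcal{A}|-1)+1$. The \emph{second obstacle} is to guarantee that $\langle C_x(Z')\rangle$ has finite index in $F(\mathcal{A})$ and not merely finite-index abelianization — the naive one-codeword-per-edge choice can fail, as $\langle a,b^2\rangle\le F(\{a,b\})$ shows. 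Here I expect the prefix structure to be decisive: unlike arbitrary generating sets, codewords of a maximal prefix code carry a canonical cancellation pattern (e.g. $bb=(ba)(aa)^{-1}(ab)$ for the full length-$2$ code) letting a basis-sized subset refold to the entire Schreier graph, and I would make this precise by verifying that the chosen walks recover all of $\mathrm{Sch}(H)$, so that $\langle C_x(Z')\rangle$ is finite index. Assembling the three steps yields the required $Z'$ and completes the contrapositive.
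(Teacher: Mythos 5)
Your contrapositive setup, the injectivity of $c_x$ on a minimal $Z$, and your first step (finite index of $H=\langle C_x(Z)\rangle$ via full out-degree from maximality plus the in-degree counting argument) are all sound and close in spirit to the paper. Your second step's claim that $\mathrm{Sch}(H)$ collapses to a cycle of length $g=\gcd\{\mathrm{len}(c)\}$ is in fact true (though you only sketch it; one clean route is: the Schreier graph is a finite permutation automaton in which every infinite positive path from the base point returns to it, and such an automaton is forced, vertex by vertex from a topological ordering of the complement of the base point, to be a single cycle on which all letters act identically), and it does give the needed bound $\mathrm{rank}(H)\le L(|\mathcal{A}|-1)+1$. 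The genuine gap is your third step. Choosing, for each non-tree edge of a spanning tree of $\mathrm{Sch}(H)$, one codeword whose walk traverses it does \emph{not} guarantee that the chosen codewords generate a finite-index subgroup, and the criterion you propose to verify --- that ``the chosen walks recover all of $\mathrm{Sch}(H)$'' --- is not the right invariant: the chosen walks can cover every edge of $\mathrm{Sch}(H)$ while $\langle C_x(Z')\rangle$ still has infinite index.

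Concretely, take $\mathcal{A}=\{0,1\}$ and the maximal prefix code $C=\{00,01,10,1100,1101,1110,1111\}$. Here $H=\langle C\rangle$ has index $2$, and $\mathrm{Sch}(H)$ has vertices $*,P$ with edges $*\xrightarrow{0}P$, $*\xrightarrow{1}P$, $P\xrightarrow{0}*$, $P\xrightarrow{1}*$. Take the spanning tree $\{*\xrightarrow{0}P\}$, so the non-tree edges are $e_1=*\xrightarrow{1}P$, $e_2=P\xrightarrow{0}*$, $e_3=P\xrightarrow{1}*$. The assignment $e_1\mapsto 1111$, $e_2\mapsto 00$, $e_3\mapsto 1101$ is legitimate under your rule: the three codewords are distinct, each traverses its assigned edge, and their walks together cover all four edges of $\mathrm{Sch}(H)$. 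Yet folding the bouquet of $\{1111,\,00,\,1101\}$ terminates in a folded graph with $5$ vertices and $7$ edges containing vertices of defective degree, so by Proposition~\ref{proposition: finite index mean core = sch} the subgroup $\langle 1111,00,1101\rangle$ is free of rank $3$ and of \emph{infinite} index in $F(\mathcal{A})$. So an unstructured basis-sized selection can fail, and no a posteriori verification rescues it; the selection itself must be engineered. This is exactly where the paper's proof does its real work: it builds $Z'$ not from a spanning tree of $\mathrm{Sch}(H)$ but from the prefix tree of the code --- a maximal-length codeword together with all of its siblings, then, moving up one level at a time, one codeword through each sibling subtree --- and proves finite index by showing that folding this particular nested family forces full out-degree at every vertex (full in-degree then follows from the same counting argument you use). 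Your steps 1--2 can be kept, but step 3 must be replaced by such a structured choice.
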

\begin{proof}
Suppose, that Player~1 has a winning strategy.
Then, by Theorem~\ref{theorem: max prefix equivalent finite alphabet}, for every
$x \in (M_k)^{\mathbb{N}}$, the set $C_x(Z)$ is a maximal prefix code. In
particular, the subgroup $\langle C_x(Z) \rangle$ has finite index in
$F(\mathcal{A})$.

Moreover, every maximal prefix code contains a subset of at most 
\[
\big(\max_{c \in C_x(Z)} \mathrm{len}(c)\big)(|\mathcal{A}|-1) + 1
\]
elements that generate the group. To see this, let $c_x(p_1) = \langle a^1_1,\dots,a^1_n \rangle$
be a word of maximal length in $C_x(Z)$. By maximality, for every $a \in \mathcal{A}$,
the word $\langle a^1_1, \dots, a^1_{n-1}, a \rangle$ must belong to $C_x(Z)$
(otherwise one could extend the code). Iteratively, for each $a \in \mathcal{A}$,
there exists $p_2 \in Z$ such that $\langle a^1_1, \dots, a^1_{n-2}, a \rangle$
is a prefix of some $c_x(p_2)$. Continuing this construction yields a
subset $Z'$ of the desired size. We claim that $C_x(Z')$ is a generating set of a finite index free subgroup, thus finishing the proof.

\medskip

Denote $H=\langle C_x(Z') \rangle$.
We show that every vertex in $Core(H)$ has full degree, i.e.,  
\[
deg_{\mathrm{out}}(v) = deg_{\mathrm{in}}(v) = |\mathcal{A}|
\qquad \text{for all } v \in Core(H),
\]
which implies that $Core(H)=Sch(H)$. Since $H$ is finitely generated, this forces the Schreier graph to be finite (see Proposition~\ref{proposition: finite index mean core = sch}), and therefore $[F(\mathcal{A}) : H] = |Sch(H)| < \infty$, as required.

\medskip 

Let $v $ be any vertex of $Core(H)$. Such vertices arise as points lying on the
“generator cycles” in the bouquet before folding. After folding firstly the elements 
\[
\langle a^1_1, \dots, a^1_{n-1}, a \rangle , \ \ a \in \mathcal{A}
\]
We notice that the vertex corresponding to \[
\langle a^1_1, \dots, a^1_{n-1} \rangle
\] 
has full exiting degree. After folding the additional elements which have 
\[
\langle a^1_1, \dots, a^1_{n-2}, a \rangle , \ \ a \in \mathcal{A}
\]
as prefixes, we observe that the number of vertices in the graph which do not have full exiting degree is reduced by (at least) one. Thus, after iteratively we fold all the remaining elements in $C_z(Z')$ we get a graph with full \emph{exiting} degree, that is, we obtain $deg_{\mathrm{out}}(v) = |\mathcal{A}|$ for all $v\in Core(H)$.

\smallskip

To see that every vertex also has full \emph{entering} degree, note first that $Core(H)$ is finite (Proposition~~\ref{proposition: core graph is finite}).  
Since the total incoming degree equals the total outgoing degree,
\[
\sum_{v} deg_{\mathrm{in}}(v)
  = \sum_{v} deg_{\mathrm{out}}(v)
  = |Core(H)| \cdot |\mathcal{A}|,
\]
and since each $deg_{\mathrm{in}}(v) \le |\mathcal{A}|$, it follows that equality must hold at every vertex.  
Thus every vertex has full degree on both sides, and therefore $Core(H) = Sch(H)$.  
This completes the proof.
\end{proof}

\begin{remark}
Notice that the proof of the previous Theorem, showed that in fact, we have at least $|\{p \in Z : \lfloor \mathrm{len}(p)/2 \rfloor \text{ is maximal} \}|/|\mathcal{A}|\geq 1$ different ways to choose $Z'$, as the initial choice of the maximal length word was arbitrary.    
\end{remark}

The following Corollary also follows immediately from the classical result that every maximal prefix code generates a free subgroup of finite index.

\begin{corollary}
    Under the same conditions as in the previous Theorem, if there exists $x \in (M_k)^\N$ such that  $[F(\mathcal{A}):\langle C_x(Z) \rangle]= \infty$, then Player~2 has a winning strategy.
\end{corollary}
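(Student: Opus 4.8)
The plan is to deduce this corollary directly from Theorem~\ref{corollary: using max prefix codes} by contraposition, using the classical fact that a maximal prefix code generates a finite-index free subgroup. Suppose, toward a contradiction, that Player~1 has a winning strategy in $(T,W)$. Since $Z$ is a minimal-size open winning set, Theorem~\ref{theorem: max prefix equivalent finite alphabet} applies and tells us that for \emph{every} $x \in (M_k)^{\mathbb{N}}$ the set $C_x(Z)$ is a maximal prefix code over $\{0,1,\dots,k-1\}$. In particular this holds for the specific $x$ furnished in the hypothesis.

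The key step is then to invoke the classical theorem that the subgroup generated by a maximal prefix code has finite index in the free group. This is already implicitly established inside the proof of Theorem~\ref{corollary: using max prefix codes}: there one shows that for a maximal prefix code $C$, the core graph $Core(\langle C \rangle)$ has full entering and exiting degree at every vertex, hence equals the (finite) Schreier graph, so that $[F(\mathcal{A}):\langle C\rangle] = |Sch(\langle C\rangle)| < \infty$. Applying this to $C = C_x(Z)$ for the hypothesized $x$ gives
\[
[F(\mathcal{A}):\langle C_x(Z)\rangle] < \infty,
\]
which directly contradicts the standing assumption that $[F(\mathcal{A}):\langle C_x(Z)\rangle] = \infty$. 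Since the assumption that Player~1 wins leads to a contradiction, Player~2 must have a winning strategy (the game is determined by the Gale--Stewart theorem, as $W$ is open).

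I expect no genuine obstacle here, since the corollary is a strictly weaker and cleaner consequence of the preceding theorem: it replaces the quantification over all subsets $Z' \subseteq Z$ of the bounded cardinality by the single set $Z' = Z$, and correspondingly strengthens the hypothesis to concern $\langle C_x(Z)\rangle$ itself. The only point requiring care is to confirm that the finite-index conclusion for a maximal prefix code is legitimately available to us: it is, being exactly the content extracted in the proof of Theorem~\ref{corollary: using max prefix codes}, and it is the ``classical result'' referenced in the statement. Thus the entire argument is a one-line contrapositive application of that theorem specialized to $Z' = Z$.
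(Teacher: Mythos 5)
Your proposal is correct and matches the paper's intended argument: the paper proves this corollary exactly by combining Theorem~\ref{theorem: max prefix equivalent finite alphabet} (Player~1 winning implies $C_x(Z)$ is a maximal prefix code for every $x$) with the classical fact that a maximal prefix code generates a finite-index subgroup, then concluding via open determinacy. Your observation that the finite-index fact can be extracted from the core-graph argument in the proof of Theorem~\ref{corollary: using max prefix codes} is a valid way to make the ``classical result'' self-contained, so there is no gap.
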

Also as a Corollary we get also Theorem~\ref{Main result: index property}.

\medskip

\begin{proof} [Proof of Theorem \ref{Main result: index property}]
    Let $T=\{0,1,\dots,k-1\}^{\leq \mathbb{N}}$, let $Z \subseteq T$
    Notice that for the constant vector $0^\N \in (M_k)^\N$, which is such that $0^\N_i(j)=0$ for every $i \in \N , j\in \{0,\dots,k-1\}$, we have $C_{0^\N}(Z)=\hat{Z}$
\end{proof}
\begin{remark}    
Observe that the index of $\langle C_x(Z) \rangle$ (which is also the size of the Schreier graph) is bounded by the maximal
length of elements in $C_x(Z)$:
\[
\max_{p \in Z} \mathrm{len}(c_x(p)) = \max_{p \in Z} \left\lfloor \frac{\mathrm{len}(p)}{2} \right\rfloor.
\]

By the Nielsen-Schreier formula (Theorem~\ref{theorem: index formula}), the
rank of a finite-index subgroup $H \leq F(\mathcal{A})$ satisfies
\[
\mathrm{rank}(H) = [F(\mathcal{A}):H] \cdot (|\mathcal{A}|-1) + 1.
\]
Applied to $H = \langle C_x(Z) \rangle$, it follows that
\[
rank(\langle C_x(Z) \rangle) \le [F(\mathcal{A}) : \langle C_x(Z) \rangle] \cdot (|\mathcal{A}|-1) + 1
\leq \max_{p \in Z} \left( \left\lfloor \frac{\mathrm{len}(p)}{2} \right\rfloor \right) \cdot (|\mathcal{A}|-1) + 1.
\]
Moreover, the size of the subset $Z'$ in the Theorem is tight (as can be seen in the case where all the positions in $Z$ have the same length).
\end{remark}

\medskip

Two natural questions arise at this point. First, can our results concerning the finite index condition be extended to the case of games on non-full trees? Second, is this condition optimal, in the sense that it cannot be strengthened in general? The finite index property \cite{FiniteIndexProperty} provides an affirmative answer to both questions. We formulate this answer in the form of the following lemma, using the same notation and terminology as in \cite{FiniteIndexProperty}. We begin by recalling the necessary definitions.

\begin{definition}
\begin{enumerate}
    \item A \emph{bifix code} is a prefix code with the additional property that no word in the code is a suffix of another.
    
    \item Let $\mathcal{A}$ be a finite alphabet and let $S \subset \mathcal{A}^{\le \mathbb{N}}$. We say that $S$ is \emph{uniformly recurrent} if the following two conditions hold:
    \begin{enumerate}
        \item (\emph{Closure under subwords})  
        Whenever a word $w$ belongs to $S$, every subword of $w$ also belongs to $S$.
        
        \item (\emph{Uniform recurrence})  
        For every word $w \in S$, there exists an integer $N \geq 1$ such that every word $w' \in S$ with
        $len( w')  \geq N$ contains $w$ as a subword.
    \end{enumerate}
    
    \item For a word $w \in S$, define
    \[
    L(w) = \{ a \in \mathcal{A} \mid aw \in S \}, \qquad
    R(w) = \{ b \in \mathcal{A} \mid wb \in S \}.
    \]
    We associate to $w$ an undirected bipartite graph $G(w)$ with vertex set $L(w) \sqcup R(w)$, where an edge connects $a \in L(w)$ to $b \in R(w)$ if and only if $awb \in S$. If $G(w)$ is a tree for every word $w \in S$, we say that $S$ is a \emph{tree set}.
\end{enumerate}
\end{definition}

Recall that for a position $p=\langle a_1,\dots,a_n\rangle$ we denote
\[
\hat{p} = \langle a_2,\dots,a_{\lfloor n/2 \rfloor} \rangle .
\]

\begin{lemma}
Let $S$ be a uniformly recurrent tree set containing $\mathcal{A}$, and let \footnote{With respect to any fixed strategy $s_1\in S_1(\mathcal{A}^{\le \N})$.} $T = Z(S) \subset \mathcal{A}^{\le \mathbb{N}}$. Let $Z \subset T$ be such that $\hat{Z}$ is a bifix code, and let
\[
W = \bigcup_{p \in Z} [T_p]
\]
be the corresponding open subset of $T$. Then Player~1 has a winning strategy in the game $(T,W)$ if and only if $\hat{Z}$ is a basis of a finite index subgroup of the free group $F(\mathcal{A})$. In particular,
\[
[F(\mathcal{A}) : \langle \hat{Z} \rangle] < \infty .
\]
\end{lemma}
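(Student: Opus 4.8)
The plan is to translate the game-theoretic assertion \say{Player~1 has a winning strategy in $(T,W)$} into the purely combinatorial statement that $\hat Z$ is an $S$-maximal bifix code, and then to invoke the finite index basis property for uniformly recurrent tree sets from \cite{FiniteIndexProperty} as a black box. That result asserts that a finite bifix code $X\subseteq S$ is $S$-maximal if and only if it is a basis of a finite-index subgroup of $F(\mathcal A)$, with index equal to the $S$-degree of $X$; once the translation is in place, the lemma follows by reading this equivalence in both directions.

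First I would analyze the structure of $T=Z(S)=Z_{s_1}(S)$. By definition, every position in $T$ has its odd (Player~1) coordinates prescribed by $s_1$, so at each even-length position there is exactly one legal Player~1 move; hence the only strategy available to Player~1 is the embedded copy of $s_1$, and a play is determined entirely by Player~2's choices. These choices build a right-infinite word $y=\langle c_1,c_2,\dots\rangle$ all of whose finite prefixes lie in $S$. Since membership $p\in Z$ forces the Player~1 coordinates to agree with $s_1$ automatically, the play lies in $W$ precisely when some $\hat p\in\hat Z$ is a prefix of $y$. Therefore Player~1 wins if and only if every such word $y$ has a prefix in $\hat Z$. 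Using Lemma~\ref{lemma:win in open mean finite win} I may first pass to a finite subset of $Z$ without changing the winning player, so that $\hat Z$ is a finite code; combined with the hypothesis that $\hat Z$ is a bifix code contained in $S$, the condition \say{every $y$ has a prefix in $\hat Z$} is exactly the statement that $\hat Z$ is an $S$-maximal prefix code.

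Next I would upgrade $S$-maximal prefix to $S$-maximal bifix. Because $S$ is uniformly recurrent, hence recurrent, a finite bifix code contained in $S$ is $S$-maximal as a bifix code if and only if it is $S$-maximal as a prefix code; this standard fact identifies the winning condition with $S$-maximality of $\hat Z$ as a bifix code. At this point all hypotheses of the finite index basis property hold: $S$ is a uniformly recurrent tree set containing $\mathcal A$, and $\hat Z$ is a finite $S$-maximal bifix code. Applying \cite{FiniteIndexProperty} yields that $\hat Z$ is a basis of a subgroup of $F(\mathcal A)$ whose index equals its $S$-degree, and in particular $[F(\mathcal A):\langle\hat Z\rangle]<\infty$. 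The reverse implication follows by the same equivalence, since a basis of a finite-index subgroup is, by the cited theorem, an $S$-maximal bifix code, which forces every $y$ to meet $\hat Z$ and hence makes $s_1$ winning.

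The main obstacle is the combinatorial translation carried out in the second step, and specifically the careful verification that \say{Player~1 can force the play into $W$} coincides with $S$-maximality of $\hat Z$. One must check that the legality of Player~2's moves depends only on the projection $\hat p\in S$ (so that Player~1's forced moves neither help nor hinder), that the reduction to a finite $Z$ preserves the identity of the winning player, and that recurrence of $S$ genuinely equates $S$-maximal prefix with $S$-maximal bifix. The deep algebraic content—the equivalence between $S$-maximality and the finite-index basis property—is imported wholesale from \cite{FiniteIndexProperty}, so no independent Nielsen–Schreier argument is required once the dictionary between plays and words of $S$ has been established.
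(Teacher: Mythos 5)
Your proposal is correct and takes essentially the same route as the paper: the paper's own (two-sentence) proof likewise asserts that Player~1 has a winning strategy if and only if $\hat{Z}$ is an $S$-maximal bifix code, and then invokes Theorem~4.4 of \cite{FiniteIndexProperty} as a black box. You simply fill in the details the paper leaves implicit --- the observation that Player~1's moves in $Z(S)$ are forced so that winning means every infinite path of $S$ meets $\hat{Z}$, the reduction to a finite code, and the equivalence of $S$-maximality as a prefix code and as a bifix code for recurrent $S$.
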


\begin{proof}
Player~1 has a winning strategy if and only if $\hat{Z}$ is an $S$-maximal bifix code. By Theorem~4.4 of \cite{FiniteIndexProperty}, this holds if and only if $\hat{Z}$ is a basis of a finite index subgroup of $F(\mathcal{A})$.
\end{proof}

\section{Covering the Tree by a Schreier Graph} \label{section:coverings}

In this section, we show that several results concerning maximal prefix codes can be obtained by applying Martin’s notion of covering to the game tree, using the infinite tree associated with the free group.
First, we introduce the concept of coverings, as presented in~\cite{Solan} and ~\cite{MartinCovering}.
\begin{definition}
    Let $T$ be a tree. A \emph{covering} is a quadruple $(\tilde{T} , \pi, \varphi_1
, \varphi_2)$, where $\tilde{T}$ is a tree, $\pi : \tilde{T} \to T, \varphi_{1,2}
: S_{1,2}(\tilde{T}) \to S_{1,2}(T)$, such that the following properties hold:

$\bullet$ The function $\pi$ sends positions of length $n$ to positions of length $n$:
 $len(\tilde{p}) = len(\pi(\tilde{p})), \forall \tilde{p} \in \tilde{T}$.
In addition, under $\pi$, two sequences that extend one another are mapped
to sequences that extend one another.
$\tilde{p},\tilde{p} \in \tilde{T} , \tilde{p} \preceq \tilde{p'} \Rightarrow \pi(\tilde{p}) \preceq \pi(\tilde{p'})$.
This in particular implies that $\pi$ defines a function from $[\tilde{T}]$ to $[T]$.
This map is also denoted by $\pi$.

$\bullet$ For each $i = 1,2$, the move $\varphi_i(\tilde{s_i})(p)$ depends on the behavior of $\tilde{s_i}$ for
positions of length at most $len(p)$. That is, for every $\tilde{s_i} , \tilde{s'_i}$ and
every $n \in \N$, if $\tilde{s_i}(\tilde{p}) = \tilde{s'_i}(\tilde{p})$ for every position $\tilde{p} \in \tilde{T}$ of length at most
$n$, then $\varphi_i(\tilde{s_i})(p) = \varphi_i(\tilde{s'_i}
)(p)$ for every position $p \in T$ of length at most
$n$.

$\bullet$ For each $i = 1,2$ and every strategy $\tilde{s_i} \in S_i(\tilde{T})$, if the play $x$ is consistent with $\varphi_i(\tilde{s_i})$, then there is a play $\tilde{x}$ that is consistent with $\tilde{s_i}$ and
satisfies $\pi(\tilde{x}) = x$.
\end{definition}
The concept of covering is useful because of the following result, due to Martin~\cite{MartinCovering}.
\begin{lemma} \label{lemma: martin covering lemma}
    Let $(T, W)$ be a game, let $(T , \pi, \varphi_1 , \varphi_2 )$ be a covering
of $T$, and let $i \in {1,2}$. If Player $i$ has a winning strategy $\tilde{s_i}$ in the game $(\tilde{T} , \pi^{-1}(W))$, then $\varphi_i(\tilde{s_i})$ is a winning strategy of Player $i$ in $(T,W)$. In
particular, if the games are determined, then the same player has a winning strategy in both $(\tilde{T},\pi^{-1}(W))$ and $(T,W)$.

\end{lemma}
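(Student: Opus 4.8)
The plan is to unwind the defining properties of a covering directly; the lifting property (the third bullet) carries the entire argument, while the first bullet is used only to make $\pi:[\tilde{T}]\to[T]$ and hence $\pi^{-1}(W)$ meaningful, and the second (causality) bullet is not needed for this argument. I would treat $i=1$ first and obtain $i=2$ by replacing $W$ with its complement $W^c$.

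So I would fix a winning strategy $\tilde{s_i}\in S_i(\tilde{T})$ for Player~$i$ in the game $(\tilde{T},\pi^{-1}(W))$ and set $s_i:=\varphi_i(\tilde{s_i})$. To show $s_i$ is winning in $(T,W)$, I would take an arbitrary play $x\in[T]$ consistent with $s_i$ and apply the third covering axiom to produce a play $\tilde{x}\in[\tilde{T}]$ consistent with $\tilde{s_i}$ satisfying $\pi(\tilde{x})=x$. For $i=1$: since $\tilde{s_1}$ is winning, $\tilde{x}\in\pi^{-1}(W)$, whence $x=\pi(\tilde{x})\in W$; as $x$ was arbitrary, $s_1$ is winning for Player~1. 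For $i=2$: a winning $\tilde{s_2}$ forces every consistent play into $(\pi^{-1}(W))^c=\pi^{-1}(W^c)$, so the same lift gives $\tilde{x}\in\pi^{-1}(W^c)$ and thus $x=\pi(\tilde{x})\in W^c$; hence $\varphi_2(\tilde{s_2})$ is winning for Player~2.

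For the final (``in particular'') clause I would invoke determinacy of both games. The two implications just established say that whichever player has a winning strategy upstairs, in $(\tilde{T},\pi^{-1}(W))$, also has one downstairs, in $(T,W)$. If both games are determined, then upstairs exactly one player, say Player~$j$, wins; by the corresponding implication Player~$j$ wins downstairs as well, and by determinacy of $(T,W)$ this is the unique downstairs winner. Hence the same player has a winning strategy in both games.

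I do not expect a genuine obstacle here: all the content is already packaged into the covering axioms, and the proof is a short chase through them. The only two points requiring care are using the lifting axiom in the correct direction---from a play consistent with the pushed-forward strategy $\varphi_i(\tilde{s_i})$ \emph{up} to a play consistent with $\tilde{s_i}$, rather than attempting to push a single fixed lift back down---and keeping the complement straight in the case $i=2$, so that $\pi^{-1}(W^c)$ rather than $\pi^{-1}(W)$ is the relevant winning target.
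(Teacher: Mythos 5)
Your proof is correct. Note that the paper itself states this lemma without proof, attributing it to Martin~\cite{MartinCovering}, so there is no in-paper argument to compare against; your direct unwinding of the covering axioms is the standard one. The key steps are exactly right: given a play $x$ consistent with $\varphi_i(\tilde{s_i})$, the third covering axiom lifts it to a play $\tilde{x}$ consistent with $\tilde{s_i}$ with $\pi(\tilde{x})=x$, and the winning condition pushes down through $\pi$, with the identity $(\pi^{-1}(W))^c=\pi^{-1}(W^c)$ (valid since $\pi$ is a total map $[\tilde{T}]\to[T]$) handling the Player~2 case. Your side remark is also accurate: the causality axiom (second bullet) plays no role in this lemma --- it is needed only in Martin's inductive construction where coverings are composed and limits taken --- and the ``in particular'' clause needs only determinacy of the lifted game to produce a winner upstairs, whom your implication then transports downstairs.
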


We now introduce a covering of the tree 
\[
T = \mathcal A^{<\mathbb N}
\] 
using the Schreier graph of the trivial subgroup of the free group $F(\mathcal A)$, as discussed in the previous section.

\begin{lemma} \label{lemma: covering by schreier graph}
Let 
\[
W = \bigcup_{p \in Z} [T_p],
\] 
and denote the positions in $Z$ by 
\[
p = \langle a_1,\dots,a_{n} \rangle \in Z.
\]  
Then the tree $Sch(\langle \rangle)$ provides a covering of $(T,W)$.  
In this covering, 
\[
\pi^{-1}(W) = W' = \bigcup_{p = \langle a_1,...,a_n \rangle \in Z} \bigcup_{\epsilon_1,\dots,\epsilon_{n} \in \{\pm 1\}} [T_{\langle a_1^{\epsilon_1},\dots,a_{n}^{\epsilon_{n}} \rangle}],
\]
and the union is taken such that no reduction (that is, segments of the form $aa^{-1}$ or $a^{-1}a$) appears in $a_1^{\epsilon_1}\dots a_{n}^{\epsilon_{n}}$.
\end{lemma}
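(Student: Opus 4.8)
The plan is to realize $Sch(\langle\rangle)$ concretely as the tree $\tilde T$ of reduced words over the doubled alphabet $\mathcal A^{\pm 1}=\{a^{\epsilon}: a\in\mathcal A,\ \epsilon\in\{\pm 1\}\}$, rooted at the empty word $o=\langle\rangle$: its positions are exactly the finite reduced words, and appending $b\in\mathcal A^{\pm 1}$ to a reduced word $\tilde p$ is admissible precisely when $\tilde p\circ\langle b\rangle$ contains no reduction. This is a tree in the sense of the paper, since prefixes of reduced words are reduced and every reduced word admits a non-reducing extension. The projection $\pi:\tilde T\to T$ is the letterwise ``forget the sign'' map, $\pi(\langle a_1^{\epsilon_1},\dots,a_n^{\epsilon_n}\rangle)=\langle a_1,\dots,a_n\rangle$. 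First I would check the structural axiom of a covering: $\pi$ clearly preserves length, and two reduced words extending one another agree letter-by-letter on the shorter one, so their images agree as well; hence $\pi$ preserves $\preceq$ and descends to a map $[\tilde T]\to[T]$. The identity $\pi^{-1}(W)=W'$ is then immediate, since a play $\tilde x$ projects into $W$ iff some $\langle a_1,\dots,a_n\rangle\in Z$ is a prefix of $\pi(\tilde x)$, which happens iff the length-$n$ prefix of $\tilde x$ is one of the admissible sign-lifts $\langle a_1^{\epsilon_1},\dots,a_n^{\epsilon_n}\rangle$ (i.e. a reduced word) of that position; this is exactly the description of $W'$.

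The substantive work is constructing the strategy maps $\varphi_i$ and verifying the two dynamic axioms. The obstacle is that $\pi$ is not injective on moves: a base letter $a\in\mathcal A$ has two lifts $a,a^{-1}$, so a history $p\in T$ does not by itself determine a history $\tilde p\in\tilde T$. I would resolve this by fixing, once and for all, a canonical lifting rule for the \emph{opponent's} moves: from a reduced word $\tilde p$, lift a base letter $a$ to $a$ if $\tilde p\circ\langle a\rangle$ is reduced, and to $a^{-1}$ otherwise. The combinatorial fact that legitimizes this is that from any reduced word and any base letter $a$, at least one of $a,a^{-1}$ is a non-reducing extension (both are, unless the last letter is $a$ or $a^{-1}$, in which case exactly one is). Using this rule I would define, for $\tilde s_i\in S_i(\tilde T)$, a lift map $L=L_{\tilde s_i}$ on histories by recursion on length: $L(\langle\rangle)=\langle\rangle$; at a move of player $i$ we append $\tilde s_i(L(\cdot))$, which is a legal move in $\tilde T$ by hypothesis; at a move of the opponent we append the canonical lift of that move. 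Then set $\varphi_i(\tilde s_i)(p):=\pi\big(\tilde s_i(L(p))\big)$ at player-$i$ positions $p$. Since $L(p)$ is built only from values of $\tilde s_i$ on lifts of proper prefixes of $p$, and the final evaluation is at $L(p)$ of length $\mathrm{len}(p)$, the value $\varphi_i(\tilde s_i)(p)$ depends on $\tilde s_i$ only through positions of length at most $\mathrm{len}(p)$, which is the causality axiom.

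Finally I would verify the play-lifting axiom. Given a play $x$ consistent with $\varphi_i(\tilde s_i)$, I extend the recursion $L$ along $x$ to produce an infinite reduced word $\tilde x$ with $\pi(\tilde x)=x$: at opponent turns $\tilde x$ uses the canonical lift of $x$'s move (always legal by the combinatorial fact above), and at player-$i$ turns $\tilde x$ uses $\tilde s_i(L(\cdot))$, whose projection equals the move prescribed by $\varphi_i(\tilde s_i)$ and hence agrees with $x$ because $x$ follows $\varphi_i(\tilde s_i)$. By construction $\tilde x$ is consistent with $\tilde s_i$ and projects to $x$, so all covering axioms hold and Lemma~\ref{lemma: martin covering lemma} applies. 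The main obstacle, and the point requiring genuine care, is exactly the interplay between the definition of $\varphi_i$ and the lift in this last step: both must use the \emph{same} canonical opponent-lifting rule and the same recursion $L$, so that the internally maintained lift $\tilde p$ used to define player $i$'s move coincides with the corresponding prefix of $\tilde x$. Checking that these two recursions agree step-by-step, so that $\tilde s_i(L(p))$ indeed projects to the move actually played in $x$, is the heart of the argument; everything else reduces to the routine observation that reduced words are preserved throughout.
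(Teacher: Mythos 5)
Your proposal is correct, and it follows the same route as the paper: realize $Sch(\langle\rangle)$ as the tree of reduced words over $\mathcal{A}^{\pm 1}$, take $\pi$ to be the letterwise forget-the-sign map, push strategies down by projecting moves, and identify $\pi^{-1}(W)=W'$ exactly as you do. The difference is one of rigor, and it favors your write-up. The paper defines $\varphi_j$ by the one-line rule ``$\varphi_j(s_j)(p)=a$ whenever $s_j(p)=a$ or $a^{-1}$'' and declares the covering axioms verifiable by direct inspection; read literally, this rule is ambiguous, since a position $p\in T$ has several reduced lifts in $Sch(\langle\rangle)$, and a covering-tree strategy may prescribe, at distinct lifts of $p$, moves that project to \emph{different} letters of $\mathcal{A}$, so $\varphi_j(s_j)(p)$ is not well defined without extra bookkeeping. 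Your canonical opponent-lifting rule together with the recursion $L_{\tilde s_i}$ (maintain one distinguished lift of the history, feed it to $\tilde s_i$, project the answer) is exactly the missing bookkeeping: it makes $\varphi_i(\tilde s_i)$ a genuine strategy on all of $T$, makes the causality axiom checkable by induction on length, and turns the play-lifting axiom into an honest construction rather than an assertion. The point you single out as the heart of the argument---that the lift used to define $\varphi_i$ and the lift produced for the play-lifting axiom must run the same recursion---is precisely what the paper's proof leaves implicit.
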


\begin{remark}
    From now on, whenever we write $\langle a_1^{\epsilon_1},\dots,a_{n}^{\epsilon_{n}} \rangle$ we assume that no reduction occurs.
\end{remark}
\begin{proof}
We construct a covering of $T$ as follows. Define 
\[
\pi : Sch(\langle \rangle) \to T, \quad \pi(\langle a_1^{\epsilon_1},\dots,a_{n}^{\epsilon_{n}}\rangle) = \langle a_1,\dots,a_{n}\rangle
\] 
Where $a_i \in \mathcal{A}, \epsilon_i \in \{\pm 1\}, \quad 1\le i \le n$.

Similarly, define 
\[
\varphi_j : S_j(Sch(\langle \rangle)) \to S_j(T), \quad j=1,2,
\] 
by setting $\varphi_j(s_j)(p) = a$ whenever $s_j(p) = a$ or $a^{-1}$.

One verifies all covering properties by direct inspection. For instance, if $x \in [T]$ corresponds to strategies $\varphi_j(s_j)$, i.e., 
\[
x = x(\varphi_1(s_1), \varphi_2(s_2)),
\] 
then let 
\[
x' = x'(s_1, s_2) \in [Sch(\langle \rangle)].
\] 
Notice that for every $n$, 
\[
x'[n] = x[n] \quad \text{or} \quad x[n]^{-1},
\] 
so $\pi(x') = x$.  

Hence, $(Sch(\langle \rangle), \pi, \varphi_1, \varphi_2)$ indeed forms a covering.   
Finally, observe that 
\[
\pi^{-1}(W) = W',
\] 
which completes the proof.
\end{proof}

\medskip

As a direct consequence of Lemma~\ref{lemma: covering by schreier graph} and Lemma~\ref{lemma: martin covering lemma}, we obtain:

\begin{lemma} \label{lemma:cover by sch}
Let 
\[
W = \bigcup_{p = \langle a_1, \dots, a_{n} \rangle \in Z} [T_p].
\]  
Then Player~1 can win the game $(T,W)$ if and only if she can win the game $(Sch(\langle \rangle), W')$, where
\[
W' = \bigcup_{p = \langle a_1, \dots, a_{n} \rangle \in Z} \bigcup_{\epsilon_1, \dots, \epsilon_{n} \in \{\pm 1\}} [T_{\langle a_1^{\epsilon_1}, \dots, a_{n}^{\epsilon_{n}} \rangle}].
\]
\end{lemma}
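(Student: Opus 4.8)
The plan is to obtain this lemma as an immediate consequence of the two preceding results, since the statement is precisely the specialization of Martin's covering lemma to the covering constructed in Lemma~\ref{lemma: covering by schreier graph}. First I would invoke Lemma~\ref{lemma: covering by schreier graph} to fix the covering $(Sch(\langle\,\rangle), \pi, \varphi_1, \varphi_2)$ of $(T,W)$ and to record the identity $\pi^{-1}(W) = W'$, so that the game played on the covering tree is exactly $(Sch(\langle\,\rangle), W')$. With the covering in place, the content of the lemma is just the transfer of winning strategies between base and cover provided by Lemma~\ref{lemma: martin covering lemma}.

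The only genuine verification needed is that both games are determined, so that the ``in particular'' clause of Lemma~\ref{lemma: martin covering lemma} applies and yields that the same player wins both. The base game $(T,W)$ is determined by the Gale--Stewart theorem because $W$ is open. For the covering game, I would observe that $Sch(\langle\,\rangle)$ is a bona fide tree (the set of reduced words over $\mathcal A\cup\mathcal A^{-1}$ is prefix-closed and every position extends), and that $W'$ is manifestly open: by its explicit construction it is a union of basic open sets $[T_{\langle a_1^{\epsilon_1},\dots,a_n^{\epsilon_n}\rangle}]$ in the boundary space $[Sch(\langle\,\rangle)]$. Hence $(Sch(\langle\,\rangle), W')$ is again a Gale--Stewart game with open winning set, and is therefore determined.

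With both games determined, Lemma~\ref{lemma: martin covering lemma} gives the equivalence at once. If one prefers to expand the two directions explicitly, the ($\Leftarrow$) direction is the direct content of Martin's lemma for $i=1$: a winning strategy $\tilde{s}_1$ for Player~1 in $(Sch(\langle\,\rangle), W')$ pushes forward under $\varphi_1$ to a winning strategy in $(T,W)$. For ($\Rightarrow$) I would argue by contraposition using determinacy of the cover: if Player~1 does not win $(Sch(\langle\,\rangle), W')$, then Player~2 does, and Martin's lemma for $i=2$ pushes this forward to a winning strategy for Player~2 in $(T,W)$, contradicting the assumption that Player~1 wins $(T,W)$. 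Since everything reduces to quoting the two earlier lemmas, I do not expect a substantial obstacle; the only point requiring care is confirming that the winning set on the covering tree is open so that determinacy is available there, which is immediate from the explicit description of $W'$.
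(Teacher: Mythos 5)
Your proposal is correct and follows exactly the paper's route: the paper states this lemma as a direct consequence of Lemma~\ref{lemma: covering by schreier graph} (which supplies the covering and the identity $\pi^{-1}(W)=W'$) and Lemma~\ref{lemma: martin covering lemma}, with determinacy of both games (via Gale--Stewart, since $W$ and $W'$ are open) licensing the ``in particular'' clause. Your explicit check that $W'$ is open and that $Sch(\langle\,\rangle)$ is a tree simply spells out details the paper leaves implicit.
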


We now turn to the proof of Theorem~\ref{main result: max prefix code ineq}. We begin by establishing the following Proposition.

\begin{proposition} \label{Theorem: Kraft like inequality with coverings}
Let $C$ be a maximal prefix code, and let $x \in \mathcal{A}^{\N}$. Then
\[
\sum_{c = \langle c_1, \ldots, c_n \rangle \in C}
2^{|\{ i \in [n] : \, c_i \neq x_i \}|}
\cdot \frac{1}{(2|\mathcal{A}| - 1)^{\mathrm{len}(c)}} \geq 1.
\]
\end{proposition}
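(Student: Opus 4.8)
The plan is to realize the given data as a \emph{covered} game and then read the desired sum off as the total mass of a probability measure. First I would invoke Lemma~\ref{lemma:create winning set from max prefix code} with the specific Player~1 strategy $s_1$ that plays $x_i$ on her $i$-th move, irrespective of history; concretely $s_1(p)=x_{\operatorname{len}(p)/2+1}$ for every even-length position $p$. Since $C$ is a maximal prefix code, the lemma guarantees that $Z:=Z_{s_1}(C)$ is minimal-size and that $s_1$ is the \emph{unique} winning strategy of Player~1 in $(T,W)$ with $W=\bigcup_{p\in Z}[T_p]$. The positions of $Z$ then have the form $\langle x_1,c_1,x_2,c_2,\dots,x_n,c_n\rangle$ with $\langle c_1,\dots,c_n\rangle\in C$, so that the exponent $\operatorname{len}(c)=n$ equals the number of Player~2 moves needed to reach the corresponding position.

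Next I would pass to the covering of Lemma~\ref{lemma:cover by sch}: Player~1 wins $(Sch(\langle\rangle),W')$ with some winning strategy $\tilde s_1$. Because the downstairs game has a \emph{unique} winning strategy, Martin's covering result (Lemma~\ref{lemma: martin covering lemma}) forces $\varphi_1(\tilde s_1)=s_1$; hence, whatever signs $\tilde s_1$ attaches, the underlying \emph{letter} that Player~1 plays on her $i$-th move is always $x_i$. I would then let Player~2 play uniformly at random among her $2|\mathcal A|-1$ admissible (non-backtracking) continuations at each of her turns. Since $\tilde s_1$ is winning, every realization of this random play is consistent with $\tilde s_1$ and therefore enters $W'$, so the event of entering $W'$ has probability $1$.

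The heart of the argument is a bookkeeping of Player~2's admissible moves. At her $i$-th turn the current reduced word ends in $x_i^{\delta_i}$ for some sign $\delta_i$ dictated by $\tilde s_1$, so the only forbidden continuation is $x_i^{-\delta_i}$. Counting the admissible continuations whose underlying letter is a prescribed $c_i$ gives exactly two choices ($c_i$ and $c_i^{-1}$) when $c_i\neq x_i$, and exactly one choice when $c_i=x_i$; crucially this count is insensitive to the actual value of $\delta_i$. Consequently, for a fixed codeword $c=\langle c_1,\dots,c_n\rangle\in C$, the probability that Player~2's first $n$ letters spell $c$ equals $2^{|\{i:c_i\neq x_i\}|}/(2|\mathcal A|-1)^{n}$. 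Because $C$ is prefix-free, these events (over distinct $c\in C$) are disjoint and their union is exactly the event of entering $W'$; summing and using that this event is almost sure yields
\[
\sum_{c=\langle c_1,\dots,c_n\rangle\in C} 2^{|\{i\in[n]:c_i\neq x_i\}|}\cdot\frac{1}{(2|\mathcal A|-1)^{\operatorname{len}(c)}}=1,
\]
which in particular is $\geq 1$, as claimed. Maximality of $C$ enters twice here: once to secure the uniqueness of $s_1$, and once—through the completeness of maximal prefix codes, which is automatic from $\tilde s_1$ being winning—to guarantee that Player~2's random letters meet $C$ with probability $1$.

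The main obstacle I anticipate is pinning down the two-versus-one counting of Player~2's admissible continuations together with the claim that Player~1's letters are forced to be $x$. The former requires handling the non-backtracking constraint of $Sch(\langle\rangle)$ precisely and verifying its independence from the sign $\delta_i$ chosen by $\tilde s_1$; the latter rests on combining the uniqueness clause of Lemma~\ref{lemma:create winning set from max prefix code} with the projection identity $\varphi_1(\tilde s_1)=s_1$ supplied by Lemma~\ref{lemma: martin covering lemma}. Once these two points are secured, the probabilistic partition makes the stated inequality immediate (indeed an equality, matching the first assertion of Theorem~\ref{main result: max prefix code ineq}).
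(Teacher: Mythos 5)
Your proposal is correct, and it shares the paper's entire scaffolding: the same construction via Lemma~\ref{lemma:create winning set from max prefix code} with the strategy prescribed by $x$, the same lift through the Schreier-tree covering of Lemma~\ref{lemma:cover by sch}, and the same two-versus-one count of admissible signed continuations according to whether $c_i\neq x_i$ or $c_i=x_i$. Where you genuinely diverge is the final step. The paper stays deterministic: it counts how many positions consistent with a lifted winning strategy project onto each $p\in Z$, identifies $Sch(\langle\rangle)$ with a full tree of branching $2|\mathcal{A}|-1$, and then invokes the inductive sum condition of Lemma~\ref{lemma: necceserry open condition lengths} to get the inequality. You instead let Player~2 randomize uniformly over her $2|\mathcal A|-1$ non-backtracking moves and use countable additivity over the disjoint events indexed by $c\in C$; this is precisely the technique the paper itself deploys only later, in Section~7 (Lemma~\ref{countable main result - sums measure} and its corollary with the measures $\mu^i_a=\mu_a/(2-\mu_{x_i})$, which reduce to your uniform weights $1/(2|\mathcal A|-1)$). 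Your route buys two things: it delivers the equality $=1$ immediately (the paper obtains equality only via the subsequent averaging remark feeding into Theorem~\ref{main result: max prefix code ineq}), and it sidesteps the mild gloss in the paper's identification of $Sch(\langle\rangle)$ with a full $(2|\mathcal A|-1)$-ary tree, since Player~2 is never at the root and always has exactly $2|\mathcal A|-1$ options. Your justification that Player~1's projected letters are forced to be the $x_i$ --- combining the uniqueness clause of Lemma~\ref{lemma:create winning set from max prefix code} with $\varphi_1(\tilde s_1)$ being winning by Lemma~\ref{lemma: martin covering lemma} --- is also spelled out more explicitly than in the paper, which asserts the corresponding description of the consistent positions without argument.

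One small caveat: the identity $\varphi_1(\tilde s_1)=s_1$ cannot hold as an equality of functions on all even-length positions, since any strategy may be altered at positions inconsistent with it (or at positions already inside $W$) while remaining winning; uniqueness holds only along positions consistent with the strategy that have not yet entered $Z$. This is exactly the looseness already present in the paper's statement of Lemma~\ref{lemma:create winning set from max prefix code}, and the consequence you actually use --- that along any play consistent with $\tilde s_1$, Player~1's letters project to $x_i$ until a codeword is completed --- is valid, so the gap is cosmetic rather than substantive.
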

\begin{proof} 
Let $C$ be a maximal prefix code, and let 
\[
x=\langle x_1,x_2,\ldots\rangle \in \mathcal{A}^{\mathbb{N}}
\]
(if $C$ is finite, we may instead take $x\in \mathcal{A}^{\max_{c\in C}\mathrm{len}(c)}$). Using Lemma~\ref{lemma:create winning set from max prefix code}, we construct an open winning set whose unique winning strategy for Player~1 is prescribed by $x$. Namely, define
\[
Z(C)=\bigl\{\, p=\langle x_1,c_1,x_2,c_2,\ldots,x_n,c_n\rangle : \langle c_1,\ldots,c_n\rangle\in C \,\bigr\},
\qquad 
W=\bigcup_{p\in Z(C)} [T_p].
\]

We now consider the lift $\pi^{-1}(W)$ of $W$ to the tree $\mathrm{Sch}(\langle\rangle)$. By Lemma~\ref{lemma:cover by sch}, Player~1 has a winning strategy in the game $\bigl(\mathrm{Sch}(\langle\rangle),\pi^{-1}(W)\bigr)$. Fix such a strategy, and consider the set of positions that are consistent with it. Our goal is to count how many of these positions are mapped, under the covering map $\pi$, to a single position in $T$.

Under $\pi$, all such positions are mapped to positions in the tree $\mathcal{A}^{\le \N}$ of the form
\[
p=\langle x_1,c_1,x_2,c_2,\ldots,x_n,c_n\rangle,
\qquad
\text{with }\langle c_1,\ldots,c_n\rangle\in C.
\]
For each $1\le i\le n$, we count the number of possible configurations for the corresponding $(2i-1,2i)$-entries of a lifted position.

\medskip

Suppose first that $x_i\neq c_i$. In this case, the position $p$ lifts to distinct positions in $\pi^{-1}(W)$, whose $(2i-1,2i)$-entries realize both possibilities
\[
\langle \tilde{x}_i, c_i\rangle
\quad\text{and}\quad
\langle \tilde{x}_i, c_i^{-1}\rangle,
\]
where $\tilde{x}_i$ denotes the corresponding lift of $x_i$ in $\mathrm{Sch}(\langle\rangle)$.

If, on the other hand, $x_i=c_i$, then $p$ lifts to positions whose $(2i-1,2i)$-entry contains only one of
\[
\langle \tilde{x}_i, c_i\rangle
\quad\text{or}\quad
\langle \tilde{x}_i, c_i^{-1}\rangle,
\]
depending on whether $\tilde{x}_i=x_i$ or $\tilde{x}_i=x_i^{-1}$ (see Figure~4 for the first case and Figure~5 for the latter).

\medskip 

It follows that the number of positions in $\mathrm{Sch}(\langle \rangle)$ consistent with a unique winning strategy of Player~1 that are mapped to a given position
\[
p = \langle x_1, c_1, \ldots,x_n, c_{n} \rangle \in T
\]
is exactly
\[
2^{|\{ i \in [n] : \, c_i \neq x_i \}|}.
\]
Identifying $\mathrm{Sch}(\langle \rangle)$ with a full tree whose alphabet has size $2|\mathcal{A}|-1$, and combining Lemma~\ref{lemma: necceserry open condition lengths} with the observation above, we obtain the desired result.
\end{proof}

\begin{figure}[htbp]
    \centering
    \includegraphics[width=0.5\linewidth]{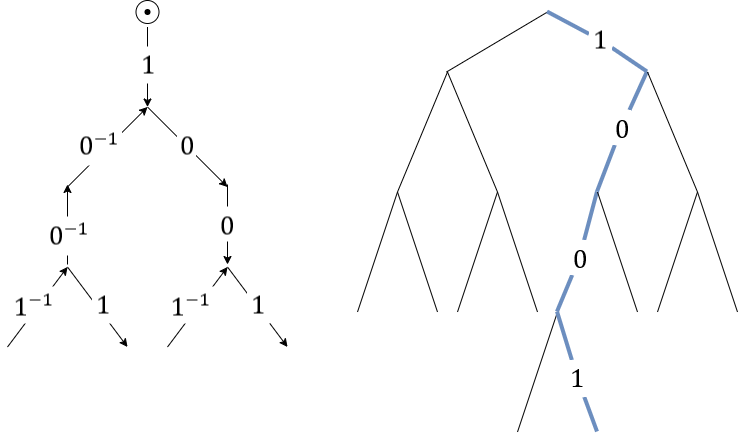}
    \caption{For example, let $x = \langle 1,0 \rangle$ and $c = \langle 0,1 \rangle$, which together determine the position $\langle 1,0,0,1 \rangle$ in the full binary tree $\{0,1\}^{\leq \mathbb{N}}$. See the diagram on the right, where this position is highlighted. The corresponding positions in the covering Schreier graph associated with $x = \langle 1,0 \rangle$ and $c = \langle 0,1 \rangle$ are $\langle 1,0,0,1 \rangle$, $\langle 1,0,0,1^{-1} \rangle$, $\langle 1,0^{-1},0^{-1},1 \rangle$, $\langle 1,0^{-1},0^{-1},1^{-1} \rangle$. See the diagram on the left.}
    \label{figure-covering-of-1001}
\end{figure}
\begin{figure}[htbp]
    \centering
    \includegraphics[width=0.5\linewidth]{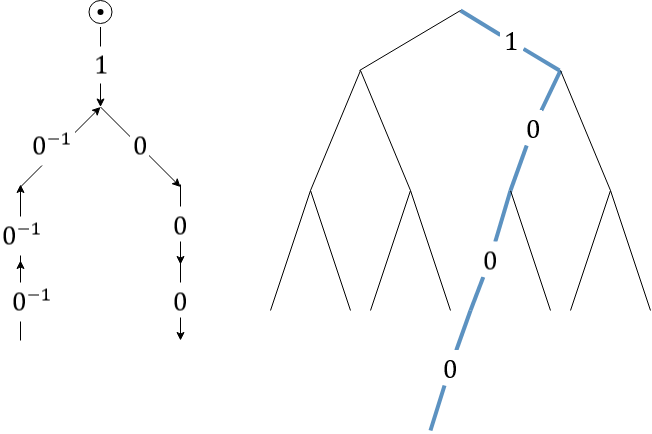}
    \caption{For example, let $x = \langle 1,0 \rangle$ and $c = \langle 0,0 \rangle$, which together determine the position $\langle 1,0,0,0 \rangle$ in the full binary tree $\{0,1\}^{\leq \mathbb{N}}$. See the diagram on the right, where this position is highlighted. The corresponding positions in the covering Schreier graph associated with $x = \langle 1,0 \rangle$ and $c = \langle 0,0 \rangle$ are $\langle 1,0,0,0 \rangle$, $\langle 1,0^{-1},0^{-1},0^{-1} \rangle$. See the diagram on the left.}
    \label{covering-1000}
\end{figure}

\newpage

The following examples demonstrate that these bounds are sharp.

\begin{example}
\begin{enumerate}
\item Let
\[
C = \{ \langle 1 \rangle, \langle 0,1 \rangle, \langle 0,0,1 \rangle, \langle 0,0,0 \rangle \},
\quad x = \langle 1,1,1,\ldots \rangle.
\]
Then
\[
\sum_{c \in C}
2^{|\{ i \in [n] : c_i \neq x_i \}|}
\cdot \frac{1}{(2|\mathcal{A}| - 1)^{\mathrm{len}(c)}}
=
\frac{2}{3} + \frac{2}{9} + \frac{2}{27} + \frac{1}{27}
= 1.
\]

\item Let $C = \{0,1\}^n$ and let $x \in \{0,1\}^{\mathbb{N}}$. Then
\[
\sum_{c \in C}
2^{|\{ i \in [n] : c_i \neq x_i \}|}
\cdot \frac{1}{(2|\mathcal{A}| - 1)^{\mathrm{len}(c)}}
=
\frac{1}{3^n}
\sum_{k=0}^n \binom{n}{k} 2^k
=
\frac{(1+2)^n}{3^n}
= 1.
\]
Removing any element from $C$ yields a strict inequality, corresponding to the fact that the resulting code is no longer maximal.
\end{enumerate}
\end{example}

\begin{remark}
    Let $C$ be a maximal prefix code, and let $m \ge \max_{c \in C} len(c)$. We can sum the inequality in Lemma, and get 
    \[
    |\mathcal{A}|^{n} \le \sum_{x \in \mathcal{A}^n} \sum_{c = \langle c_1, \ldots, c_n \rangle \in C}
2^{|\{ i \in [n] : \, c_i \neq x_i \}|}
\cdot \frac{1}{(2|\mathcal{A}| - 1)^{\mathrm{len}(c)}} = \sum_{c = \langle c_1, \ldots, c_n \rangle \in C} \frac{1}{(2|\mathcal{A}| - 1)^{\mathrm{len}(c)}} \sum_{x \in \mathcal{A}^n}
2^{|\{ i \in [n] : \, c_i \neq x_i \}|} =\]
\[
=\sum_{c \in C} \frac{1}{(2|\mathcal{A}| - 1)^{\mathrm{len}(c)}} |\mathcal{A}|^{n-len(c)} \sum_{k=0}^n
\binom{len(c)}{k} 2^{k} (|\mathcal{A}|-1)^k = \sum_{c \in C} \frac{1}{(2|\mathcal{A}| - 1)^{\mathrm{len}(c)}} |\mathcal{A}|^{n-len(c)} (2|\mathcal{A}| - 1)^{len(c)} =\]
    \[
    \sum_{c \in C}  |\mathcal{A}|^{n-len(c)}.
    \]
    Thus we get $1\le \sum_{c \in C}  |\mathcal{A}|^{-len(c)}=1$, meaning we could replace \say{$\le$} with \say{$=$} in the inequality.
\end{remark}
As a corollary, we obtain the main result.

\medskip

\begin{proof}[Proof of Theorem~\ref{main result: max prefix code ineq}]
Combining Proposition~\ref{Theorem: Kraft like inequality with coverings} with the preceding remark yields the first assertion of Theorem~\ref{main result: max prefix code ineq}. The second assertion then follows immediately from the first.
\end{proof}

\section{Further Discussion}
We show that several of the preceding results extend without difficulty to the case of a countable alphabet.

\medskip

Let $\mathcal{A}$ be a countable set (finite or infinite), and let $\mu$ be a probability measure on the measurable space $(\mathcal{A}, \mathcal{P}(\mathcal{A})).$ Thus $\mu$ is uniquely determined by the weights
\[
\mu_a := \mu(\{a\}), \qquad a \in \mathcal{A},
\]
which satisfy
\[
\sum_{a \in \mathcal{A}} \mu_a = 1.
\]

\begin{remark}
The assumption that the alphabet $\mathcal{A}$ is countable is essential for our arguments, as it allows the use of countable subadditivity (in particular, the union bound). It also ensures the existence of purely atomic probability measures, i.e.\ measures for which every singleton $\{a\}$ has positive mass $\mu_a > 0$.

Indeed, purely atomic probability measures can only be supported on countable sets. To see this, for each $n \in \mathbb{N}$ define
\[
A_n := \{ a \in \mathcal{A} : \mu_a > 1/n \}.
\]
Each set $A_n$ is finite, and since
\[
\mathcal{A} = \bigcup_{n \in \mathbb{N}} A_n,
\]
it follows that $\mathcal{A}$ is countable.
\end{remark}

Throughout this section, we consider the game $(T,W)$, where $T = \mathcal{A}^{\le \mathbb{N}}$ and the winning set $W$ is open and of the form
\[
W = \bigcup_{p \in Z} [T_p],
\qquad
Z \subseteq T.
\]
We begin by extending Theorem~\ref{Main result: sum condition} to this more general setting.

\begin{lemma}\label{countable main result - sums measure}
Let $\{\mu^i\}_{i=1}^\infty$ be a sequence of probability measures on $\mathcal{A}$. If
\[
\sum_{p = \langle a_1, a_2, \dots, a_n \rangle \in Z}
\;
\prod_{i=1}^{\lfloor n/2 \rfloor} \mu^i_{a_{2i}}
< 1,
\]
then Player~$2$ has a winning strategy in the game $(T,W)$.
\end{lemma}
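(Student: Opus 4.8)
The plan is to exhibit an explicit \emph{randomized} strategy for Player~2 that avoids $W$ with positive probability against any fixed strategy of Player~1, and then to upgrade this to an honest (pure) winning strategy by invoking Gale--Stewart open determinacy (valid here since $\mathcal{A}$, though possibly infinite, is discrete and $W$ is open). Concretely, I would pre-sample an independent sequence $(b_1,b_2,\dots)\in\mathcal{A}^{\N}$ with $b_i\sim\mu^i$, distributed according to the product measure $\bigotimes_{i\ge 1}\mu^i$, and let Player~2 play $b_i$ on her $i$-th turn (that is, at every position of odd length $2i-1$), ignoring the history. This defines a distribution over pure strategies $s_2$ for Player~2.

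Next I would fix an arbitrary pure strategy $s_1$ of Player~1 and bound the probability that the induced play lands in $W$. Recalling that Player~2's $i$-th move is the entry in coordinate $2i$ (as in the definition of $\hat p$), the key observation is that for a fixed position $p=\langle a_1,\dots,a_n\rangle\in Z$ the event $E_p=\{\,p\text{ is a prefix of } x(s_1,s_2)\,\}$ forces $b_i=a_{2i}$ for every $i\le\lfloor n/2\rfloor$, since along the realized play coordinate $2i$ is exactly $b_i$. As the $b_i$ are independent, this yields $\Prob[E_p]\le\prod_{i=1}^{\lfloor n/2\rfloor}\mu^i_{a_{2i}}$, and here the containment $E_p\subseteq\{b_i=a_{2i}\ \forall i\le\lfloor n/2\rfloor\}$ is all that is needed (Player~1's interleaved responses only shrink $E_p$ further). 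Since $\mathcal{A}$ is countable, $Z\subseteq\mathcal{A}^{<\N}$ is countable, so countable subadditivity gives
\[
\Prob\big[x(s_1,s_2)\in W\big]=\Prob\Big[\bigcup_{p\in Z}E_p\Big]\le\sum_{p=\langle a_1,\dots,a_n\rangle\in Z}\ \prod_{i=1}^{\lfloor n/2\rfloor}\mu^i_{a_{2i}}<1,
\]
by hypothesis. Hence, against every fixed $s_1$, Player~2's randomized strategy avoids $W$ with positive probability.

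Finally I would convert this into a genuine winning strategy. Suppose, for contradiction, that Player~2 has no winning strategy; by open determinacy Player~1 then has a winning strategy $s_1^{\ast}$, so that every play consistent with $s_1^{\ast}$ lies in $W$. But every realization of the randomized $s_2$ produces a play consistent with $s_1^{\ast}$, whence $\Prob[x(s_1^{\ast},s_2)\in W]=1$, contradicting the strict bound above. Therefore Player~2 has a winning strategy. Taking each $\mu^i$ uniform recovers Lemma~\ref{Main result: sum condition}. The only genuinely delicate point is this last step: the probabilistic computation by itself only shows that \emph{some} pure reply defeats each given $s_1$, and it is open determinacy---not the union bound---that supplies a single strategy defeating all of Player~1's strategies simultaneously. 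Note also that, unlike in the finite-alphabet setting, one cannot first reduce to a finite $Z$ via Lemma~\ref{lemma:win in open mean finite win}, since $\mathcal{A}^{\N}$ need no longer be compact; the union bound over the possibly infinite set $Z$ is precisely what replaces that compactness reduction.
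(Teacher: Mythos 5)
Your proposal is correct and follows essentially the same argument as the paper: Player~2 plays randomly and independently according to the measures $\mu^i$, the union bound together with independence shows any winning strategy of Player~1 would force the sum to be at least $1$, and Gale--Stewart open determinacy (valid for infinite alphabets) converts the nonexistence of a winning strategy for Player~1 into a winning strategy for Player~2. The only difference is presentational --- you make the determinacy step explicit, whereas the paper leaves it implicit in ``the conclusion follows.''
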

\begin{proof}
Assume, towards a contradiction, that Player~$1$ has a winning strategy, and fix such a strategy $s_1^\ast$. Suppose that Player~$2$ plays randomly as follows: at each even stage $2i$ ($i \ge 1$), Player~$2$ independently selects an action $a \in \mathcal{A}$ according to the probability measure $\mu^i$. This induces a probability distribution on the set of Player~$2$ strategies, which we denote by $s_2$.

Let $x(s_1^\ast, s_2)$ denote the (random) play generated by the pair of strategies $(s_1^\ast, s_2)$. Since $s_1^\ast$ is assumed to be winning for Player~$1$, we have
\[
\mathbb{P}\bigl( x(s_1^\ast, s_2) \in W \bigr) = 1.
\]
We recall that $W = \bigcup_{p \in Z} [T_p]$, and
it follows that
\[
1
= \mathbb{P}\!\left( \bigcup_{p \in Z} \{ p \preceq x(s_1^\ast, s_2) \} \right)
\le
\sum_{p \in Z} \mathbb{P}\bigl( p \preceq x(s_1^\ast, s_2) \bigr),
\]
where the inequality follows from countable subadditivity.

Now fix $p = \langle a_1, a_2, \dots, a_n \rangle \in Z$. If $p$ is not consistent with the strategy $s_1^\ast$, then $\mathbb{P}\bigl( p \preceq x(s_1^\ast, s_2) \bigr) = 0$. Otherwise, the event $\{ p \preceq x(s_1^\ast, s_2) \}$ occurs precisely when Player~$2$ selects the actions $a_{2}, a_{4}, \dots, a_{2\lfloor n/2 \rfloor}$ at the corresponding stages. By independence of Player~$2$’s random choices, we obtain
\[
\mathbb{P}\bigl( p \preceq x(s_1^\ast, s_2) \bigr)
=
\prod_{i=1}^{\lfloor n/2 \rfloor} \mu^i_{a_{2i}}.
\]

Combining the above yields
\[
1
\le
\sum_{p = \langle a_1, a_2, \dots, a_n \rangle \in Z}
\prod_{i=1}^{\lfloor n/2 \rfloor} \mu^i_{a_{2i}},
\]
which contradicts the assumption of the lemma. The conclusion follows.
\end{proof}

\begin{remark}
Let $W \subseteq [\mathcal{A}^{\le \mathbb{N}}]$ be a winning set for which Player~$1$ has a unique winning strategy. Then the sum appearing in Lemma~\ref{countable main result - sums measure} is equal to $1$. Indeed, in this case the events corresponding to distinct prefixes $p \in Z$ are mutually disjoint, since the probability that a given position is a prefix of two distinct plays is zero. Consequently, the inequality arising from the union bound becomes an equality.
\end{remark}

\medskip

By suitably modifying Definition~\ref{def of prefix codes} to allow a countable alphabet, one readily verifies that Lemma~\ref{lemma:create winning set from max prefix code} and (a slightly modified) Theorem~\ref{theorem: max prefix equivalent finite alphabet} continue to hold in this setting.

\medskip

We may likewise formulate and prove an extension of Theorem~\ref{Theorem: Kraft like inequality with coverings} to the countable-alphabet setting.

\begin{proposition}
Let $C$ be a maximal prefix code over the alphabet $\mathcal{A}$, let
$x = (x_i)_{i \in \mathbb{N}} \in \mathcal{A}^{\mathbb{N}}$, and let $\mu$ be a probability measure on $\mathcal{A}$. Then
\[
\sum_{c = \langle c_1, \ldots, c_n \rangle \in C}
2^{\lvert \{ i \in [n] : c_i \neq x_i \} \rvert}
\prod_{i=1}^{n} \frac{\mu_{c_i}}{2 - \mu_{x_i}}
= 1.
\]
\end{proposition}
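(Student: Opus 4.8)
The plan is to follow the same covering argument as in the proof of Proposition~\ref{Theorem: Kraft like inequality with coverings}, but to replace the uniform weighting on the tree $\mathrm{Sch}(\langle\rangle)$ by a \emph{randomized} play of Player~2 governed by $\mu$, and then to invoke the measure-theoretic counterpart Lemma~\ref{countable main result - sums measure} in its equality form. First I would reconstruct the game: given the maximal prefix code $C$, fix $x=\langle x_1,x_2,\dots\rangle$ and let Player~1's unique winning strategy be to play $x_i$ at stage $i$, as in Lemma~\ref{lemma:create winning set from max prefix code}; set $Z(C)=\{\langle x_1,c_1,\dots,x_n,c_n\rangle:\langle c_1,\dots,c_n\rangle\in C\}$ and $W=\bigcup_{p\in Z(C)}[T_p]$. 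By Lemma~\ref{lemma:cover by sch}, Player~1 also wins $(\mathrm{Sch}(\langle\rangle),\pi^{-1}(W))$, and this winning strategy is again unique; its consistent lifts mapping to a fixed $p=\langle x_1,c_1,\dots,x_n,c_n\rangle$ number exactly $2^{|\{i\in[n]:c_i\neq x_i\}|}$, by the very counting carried out there (see Figures~\ref{figure-covering-of-1001} and~\ref{covering-1000}).

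Next I would endow Player~2 with a randomized strategy on the covering tree. I lift $\mu$ to the signed alphabet $\{a^{\pm1}:a\in\mathcal{A}\}$ by assigning weight $\mu_b$ to each of $b^{+1}$ and $b^{-1}$, so that the total weight is $2$. At any turn of Player~2 the reducedness constraint forbids exactly one signed letter, namely the inverse $x_i^{-\epsilon_i}$ of the letter just played by Player~1, which carries weight $\mu_{x_i}$; I therefore let Player~2 play each remaining signed letter $b^{\eta}$ with probability $\mu_b/(2-\mu_{x_i})$. This is a genuine probability measure, since the admissible weights sum to $2-\mu_{x_i}$. Crucially, the normalizing constant depends only on the base letter $x_i$ prescribed by Player~1's deterministic strategy, so although the forbidden atom moves with the position, the computation underlying Lemma~\ref{countable main result - sums measure} applies verbatim to this adaptive randomization.

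Finally I would run the probability count. Because the lifted Player~1 strategy is winning, the random play lands in $\pi^{-1}(W)$ with probability $1$; since the consistent lifts form a prefix-free antichain, the play meets exactly one of them almost surely, giving $1=\sum_{p'}\mathbb{P}(p'\preceq\text{play})$ over the consistent lifts $p'$. As Player~1 moves deterministically, $\mathbb{P}(p'\preceq\text{play})$ is the product of Player~2's per-stage probabilities, each equal to $\mu_{c_i}/(2-\mu_{x_i})$ regardless of the sign actually chosen. Grouping the consistent lifts by their image $c\in C$ and using the count $2^{|\{i:c_i\neq x_i\}|}$, the contribution of $c$ is $2^{|\{i:c_i\neq x_i\}|}\prod_{i=1}^{n}\mu_{c_i}/(2-\mu_{x_i})$, and summing over $C$ yields the asserted identity.

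The main obstacle is purely the bookkeeping around the reducedness constraint: one must verify that exactly one signed letter is forbidden at each Player~2 turn and that its lifted weight is precisely $\mu_{x_i}$, so that normalization produces the denominator $2-\mu_{x_i}$ and the per-codeword factor $2^{|\{i:c_i\neq x_i\}|}$ emerges from summing over the admissible signs (two when $c_i\neq x_i$, one when $c_i=x_i$). The secondary point is upgrading the union bound of Lemma~\ref{countable main result - sums measure} to an exact equality, which follows from the uniqueness of the winning strategy, equivalently from the disjointness of the prefix events; the lift-counting itself is inherited unchanged from the finite-alphabet case.
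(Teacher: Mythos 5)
Your proposal is correct and follows essentially the same route as the paper's own proof: both lift the game determined by $C$ and $x$ to the Schreier covering $\mathrm{Sch}(\langle\rangle)$, have Player~2 randomize with the adaptive measures assigning probability $\mu_b/(2-\mu_{x_i})$ to each admissible signed letter (the inverse of Player~1's move being the single forbidden one), and derive equality from the uniqueness of Player~1's winning strategy, i.e.\ the disjointness of the prefix events, with the factor $2^{\lvert\{ i : c_i \neq x_i\}\rvert}$ coming from the lift count. The only cosmetic difference is that you argue the equality directly via the antichain property of the consistent lifts, where the paper cites Lemma~\ref{countable main result - sums measure} together with the remark following it.
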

\begin{proof}
We follow the proof of Proposition~\ref{Theorem: Kraft like inequality with coverings}. Consider a subtree of the covering Schreier graph $\mathrm{Sch}(\langle\rangle)$ corresponding to a winning strategy of Player~1. 

For each integer $i \ge 1$, the move $a_{2i-1}$ of Player~$1$ at stage $2i-1$ is uniquely determined along this subtree and is equal to either $x_i$ or $x_i^{-1}$. We define a family of probability measures $\{\mu^i\}_{i \ge 1}$ on $\mathcal{A} \cup \mathcal{A}^{-1} \setminus \{a_{2i-1}^{-1}\}$ as follows. For every $a \in \mathcal{A} \setminus \{x_i\}$, set
\[
\mu^i_a = \mu^i_{a^{-1}}
:= \frac{\mu_a}{2} \cdot \frac{1}{1 - (\mu_{x_i}/2)}
= \frac{\mu_a}{2 - \mu_{x_i}}.
\]
Also set $\mu^i_{a_{2i-1}} = \frac{\mu_{x_i}}{2 - \mu_{x_i}}$. Each $\mu^i$ is well defined and satisfies
\[
\sum_{a \in \mathcal{A} \cup \mathcal{A}^{-1} \setminus \{a_{2i-1}^{-1}\}} \mu^i_a =  \frac{\mu_{x_i}}{2 - \mu_{x_i}}+ 2\sum_{a \in \mathcal{A} \setminus \{x_i\}}   \frac{\mu_a}{2 - \mu_{x_i}} =\frac{\mu_{x_i}}{2 - \mu_{x_i}}+ 2\cdot \frac{1-\mu_{x_i}}{2 - \mu_{x_i}}= 1.
\]

Applying the argument used in the proof of Theorem~\ref{Theorem: Kraft like inequality with coverings}, together with Lemma~\ref{countable main result - sums measure}, to the measures $\{\mu^i\}_{i \ge 1}$ and the subtree determined by the fixed strategy of Player~$1$, yields the desired conclusion.
\end{proof}
\begin{remark}
For simplicity, the probability measures assigned at each \say{level} of the Schreier graph were chosen to be symmetric, assigning equal mass to the singleton sets $\{a\}$ and $\{a^{-1}\}$. This symmetry assumption is not necessary and the measures could be modified freely. However, doing so would substantially complicate the resulting expressions.
\end{remark}

\begin{example}
Let $\mathcal{A} = \mathbb{N}_{>0} = \{1,2,\dots\}$, and let $\mu$ be a probability measure on $\mathcal{A}$ defined by
\[
\mu(\{n\}) = a_n, \qquad n \in \mathbb{N}_{>0},
\]
where $(a_n)_{n \ge 1}$ is a sequence of non-negative real numbers satisfying
\[
\sum_{n=1}^{\infty} a_n = 1.
\]
We illustrate the results with the choice $a_n = 2^{-n}$. For every maximal prefix code $C \subseteq \mathcal{A}^{<\mathbb{N}}$,
\[
\sum_{c = \langle c_1, \ldots, c_n \rangle \in C}
2^{-\sum_{i=1}^{n} c_i}
= 1.
\]
Moreover, for every sequence $x = (x_i)_{i \in \mathbb{N}} \in \mathcal{A}^{\mathbb{N}}$,
\[
\sum_{c = \langle c_1, \ldots, c_n \rangle \in C}
2^{\sum_{i=1}^{n} (x_i - c_i + 1)\,
\mathbf{1}_{\{x_i \neq c_i\}}}
\prod_{i=1}^{n} \frac{1}{2^{x_i+1} - 1}
= 1.
\]
\end{example}

\begin{example}
Let $\mathcal{A}$ be a finite alphabet, and define the uniform probability measure
\[
\mu_a = \frac{1}{|\mathcal{A}|}, \qquad a \in \mathcal{A}.
\]
In this case, the results of the present section reduce to the main results established in Section~\ref{section: perliminaries open sets}.
\end{example}


\begin{thebibliography}{99}

\bibitem[1]{GaleStewartOpenDET} D. Gale and F.M. Stewart (1953) \textit{Infinite Games with Perfect Information}, Contributions to the Theory of Games, vol. II, pp. 245-266.

\bibitem[2]{MartinCovering}
D.~A.~Martin (1985),
\textit{A purely inductive proof of Borel determinacy},
Proceedings of Symposia in Pure Mathematics, vol.~42, pp.~450--455.

\bibitem[3]{Solan}
E.~Solan (2025),
\textit{Borel Games},
Routledge Taylor \& Francis Group.

\bibitem[4]{KAPOVICH2002608}
I.~Kapovich and A.~Myasnikov (2002),
\textit{Stallings Foldings and Subgroups of Free Groups},
Journal of Algebra, vol.~248, no.~2, pp.~608--668.
\url{https://www.sciencedirect.com/science/article/pii/S0021869301990337}

\bibitem[5]{DETwinnerHDIM}
I.~Bellaïche and A.~Rosenzweig (2026),
\textit{Determining the Winner in Alternating-Move Games},
arXiv preprint arXiv:2601.08359.
\url{https://arxiv.org/abs/2601.08359}


\bibitem[6]{nielsen21}
J.~Nielsen (1921),
\textit{Om regning med ikke-kommutative faktorer og dens anvendelse i gruppeteorien. ({\"U}ber das {Rechnen} mit nicht-vertauschbaren {Faktoren} und dessen {Anwendung} in der {Gruppentheorie}.)},
Matematisk Tidsskrift, B, vol.~1921, pp.~78--94.


\bibitem[7]{Stallingfold}
J.~R.~Stallings (1991),
\textit{Foldings of G-trees “Arboreal Group Theory” (Berkeley, CA, 1988)},
Springer, New York, vol.~19, pp.~355--368.


\bibitem[8]{maxprefixcodeiswinning}
L.~Staiger (2007),
\textit{On Maximal Prefix Codes},
Bulletin of the EATCS, vol.~91, pp.~205--207.

\bibitem[9]{Schreier27}
O.~Schreier (1921),
\textit{Die Untergruppen der freien Gruppen},
Abhandlungen aus dem Mathematischen Seminar der Universit\"at Hamburg, vol.~5, pp.~161--183.
\url{https://doi.org/10.1007/BF02952517}


\bibitem[10]{GRIGORCHUK20121340}
R.~Grigorchuk, V.~A.~Kaimanovich, and T.~Nagnibeda (2012),
\textit{Ergodic properties of boundary actions and the Nielsen–Schreier theory},
Advances in Mathematics, vol.~230, no.~3, pp.~1340--1380.
\url{https://www.sciencedirect.com/science/article/pii/S0001870812001090}

\bibitem[11]{DasFishmanHousdorfGame} T.~Das, L.~Fishman, D.~Simmons, and M. Urb\'{a}nski (2024) \textit{A variational principle in the parametric geometry of numbers}, Advances in Mathematics, vol. 437, 109435, \url{https://doi.org/10.1016/j.aim.2023.109435}


\bibitem[12]{FiniteIndexProperty}
V.~Berthé, C.~De~Felice, F.~Dolce, J.~Leroy, D.~Perrin, C.~Reutenauer, and G.~Rindone (2015),
\textit{The finite index basis property},
Journal of Pure and Applied Algebra, vol.~219, no.~7, pp.~2521--2537.
\url{https://www.sciencedirect.com/science/article/pii/S0022404914002527}

\end{thebibliography}
\end{document}